\tikzset{bul/.style={circle,draw,fill,inner sep=1.2pt}}
\tikzset{wbul/.style={circle,draw=white,fill=white,inner sep=1.2pt}}
\tikzset{tbul/.style={circle,fill,inner sep=0.8pt}}
\tikzset{rbul/.style={circle,fill=red,draw=red,opacity=0.5,inner sep=1pt}}
\tikzset{bbul/.style={circle,fill=blue,draw=blue,inner sep=1.5pt}}
\tikzset{tv/.style={rectangle,draw=black,fill=white,inner sep=2pt}}
\tikzset{anti/.style={rectangle,fill=black, sloped,inner xsep=0.5pt, inner ysep=2pt}}
\tikzset{pok/.style={rectangle,fill=black, draw, sloped,inner xsep=0pt, inner ysep=1.2pt}}
\tikzset{obul/.style={circle,fill=orange,draw=orange,opacity=0.5,inner sep=1pt}}
\tikzset{->-/.style={decoration={
  markings,
  mark=at position #1 with {\arrow{>}}},postaction={decorate}}}
\theoremstyle{plain}
  \newtheorem{thm}{Theorem}[section]
  \newtheorem{defn}[thm]{Definition}
  \newtheorem{thd}[thm]{Theorem/Definition}
  \newtheorem*{defn*}{Definition}
  \newtheorem{prop}[thm]{Proposition}
  \newtheorem{cor}[thm]{Corollary}
  \newtheorem{lem}[thm]{Lemma}
\theoremstyle{definition}
  \newtheorem{example}[thm]{Example}
  \newtheorem{rem}[thm]{Remark}
\newcommand{\mf}{\mathfrak}
\newcommand{\mc}{\mathcal}
\newcommand{\ms}{\mathsf}
\newcommand{\on}{\operatorname}
\newcommand{\g}{\mathfrak{g}}
\newcommand{\dd}{\mathfrak{d}}
\newcommand{\h}{\mathfrak{h}}
\newcommand{\p}{\mathfrak{p}}
\newcommand{\C}{\mathcal C}
\newcommand{\Set}{\mathsf{Set}}
\newcommand{\Hom}{\operatorname{Hom}}
\newcommand{\la}{\langle}
\newcommand{\ra}{\rangle}
\newcommand{\R}{\mathbb{R}}
\newcommand{\Q}{\mathbb{Q}}
\newcommand{\id}{\on{id}}
\newcommand{\uln}[1]{\underline{\smash{#1}}}
\title{Quantization of Poisson Hopf algebras}
\author{Ján Pulmann}
\author{Pavol \v{S}evera}
\address{Section of Mathematics, Universit\'{e} de Gen\`{e}ve, Geneva, Switzerland}
\email{jan.pulmann@unige.ch}
\email{pavol.severa@gmail.com}
\thanks{Supported by the NCCR SwissMAP of the Swiss National Science Foundation.}
\begin{document}
\maketitle

\begin{abstract}
We describe a method for quantization of Poisson Hopf algebras in  $\mathbb Q$-linear symmetric monoidal categories. It is compatible with tensor products and can also be used to produce braided Hopf algebras. The main idea comes from the fact that nerves of groups are symmetric simplicial sets. Nerves of Hopf algebras then turn out to be braided rather than symmetric and nerves of Poisson Hopf algebras to be infinitesimally braided. The problem is thus solved via the standard machinery of Drinfeld associators. 
\end{abstract}

\tableofcontents

\section{Introduction}

One of the most natural deformation quantization problems is quantization of Poisson Hopf algebras
\cite{drqg}, i.e.\ of commutative Hopf algebras with a compatible Poisson bracket. Given a Poisson Hopf algebra with product $m_0$, coproduct $\Delta_0$, antipode $S_0$, Poisson bracket $p$, unit $\eta$ and counit $\epsilon$, the problem is to deform $m_0$, $\Delta_0$, and $S_0$ to
$$m_{}=\sum_{n=0}^\infty \hbar^n m_n\qquad \Delta_{}=\sum_{n=0}^\infty \hbar^n \Delta_n\qquad S_{}=\sum_{n=0}^\infty \hbar^n S_n$$
so that the result is still a Hopf algebra and so that
$$m_{}-m_{}^\mathit{op}=\hbar\,p+O(\hbar^2).$$

The deformation should be given by universal formulas with rational coefficients, i.e.\ as a morphism of props
$$\ms{Hopf}\to\ms{PoissHopf}(\Q)_\hbar,$$
 and thus usable for quantization of Poisson Hopf algebras in arbitrary $\Q$-linear symmetric monoidal categories and functorial in a rather strong sense. Furthermore, it is natural to demand the quantization to be compatible with tensor products of Hopf algebras; equivalently it means that the morphism of props is compatible with a suitable cocommutative coalgebra enrichment.

Quantization of Poisson Hopf algebras includes, in particular, quantization of Lie bialgebras, which was solved in the seminal work of Etingof and Kazhdan \cite{EK} (other approaches include \cite{Enr, Tam, ja}). Nonetheless, the problem of quantization of Poisson Hopf algebras is stronger and appears more natural (in special cases of Poisson Hopf algebras a quantization of Lie bialgebras is sufficient; see \cite{EK2} for the case of Poisson algebraic groups).
 
We solve this problem using nerves of Hopf algebras. The nerve of a group is not just a simplicial set, but a symmetric simplicial set. A similar statement is true for nerves of commutative Hopf algebras. If we replace the symmetric structure with a braided one, we get nerves of (possibly braided) Hopf algebras. An infinitesimally braided structure corresponds to nerves of Poisson Hopf algebras, and their quantization can thus be obtained by Drinfeld associators (which are machines producing braided structures out of infinitesimally braided ones).

The natural setup for our method is slightly more general: quantization of Poisson Hopf algebras in infinitesimally braided categories to braided Hopf algebras. At the end of the paper we also describe the outcome of our method when applied to quantization of suitable Poisson Hopf algebroids.

\section{Preliminaries}

\subsection{Algebras, coalgebras, Hopf algebras}

An \emph{algebra} (or a \emph{monoid}) in a monoidal category $\C$ is an object $A\in\C$ equipped with an associative product $m\colon A\otimes A\to A$ and with a unit $\eta\colon 1_\C\to A$ of $m$. Similarly a \emph{coalgebra} $C\in\C$ is equipped with a coassociative coproduct $\Delta\colon C\to C\otimes C$ and with a counit $\epsilon\colon C\to 1_\C$.

An algebra $A$ in a braided monoidal category (BMC) $\C$ is \emph{commutative} if the product $m\colon A\otimes A\to A$ satisfies $m=m\circ\beta_{A,A}$, where $\beta_{X,Y}:X\otimes Y\to Y\otimes X$ denotes the braiding, i.e.
$$
\begin{tikzpicture}[baseline=0.5cm,xscale=0.8,yscale=0.5]
\node[above](E) at (0,2.5){$A$};
\coordinate(P) at (0,2);
\node[below](f) at (-0.5,0){$A$};
\node[below](g) at (0.5,0){$A$};
\draw[thick] (g)to[out=90,in=-80](P);
\draw[thick] (f)to[out=90,in=-100](P);
\draw[thick] (P)--(E);
\node at (1,1.2){=};
\begin{scope}[xshift=2cm]
\node[above](E) at (0,2.5){$A$};
\coordinate(P) at (0,2);
\coordinate(L) at (0.35,1.2);
\coordinate(R) at (-0.35,1.2);
\node[below](f) at (-0.5,0){$A$};
\node[below](g) at (0.5,0){$A$};
\draw[thick] (g)to[out=90,in=-90](R);
\draw[line width=1ex,white](f)to[out=90,in=-90](L);
\draw[thick] (f)to[out=90,in=-90](L);
\draw[thick](L)to[out=90,in=-80](P) (R)to[out=90,in=-100](P);
\draw[thick] (P)--(E);
\end{scope}
\end{tikzpicture}
$$
(with morphisms going upwards and braiding depicted by
$\begin{tikzpicture}[scale=0.25,baseline=0.7]
\draw[thick] (1,0)--(0,1);
\draw[line width=1ex,white] (0,0)--(1,1);
\draw[thick] (0,0)--(1,1);
\end{tikzpicture}$).

If $A_1,A_2$ are algebras  in a BMC $\C$, then so is $A_1\otimes A_2$ with the product \cite[Lem.~2.1]{Maj}
$$\begin{tikzpicture}[xscale=0.7,yscale=0.5,baseline=0.5cm]
\coordinate (diff) at (0.75,0);
\coordinate (dy) at (0,0.5);
\coordinate(A1) at (0,0);
\coordinate(B1) at ($(A1)+(diff)$);
\coordinate(A2) at (2,0);
\coordinate(B2) at ($(A2)+(diff)$);
\coordinate(A3) at (1,2.5);
\coordinate(B3) at ($(A3)+(diff)$);
\coordinate(ep) at (0.3,0);
\coordinate(A) at ($(A3)-(dy)$);
\coordinate(B) at ($(B3)-(dy)$);

\node[below] at (A1){$A_1$};
\node[below] at (B1){$A_2$};
\node[below] at (A2){$A_1$};
\node[below] at (B2){$A_2$};
\node[above] at (A3){$A_1$};
\node[above] at (B3){$A_2$};

\draw[thick] (A2)to[out=90,in=-80](A);
\draw[line width=1ex,white] (B1)to[out=90,in=-100](B);
\draw[thick] (B1)to[out=90,in=-100](B);
\draw[thick] (A1)to[out=90,in=-100](A);
\draw[thick] (B2)to[out=90,in=-80](B);
\draw[thick] (B)--(B3);
\draw[thick](A)--(A3);
\end{tikzpicture}
$$
and with the unit 
$1_\C\cong1_\C\otimes1_\C\xrightarrow{\eta_{A_1}\otimes\eta_{A_2}}A_1\otimes A_2.$
In this way algebras in $\C$ form a monoidal category.
The tensor product of two commutative algebras in a symmetric monoidal category (SMC) is commutative, but in a BMC it is, in general, not true.

A coalgebra $H$ in the monoidal category of algebras in a BMC $\C$ is a (braided) \emph{bialgebra}.
 Explicitly this means that $H$ is both an algebra and a coalgebra in $\C$ and that we have the identities
$$
\begin{tikzpicture}[scale=0.5,baseline=0.7cm]
\coordinate(in1) at (-1,0);
\coordinate(in2) at (1,0);
\coordinate(m1) at (0,1);
\coordinate(m2) at (0,2);
\coordinate(out1) at (-1,3);
\coordinate(out2) at (1,3);
\draw[thick](in1)--(m1)--(in2) (m1)--(m2) (out1)--(m2)--(out2);
\end{tikzpicture}
=
\begin{tikzpicture}[scale=0.5,baseline=0.7cm]
\coordinate(in1) at (-1,0);
\coordinate(in2) at (1,0);
\coordinate(m1L) at (-0.7,0.8);
\coordinate(m2L) at (-0.7,2.2);
\coordinate(m1R) at (0.7,0.8);
\coordinate(m2R) at (0.7,2.2);
\coordinate(out1) at (-1,3);
\coordinate(out2) at (1,3);
\draw[thick](m1R)--(m2L);
\draw[line width=1ex,white] (m1L)--(m2R);
\draw[thick](m1L)--(m2R);
\draw[thick](in1)--(m1L)--(m2L)--(out1) (in2)--(m1R)--(m2R)--(out2);
\end{tikzpicture}
$$
and 
$$
\epsilon\circ\eta=\on{id}_{1_\C},\quad \Delta\circ(\eta\otimes\eta)=\eta,\quad \epsilon\circ m=\epsilon\otimes\epsilon
$$
(where we tacitly identify $1_\C\otimes 1_\C$ with $1_\C$ via the canonical isomorphism).

A (braided) \emph{Hopf algebra} in a BMC $\C$ is a bialgebra $H$ with an additional morphism $S\colon H\to H$ (the \emph{antipode}) such that 
$$
m\circ(\on{id}\otimes S)\circ\Delta=m\circ(S\otimes\on{id})\circ\Delta=\eta\circ\epsilon.
$$

%We shall always demand the antipode $S$ to be invertible.

\subsection{Lax monoidal functors}

A \emph{lax monoidal functor} is a functor $F\colon \C_1\to\C_2$ between two monoidal categories, equipped with natural transformations (called \emph{coherence morphisms})
$$c^F_{X,Y}\colon F(X)\otimes F(Y)\to F(X\otimes Y),\quad c^F_1\colon 1_{\C_2}\to F(1_{\C_1})$$
making $F$ compatible with the associativity and the unit constraints of $\C_1$ and $\C_2$ (see e.g.\ \cite[Sec.~XI.2]{McL} for details).
Coherence morphisms can be graphically depicted %, with morphisms going upwards, 
as
$$c^F_{X,Y}\ =\!
\begin{tikzpicture}[yscale=0.7, baseline=0.7cm]
\coordinate (A1) at (-1,0);
\coordinate (B1) at (-0.5,2);
\coordinate (A2) at (1,0);
\coordinate (B2) at (0.5,2);
\coordinate (C1) at (-0.3,0);
\coordinate (C2) at (0.3,0);
\coordinate (X) at (-0.65,0);
\coordinate (Y) at (0.65,0);

\fill[black!10] (A1) to[out=90,in=-90] (B1)--(B2) to[out=-90,in=90](A2) -- (C2)..controls +(0,0.7) and +(0,0.7)..(C1);

\draw[black!30,thick] (A1) to[out=90,in=-90] (B1) (B2) to[out=-90,in=90](A2)  (C2)..controls +(0,0.7) and +(0,0.7)..(C1);

\draw[thick] (X) to[out=90,in=-90] (-0.15,2);
\draw[thick] (Y) to[out=90,in=-90] (0.15,2);

\node[below] at (X) {$F(X)$};
\node[below] at (Y) {$F(Y)$};
\node[below] at (0,0){${\otimes}\vphantom{(}$};
\node[above] at (0,2) {$F(X\otimes Y)$};
\end{tikzpicture}\qquad
c^F_1\ =
\begin{tikzpicture}[yscale=0.7, baseline=0.7cm]
\fill[black!10] (4,2)..controls +(0,-2.5) and +(0,-2.5)..(5,2);
\draw[black!30,thick] (4,2)..controls +(0,-2.5) and +(0,-2.5)..(5,2);
\node[below] at (4.5,0) {$1_{\C_2}$};
\node[above] at (4.5,2) {$F(1_{\C_1})$};
\end{tikzpicture}
$$

A \emph{strong monoidal functor} is a lax monoidal functor whose coherence morphisms are isomorphisms. A \emph{strict monoidal functor} is a strong monoidal functor whose coherence morphisms are identities.

A natural transformation $\alpha_X:F(X)\to G(X)$ between two lax monoidal functors $F,G\colon\C_1\to\C_2$ is \emph{monoidal} if 
$$\alpha_{X\otimes Y}\circ c^F_{X,Y}= c^G_{X,Y}\circ(\alpha_X\otimes\alpha_Y) \quad\text{and}\quad \alpha_{1_{\C_2}}\circ c^F_1=c^G_1.$$
Lax monoidal functors form a category with monoidal natural transformations as morphisms.

A lax monoidal functor $F\colon\C_1\to\C_2$ between two BMCs is \emph{braided} if $$F(\beta^{\C_1}_{X,Y})\circ c^F_{X,Y}=c^F_{Y,X}\circ\beta^{\C_2}_{F(X),F(Y)}.$$

Lax monoidal functors send algebras to algebras:  If $A\in\C_1$ is an algebra then $F(A)\in\C_2$ is an algebra with the product  %$F(X)\otimes F(X)\to F(X\otimes X)\to F(X)$, i.e.
$$F(m)\circ c^F_{A,A}\ =\!
\begin{tikzpicture}[yscale=0.7,baseline=0.7cm]
\coordinate (A1) at (-1,0);
\coordinate (B1) at (-0.35,2);
\coordinate (A2) at (1,0);
\coordinate (B2) at (0.35,2);
\coordinate (C1) at (-0.3,0);
\coordinate (C2) at (0.3,0);
\coordinate (X) at (-0.65,0);
\coordinate (Y) at (0.65,0);

\fill[black!10] (A1) to[out=90,in=-90] (B1)--(B2) to[out=-90,in=90](A2) -- (C2)..controls +(0,0.7) and +(0,0.7)..(C1);

\draw[black!30,thick] (A1) to[out=90,in=-90] (B1) (B2) to[out=-90,in=90](A2)  (C2)..controls +(0,0.7) and +(0,0.7)..(C1);

\draw[thick] (X) to[out=90,in=-100] (0,1.6);
\draw[thick] (Y) to[out=90,in=-80] (0,1.6)--(0,2);

\node[below] at (X) {$F(A)$};
\node[below] at (Y) {$F(A)$};
\node[below] at (0,0){${\otimes}\vphantom{(}$};
\node[above] at (0,2) {$F(A)$};
\end{tikzpicture}
$$
and with the unit $F(\eta)\circ c^F_1$.
% (More conceptually, a monoid in $\C_1$ is equivalent to a lax monoidal functor $1\to \C_1$ from the trivial monoidal category $1$, and composition of lax monoidal functors is lax monoidal.) 
Braided lax monoidal functors send commutative algebras to commutative algebras.

\subsection{Props}
A \emph{prop} is a  strict SMC whose objects are symbols $\bullet^n$, $n\geq 0$,  with the tensor product of objects $\bullet^m\otimes\bullet^n=\bullet^{m+n}$ and with the unit object $1=\bullet^0$. A \emph{braided prop} is a  strict BMC with the same property. %Every prop is automatically also a braided prop.

A morphism of (braided or ordinary) props $F\colon \mc P_1\to\mc P_2$ is a strict braided  monoidal functor such that $F(\bullet)=\bullet$.

If $\mc P$ is an ordinary/braided prop, a \emph{$\mc P$-algebra} in a strict SMC/BMC $\C$ is an object $A\in\C$ together with a strict braided monoidal functor $F\colon\mc P\to\C$ such that $F(\bullet)=A$. 

\begin{rem}\label{rem:parfirst}
	If $\C$ is not strict, $F$ is rather a strong braided monoidal functor,  unique up to  monoidal isomorphisms which are the identity at the object $\bullet$. We can specify a particular $F$ by demanding $F(\bullet^n)=A^{\otimes n}$ for a chosen parenthesization of the tensor power $A^{\otimes n}$, and demanding $c^F_1=\id$ and $c^F_{\bullet^m,\bullet^n}\colon A^{\otimes m}\otimes A^{\otimes n}\to A^{\otimes (m+n)} $ to be the corresponding composition of the associativity (or unit) constraints of $\C$.
\end{rem}

\begin{example}\label{ex:Com}%[Prop of commutative algebras]
Let $\ms{Com}$ be the prop of commutative algebras. It is generated by morphisms $\uln m\colon {\bullet\bullet} \to\bullet$ and $\uln \eta\colon 1\to\bullet$ modulo the relations saying that $\uln m$ is commutative and associative and that $\uln\eta$ is a unit for $\uln m$.
A $\ms{Com}$-algebra in a SMC is a commutative algebra.

A morphism $\bullet^m\to\bullet^n$ in $\ms{Com}$ can be represented graphically as
$$
\begin{tikzpicture}[xscale=0.6,yscale=0.8]
\node[bul](a1) at (0,0){};
\node[bul](a2) at (1,0){};
\node[bul](a3) at (2,0){};
\node[bul](a4) at (3,0){};
\node[bul](a5) at (4,0){};
\node at (5,0) [right] {$m=5$};

\node[bul](b1) at (0.5,2){};
\node[bul](b2) at (1.5,2){};
\node[bul](b3) at (2.5,2){};
\node[bul](b4) at (3.5,2){};
\node at (5,2) [right] {$n=4$};

\draw(a1)to[out=90,in=-90](b2) (a2)to[out=90,in=-90](b1) (a3)to[out=90,in=-90](b2) (a4)to[out=90,in=-90](b4) (a5)to[out=90,in=-90](b2);
\end{tikzpicture}
$$
with joining lines corresponding to the product ${\bullet\bullet}\to\bullet$ and missing lines to the unit $1\to\bullet$,
i.e.\ as a map $\{1,\dots,m\}\to\{1,\dots,n\}$. Composition of morphisms is then equal to the composition of these maps.
\end{example}

\begin{example}%[Braided prop of commutative algebras] 

Let $\ms{BrCom}$ be the braided prop of commutative algebras. It is again generated by two morphisms $\uln m\colon{\bullet\bullet}\to\bullet$ (the commutative product) and $\uln\eta\colon1\to\bullet$ (the unit). A $\ms{BrCom}$-algebra in a BMC is a commutative algebra.

 The morphisms $\bullet^m\to\bullet^n$ in $\ms{BrCom}$ are braids with $m$ strands, non-bijectively attached at the top: 
$$
\begin{tikzpicture}[xscale=0.6,yscale=0.8]
\node[bul](a1) at (0,0){};
\node[bul](a2) at (1,0){};
\node[bul](a3) at (2,0){};
\node[bul](a4) at (3,0){};
\node[bul](a5) at (4,0){};
\node at (5,0) [right] {$m=5$};

\node[bul](b1) at (0.5,2){};
\node[bul](b2) at (1.5,2){};
\node[bul](b3) at (2.5,2){};
\node[bul](b4) at (3.5,2){};
\node at (5,2) [right] {$n=4$};

\draw (a2)to[out=90,in=-90](b1);
\draw[line width=1ex,white](a1)to[out=90,in=-90](b2);
\draw(a1)to[out=90,in=-90](b2);
\draw (a3)to[out=90,in=-90](b2);
\draw (a5)to[out=90,in=-90](b2);
\draw[line width=1ex,white] (a4)to[out=90,in=-90](b4);
\draw (a4)to[out=90,in=-90](b4);
\end{tikzpicture}
$$
%We have a morphism of braided props $\ms{BrCom}\to\ms{Com}$ obtained by forgetting the braid and keeping only the resulting map.

\end{example}

\begin{rem}
	The endomorphism monoids $\ms{BrCom}(\bullet^n, \bullet^n)$ appeared in \cite{Lav}, under the name of $n$-\textit{vines}.
	
	In \cite{FieLo}, related categories $S_{*}$ and $B_*$ were studied, as examples of \textit{crossed simplicial groups}. The  difference is that our $\bullet$ is a \textit{commutative} algebra, i.e. the unique factorization property \cite[Def.~1.1(d)]{FieLo} does not hold for $\ms{Com}$ and $\ms{BrCom}$.
\end{rem}

%
%
%\begin{example}%[Braided prop of Hopf algebras]
%Let $\ms{BrHopf}$ be the braided prop of Hopf algebras with an invertible antipode. This braided prop is generated by morphisms $\uln m\colon {\bullet\bullet}\to\bullet$, $\uln \Delta\colon \bullet\to{\bullet\bullet}$, $\uln \eta\colon 1\to\bullet$, $\uln \epsilon\colon \bullet\to1$, and $\uln S,\uln S^{-1}\colon \bullet\to\bullet$, modulo the defining relations of Hopf algebras and  $\uln S\circ \uln S^{-1}=\uln S^{-1}\circ\uln S=\on{id}$.
%A $\ms{BrHopf}$-algebra in a  BMC is a Hopf algebra with an invertible antipode.
%
%\end{example}

\section{Nerves of  Hopf algebras}

\subsection{Symmetric simplicial spaces}

An (augmented) symmetric simplicial object in a category $\mc S$ is a functor 
$$\ms{Com}^\text{op}\to\mc S.$$
 Informally speaking, we want to study the case when $\mc S$ is a ``category of spaces''. We take the dual point of view and replace ``spaces'' with commutative algebras. Hence, if $\C$ is a symmetric monoidal category and $\on{ComAlg}(\C)$ the category of commutative algebras in $\C$, our object of interest is the category of functors 
$$\ms{Com}\to\on{ComAlg}(\C),$$
i.e.\ (augmented) symmetric cosimplicial commutative algebras in $\C$.

Let us observe that every object $\bullet^n$ of $\ms{Com}$ is a commutative algebra, being  a tensor power of the commutative algebra $\bullet$, and also that every morphism in $\ms{Com}$ is a morphism of algebras. This gives us the following result.

\begin{prop}\label{prop:scosca}
Suppose $\C$ is a symmetric monoidal category. The category of functors
$$\ms{Com}\to\on{ComAlg}(\C)$$
is isomorphic to the category of symmetric lax monoidal functors
$$\ms{Com}\to\C$$
with monoidal natural transformations as morphisms.

Namely, if $F\colon \ms{Com}\to\C$ is symmetric lax monoidal then $F(\bullet^n)$ is a commutative algebra in $\C$ as $\bullet^n$ is a commutative algebra  in $\ms{Com}$, and thus $F$ becomes a functor $\ms{Com}\to\on{ComAlg}(\C)$.
\end{prop}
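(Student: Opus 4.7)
The plan is to construct an inverse to the assignment $F\mapsto\tilde F$ described in the statement and then check the correspondence on morphisms. First I would spell out the two observations that make the forward direction work: in $\ms{Com}$ each $\bullet^n=\bullet^{\otimes n}$ is a commutative algebra (tensor power of the universal commutative algebra $\bullet$), with multiplication $\mu_n\colon\bullet^{2n}\to\bullet^n$ and unit $\eta_n\colon 1\to\bullet^n$ built from $\uln m$ and $\uln \eta$; and every morphism $\bullet^m\to\bullet^n$ in $\ms{Com}$ is an algebra morphism, which is checked directly from the description of morphisms as set-maps $\{1,\dots,m\}\to\{1,\dots,n\}$ using commutativity. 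For symmetric lax monoidal $F$, the composition $F(\mu_n)\circ c^F_{\bullet^n,\bullet^n}$ then makes $F(\bullet^n)$ into a commutative algebra in $\C$ (commutativity from the symmetric property of $F$) and $F(f)$ is an algebra morphism for every morphism $f$ in $\ms{Com}$, yielding $\tilde F\colon\ms{Com}\to\on{ComAlg}(\C)$.

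For the inverse direction, I would send $G\colon\ms{Com}\to\on{ComAlg}(\C)$ to the composition $\bar G$ of $G$ with the forgetful functor $\on{ComAlg}(\C)\to\C$, equipped with the coherences
$$c^{\bar G}_{\bullet^m,\bullet^n}=m_{G(\bullet^{m+n})}\circ\bigl(G(\iota_1)\otimes G(\iota_2)\bigr),\qquad c^{\bar G}_1=\eta_{G(\bullet^0)},$$
where $\iota_1\colon\bullet^m\hookrightarrow\bullet^{m+n}$ and $\iota_2\colon\bullet^n\hookrightarrow\bullet^{m+n}$ are the canonical inclusions in $\ms{Com}$ (these are algebra maps, so $G(\iota_i)$ are algebra maps in $\C$). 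Naturality, associativity, and unitality of the coherences are direct diagram chases using functoriality of $G$ together with the algebra axioms for $G(\bullet^{m+n})$; the symmetry of $\bar G$ amounts to the identity $m\circ\sigma=m$ for the commutative algebra $G(\bullet^{m+n})$, after rewriting $G(\beta)\circ\iota_i$ as an inclusion of the opposite type.

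Finally I would verify that the two constructions are mutually inverse. The key identity in $\ms{Com}$ is $\mu_{m+n}\circ(\iota_1\otimes\iota_2)=\id_{\bullet^{m+n}}$ (a direct check on the underlying finite sets: insert units on the complementary positions, then multiply componentwise). Starting from $F$, naturality of $c^F$ at $\iota_1\otimes\iota_2$ gives
$$F(\mu_{m+n})\circ c^F_{\bullet^{m+n},\bullet^{m+n}}\circ(F(\iota_1)\otimes F(\iota_2))=F\bigl(\mu_{m+n}\circ(\iota_1\otimes\iota_2)\bigr)\circ c^F_{\bullet^m,\bullet^n}=c^F_{\bullet^m,\bullet^n},$$
so the reconstructed coherence agrees with $c^F$; the unit is handled similarly. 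Starting from $G$, the algebra structure rebuilt on $\tilde{\bar G}(\bullet^n)=G(\bullet^n)$ from the coherences of $\bar G$ matches the original, since both are expressed via $G$ applied to generating morphisms in $\ms{Com}$. The morphism correspondence is immediate: a natural transformation between functors to $\on{ComAlg}(\C)$ consists of algebra morphisms, which translates directly into the monoidal condition on natural transformations of the associated lax monoidal functors. The main obstacle is purely bookkeeping of coherence diagrams; nothing beyond the internal commutative-algebra structure of $\ms{Com}$ is needed.
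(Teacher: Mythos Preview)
Your proposal is correct and follows essentially the same approach as the paper: the inverse construction you give, $c^{\bar G}_{\bullet^m,\bullet^n}=m_{G(\bullet^{m+n})}\circ(G(\iota_1)\otimes G(\iota_2))$, is exactly the paper's coherence morphism $F(X)\otimes F(Y)\to F(X\otimes Y)\otimes F(X\otimes Y)\to F(X\otimes Y)$ specialized to $X=\bullet^m$, $Y=\bullet^n$. You supply more detail than the paper does---in particular the identity $\mu_{m+n}\circ(\iota_1\otimes\iota_2)=\id$ and the check on morphisms---but the underlying idea is identical.
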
 
\begin{proof}
To get the isomorphism in the opposite direction, if $F\colon \ms{Com}\to\on{ComAlg}(\C)$ is a functor then we can define a symmetric lax monoidal structure on $F$  as follows: $c^F_{X,Y}$ is the composition,
$$F(X)\otimes F(Y)\to F(X\otimes Y)\otimes F(X\otimes Y)\to F(X\otimes Y)$$
where the first arrow comes  from 
$$X=X\otimes 1\xrightarrow{\on{id}_X\otimes\eta_Y} X\otimes Y\quad\text{and}\quad Y=1\otimes Y\xrightarrow{\eta_X\otimes\on{id}_Y} X\otimes Y $$
and the second arrow is the product in $F(X\otimes Y)$,
and $c^F_1$ is the unit $1_\C\to F(1)$ of the commutative algebra $F(1)$.
\end{proof}

\subsection{Nerves of groups and  groupoids}

If $S$ is a set, let $\mathit{Pair}_S$ denote the pair groupoid of $S$, i.e.\ the groupoid with the set of objects $S$ and with a unique morphism between any two objects.

The (symmetric augmented) \emph{nerve} of a groupoid $G$ is the functor 
$$N_G\colon \ms{Com}^\text{op}\to\Set$$
 given on objects by
$$N_G(\bullet^n)=\Hom(\mathit{Pair}_{\{1,2,\dots,n\}},G).$$
On morphisms it is given by seeing them as maps $\{1,2,\dots,m\}\to\{1,2,\dots,n\}$.

If $G$ is a group, this means
$$N_G(\bullet^n)=\bigl\{g\colon \{1,2,\dots,n\}^2\to G\mid g(i,i)=1 \text{ and } g(i,j)\,g(j,k)=g(i,k)\bigr\}$$
and we have a bijection 
\begin{equation}\label{groupNerveIso}
N_G(\bullet^n)\cong G^{n-1},\quad g\mapsto\bigl(g(i,i+1)\mid 1\leq i\leq n-1\bigr).
\end{equation}

We can recognize nerves of groups among all functors $N\colon\ms{Com}^\text{op}\to\ms{Set}$ using \eqref{groupNerveIso} as follows. Let us consider the morphisms $\phi_i\colon{\bullet\bullet}\to\bullet^n$
$$\phi_i=\ %
\begin{tikzpicture}[scale=0.5,baseline=0.2cm]
\draw (0,0) node[bul]{} (1,0) node[bul] {};
\draw (0,1) node[bul]{} node[above]{\scriptsize $\mathstrut i$} (1,1) node[bul] {} node[above]{\scriptsize $\mathstrut i+1$};
\draw (-2,1) node[bul] {} node[above] {\scriptsize $\mathstrut 1$} (-1,1) node {$\cdots$};
\draw (3,1) node[bul] {} node[above] {\scriptsize $\mathstrut n$} (2,1) node {$\cdots$};
\draw (0,0)--(0,1) (1,0)--(1,1);
\end{tikzpicture}
\qquad (1\leq i\leq n-1)
$$
and the resulting map 
\begin{equation}\label{recnerve}
\bigl(N(\phi_1),\dots,N(\phi_{n-1})\bigr)\colon N(\bullet^{n})\to N({\bullet\bullet})^{n-1}.
\end{equation}
 We then have the following elementary and well known result.
\begin{prop}\label{prop:grnerve}
The assignment $G\mapsto N_G$ is an equivalence of categories between the category of groups and the category of functors $N\colon\ms{Com}^\mathrm{op}\to\Set$ such that \eqref{recnerve} is a bijection for every $n\geq 2$ and such that $|N(1)|=|N(\bullet)|=1$.

The group corresponding to $N$ is $G=N({\bullet\bullet})$, the product is
$$N\Bigl(\,
\begin{tikzpicture}[xscale=0.35,yscale=0.6, baseline=0.2cm]
\node(a) [bul] at (0,0){};
\node(b) [bul]at (1,0){};
\node(1) [bul]at (-0.5,1){};
\node(2) [bul]at (0.5,1){};
\node(3) [bul]at (1.5,1){};
\draw (a)to[out=100,in=-80](1) (b)to[out=80,in=-100](3);
\end{tikzpicture}\,\Bigr)\colon G\times G\to G
$$
(where we identify $N(\bullet^3)$ with $G\times G$ via \eqref{recnerve})
 and the unit and the inverse are
$$
N\Bigl(\,
\begin{tikzpicture}[xscale=0.35,yscale=0.6, baseline=0.2cm]
\node(a) [bul] at (0,0){};
\node(b) [bul]at (1,0){};
\node(1) [bul]at (0.5,1){};
\draw (a) to[out=80, in=-90] (1) (b) to[out=100, in=-90] (1);
\end{tikzpicture}\,\Bigr)\quad\text{and}\quad
N\Bigl(\,
\begin{tikzpicture}[xscale=0.35,yscale=0.6, baseline=0.2cm]
\node(a) [bul] at (0,0){};
\node(b) [bul]at (1,0){};
\node(1) [bul]at (0,1){};
\node(2) [bul]at (1,1){};
\draw (a) to[out=75, in=-105] (2) (b) to[out=105, in=-75] (1);
\end{tikzpicture}\,\Bigr).
$$
\end{prop}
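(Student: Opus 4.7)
The plan is to construct a functor inverse to $G \mapsto N_G$ and to verify mutual inverseness. The first direction is easy: I verify that $N_G$ satisfies the two stated conditions. The pair groupoids $\mathit{Pair}_\emptyset$ and $\mathit{Pair}_{\{1\}}$ admit unique functors to any group $G$, so $|N_G(1)| = |N_G(\bullet)| = 1$. For the bijectivity of \eqref{recnerve}, unwinding the definition of $N_G(\phi_i)$ gives $N_G(\phi_i)(g) = g(i,i+1)$, so \eqref{recnerve} is precisely the bijection \eqref{groupNerveIso}.

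For the inverse direction, given $N$ satisfying the hypotheses, I set $G := N(\bullet^2)$ and use \eqref{recnerve} to identify $N(\bullet^n) \cong G^{n-1}$ for $n \geq 2$. I then define the product, unit, and inverse on $G$ by applying $N$ to the three morphisms displayed in the statement; these are morphisms in $\ms{Com}$ which by Example \ref{ex:Com} correspond to concrete maps of finite sets (e.g.\ the product comes from the map $\{1,2\} \to \{1,2,3\}$, $1 \mapsto 1$, $2 \mapsto 3$, the unit from $\{1,2\} \to \{1\}$, and the inverse from the transposition $\{1,2\} \to \{1,2\}$). Each group axiom then becomes an identity between composable morphisms in $\ms{Com}$, which via the dictionary of Example \ref{ex:Com} reduces to an identity between maps of finite sets and can be checked directly. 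For associativity, for instance, I compare two factorizations of the morphism $\bullet^2 \to \bullet^4$, $1 \mapsto 1$, $2 \mapsto 4$, through $\bullet^3$: both factorizations represent the same map of finite sets, so functoriality of $N$ together with the identifications $N(\bullet^k) \cong G^{k-1}$ yields $(ab)c = a(bc)$. The unit and inverse axioms are handled analogously.

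Finally, I check that the two constructions are mutually inverse. Starting from a group $G$, a direct computation using \eqref{groupNerveIso} shows that $N_G(\bullet^2) = G$ with the operations extracted via the displayed morphisms recovers the original structure. Starting from $N$, I use that $\ms{Com}$ is generated as a prop by $\uln m$ and $\uln\eta$ together with its SMC structure; since the values of $N$ on these generators are determined (via \eqref{recnerve}) by the group operations on $G = N(\bullet^2)$, and these are precisely the operations of $N_G$, I conclude $N \cong N_G$ by comparing values on generating morphisms. The main obstacle is the combinatorial bookkeeping involved in tracking the contravariance of $N$, matching morphisms in $\ms{Com}$ with maps of finite sets, and translating identities among those set maps into group-theoretic identities in $G$; the arguments are conceptually straightforward but somewhat lengthy to write out in full detail.
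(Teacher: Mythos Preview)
The paper does not actually prove this proposition: it introduces it as ``the following elementary and well known result'' and gives no argument, later remarking (after Proposition~\ref{prop:comHopf}) that the commutative Hopf algebra version has ``the same'' proof, whose details are left to the reader. Your sketch is the standard argument and is correct: verify that $N_G$ satisfies the stated conditions, build the candidate group on $N(\bullet\bullet)$ from the displayed $\ms{Com}$-morphisms, reduce each group axiom to an equality of maps of finite sets via functoriality of $N$ and the identification of $\Hom_{\ms{Com}}$ with $\ms{Set}$-maps (Example~\ref{ex:Com}), and check the two round-trips using that $\ms{Com}$ is generated by $\uln m$, $\uln\eta$ and the symmetric structure. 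There is nothing to compare against in the paper beyond this.
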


\subsection{Nerves of commutative Hopf algebras}
By analogy with groups one can define the nerve of a commutative Hopf algebra $H\in\C$ where $\C$ is a symmetric monoidal category (SMC). It is a functor
$$N_H\colon \ms{Com}\to\on{ComAlg}(\C)$$ or equivalently, in view of Proposition \ref{prop:scosca}, a symmetric lax monoidal functor
$$N_H\colon \ms{Com}\to\C.$$
On objects it is given by $N_H(\bullet^ n)=H^{\otimes(n-1)}$, $N_H(1)=1_\C$ (to simplify the notation we suppose that $\C$ is strict monoidal; if not, one needs to choose a parethesization of $H^{\otimes(n-1)}$ and use the associativity constraint of $\C$ accordingly).

The functor $N_H$ is defined on morphisms via a graphical algorithm, which takes a morphism $\phi\colon\bullet^m\to\bullet^n$ in $\mathsf{Com}$ and outputs a morphism $H^{\otimes (m-1)} \to H^{\otimes (n-1)}$ given in terms of the structure operations of the Hopf algebra:
$$
\begin{tikzpicture}[xscale=0.6,yscale=0.8,baseline=0.8cm]
\node[bul](a1) at (0,0){};
\node[bul](a2) at (1,0){};
\node[bul](a3) at (2,0){};
\node[bul](a4) at (3,0){};
\node[bul](a5) at (4,0){};

\node[bul](b1) at (0.5,2){};
\node[bul](b2) at (1.5,2){};
\node[bul](b3) at (2.5,2){};
\node[bul](b4) at (3.5,2){};

\draw(a1)to[out=90,in=-90](b2) (a2)to[out=90,in=-90](b1) (a3)to[out=90,in=-90](b2) (a4)to[out=90,in=-90](b4) (a5)to[out=90,in=-90](b2);
\end{tikzpicture}
\qquad
\xmapsto{N_H}
\qquad
\begin{tikzpicture}[xscale=0.6,yscale=0.8,baseline=0.8cm]
\node[bul,lightgray](a1) at (0,0){};
\node[bul,lightgray](a2) at (1,0){};
\node[bul,lightgray](a3) at (2,0){};
\node[bul,lightgray](a4) at (3,0){};
\node[bul,lightgray](a5) at (4,0){};

\node[bul,lightgray](b1) at (0.5,2){};
\node[bul,lightgray](b2) at (1.5,2){};
\node[bul,lightgray](b3) at (2.5,2){};
\node[bul,lightgray](b4) at (3.5,2){};

\coordinate(m1) at (1,1.5);
\coordinate(m2) at (2,1.5);
\coordinate(m3) at (3,1.5);
\coordinate(c1) at (2.5,0.5);
\coordinate(c2) at (3.5,0.5);

\draw[thick] (0.5,0)--node[anti]{}(0.5,0.5)to[out=90, in=-120](m1) (1.5,0)--(1.5,0.5)to[out=90, in=-60](m1) (m1)--(1,2)%
  (2.5,0)--(c1)to[out=120,in=-120] (m2)--(2,2) (c1)to[out=60,in=-120] (m3)--(3,2)%
  (3.5,0)--node[anti]{}(c2)to[out=120,in=-60] (m2) (c2)to[out=60,in=-60] (m3);
\end{tikzpicture}
\quad
\begin{tikzpicture}[xscale=0.6,yscale=0.8,baseline=0.8cm]
\node(1) at (0,0.05) {$\;\;H^{\otimes4}$};
\node(2) at (0,2) {$\;\;H^{\otimes3}$};
\draw[->] (1)--(2);
\end{tikzpicture}
$$
\begin{itemize}
\item put one $H$ between every two consecutive $\bullet$'s 
\item for every consecutive pair $\bullet\bullet$ in $\bullet^m$, if the order of the pair is reversed in the image, apply $S$ (depicted by 
\begin{tikzpicture}[scale=0.3, baseline=1.5pt]
\draw[thick](0,0)--node[anti]{} (0,1);
\end{tikzpicture})
to the corresponding $H$
\item if the distance of the two $\bullet$'s in the image is $k$, apply iterated $\Delta$ to get a morphism $H\to H^{\otimes k}$ (if $k=0$, apply $\epsilon$), and distribute the resulting $H$'s to the $k$ pairs of $\bullet$'s using the symmetry of $\mathcal C$
\item finally multiply $H$'s arriving between consecutive $\bullet$'s in $\bullet^n$. If there is no $H$ arriving to a pair of consecutive $\bullet$'s, use the unit of $H$.
\end{itemize}
The coherence morphisms  of $N_H$ (obtained from Proposition \ref{prop:scosca} and from the commutative algebra structure of $N_H(\bullet^n)$) are as follows: for $m,n>0$ the morphism $N_H(\bullet^m)\otimes N_H(\bullet^n)\to N_H(\bullet^{m+n})$ is
\begin{equation}\label{nervecm}
\id_H^{\otimes(m-1)}\otimes\eta\otimes\id_H^{\otimes(n-1)}\colon H^{\otimes(m-1)}\otimes H^{\otimes(n-1)}\to H^{\otimes(m+n-1)}
\end{equation}
and the remaining coherence morphisms are identities.

If $\mc C=\Set^\text{op}$ and $\otimes=\times$ then $H$ is a group and $N_H$ is its nerve, with the identification $N_H(\bullet^n)=H^{n-1}$ given by \eqref{groupNerveIso}.
The above graphical algorithm can be recovered from this example.

%\begin{rem}
%	Restricting to morphisms in $\mathsf{Com}$ with non-intersecting strands, one obtains from $N_H$ an augmented cosimplicial object $\Delta_{+}  \to \mathcal C$. This is the bar construction of the coaugmented coalgebra $H$, given by $[n] = \bullet^{n+1} \mapsto H^{\otimes n}$. The (co)face maps come either from the coproduct, or from the coaugmentation by the unit of $H$. The (co)degeneracy maps come from the counit of $H$. Note that only the coaugmented coalgebra structure of $H$ enters here, whereas $N_H\colon\mathsf{Com}\to\C$ uses the commutative Hopf algebra structure.
%\end{rem}

One can recognize nerves of commutative Hopf algebras among all symmetric lax monoidal functors $\ms{Com}\to\C$ by an analogue of the map \eqref{recnerve}. 
We shall say that a braided lax monoidal functor $N\colon \ms{Com}\to\C$ or $N\colon \ms{BrCom}\to\C$ \emph{satisfies the nerve condition} if the morphism $N({\bullet\bullet})^{\otimes (n-1)}\to N(\bullet^{n})$ given by 
\begin{equation}\label{Segal}
\hphantom{(n=5)\quad}
\begin{tikzpicture}[xscale=0.4, yscale=0.5, baseline=0.7cm]
\fill[black!10]%
     (-1.5,3)to[out=-90,in=90](-0.5,0)--
     (1.5,0)..controls +(0.1,1.5) and +(-0.1,1.5)..(2.5,0)--
     (4.5,0)..controls +(0.1,1.5) and +(-0.1,1.5)..(5.5,0)--
     (7.5,0)..controls +(0.1,1.5) and +(-0.1,1.5)..(8.5,0)--
     (10.5,0)to[out=90,in=-90](11.5,3)--cycle;
\draw[black!30,thick]%
     (-1.5,3)to[out=-90,in=90](-0.5,0)
     (1.5,0)..controls +(0.1,1.5) and +(-0.1,1.5)..(2.5,0)
     (4.5,0)..controls +(0.1,1.5) and +(-0.1,1.5)..(5.5,0)
     (7.5,0)..controls +(0.1,1.5) and +(-0.1,1.5)..(8.5,0)
     (10.5,0)to[out=90,in=-90](11.5,3);
\draw (0,0)node[bul]{}to[out=90,in=-90]+(-1,3)node[bul]{};
\draw (1,0)node[bul]{}to[out=90,in=-90]+(1,3)node[bul]{};
\draw (3,0)node[bul]{}to[out=90,in=-90]+(-1,3)node[bul]{};
\draw (4,0)node[bul]{}to[out=90,in=-90]+(1,3)node[bul]{};
\draw (6,0)node[bul]{}to[out=90,in=-90]+(-1,3)node[bul]{};
\draw (7,0)node[bul]{}to[out=90,in=-90]+(1,3)node[bul]{};
\draw (9,0)node[bul]{}to[out=90,in=-90]+(-1,3)node[bul]{};
\draw (10,0)node[bul]{}to[out=90,in=-90]+(1,3)node[bul]{};
\end{tikzpicture}
\quad(n=5)
\end{equation}
%as the composition
%\begin{subequations}\label{Segal}
%\begin{equation}
%N({\bullet\bullet})^{\otimes (n-1)}\to N(\bullet^{2n-2})\to N(\bullet^{n})
%\end{equation}
%\begin{equation}
%\begin{tikzpicture}[scale=0.5,baseline=0.8cm]
%\fill[black!10] (-1.5,0)--(-1,3)--(6,3)--(6.5,0)%
%--(4.5,0)..controls +(-0.75,1.5) and +(0.25,1.5).. (3.5,0)%
%--(1.5,0)..controls +(-0.25,1.5) and +(0.75,1.5).. (0.5,0);
%\draw[black!30,thick] (-1.5,0)--(-1,3) (6,3)--(6.5,0)%
%(4.5,0)..controls +(-0.75,1.5) and +(0.25,1.5).. (3.5,0)%
%(1.5,0)..controls +(-0.25,1.5) and +(0.75,1.5).. (0.5,0);
%\node(1a) at (-1,0)[bul] {};
%\node(1b) at (0,0)[bul] {};
%\node(2a) at (2,0)[bul] {};
%\node(2b) at (3,0)[bul] {};
%\node(3a) at (5,0)[bul] {};
%\node(3b) at (6,0)[bul] {};
%\node(a) at (-.5,3)[bul] {};
%\node(b) at (1.5,3)[bul] {};
%\node(c) at (3.5,3)[bul] {};
%\node(d) at (5.5,3)[bul] {};
%\draw (1a)--(a) (1b) to[out=75, in=-90] (b) (2a) to[out=90, in=-90] (b) (2b) to[out=90, in=-90] (c) (3a) to[out=105, in=-90] (c) (3b)--(d);
%\node (x) at (2.5,-1.2) {$N({\bullet\bullet})^{\otimes3}$};
%\node (y) at (2.5,3.9) {$N(\bullet^4)$};
%\node at(10,1.5) {$(n=4)$};
%\node at(-5,1.5){};
%\end{tikzpicture}
%\end{equation}
%\end{subequations}
is an isomorphism for every $n\geq 2$,
%\footnote{to be precise, since $\C$ is not required to be strict, we need to choose a parenthesization in the definition of $N({\bullet\bullet})^{\otimes (n-1)}$, but the nerve condition is independent of this choice}
and if the two morphisms
\begin{equation*}
1_\C\xrightarrow{c^N_1} N(1)\xrightarrow{N(\uln \eta)} N(\bullet)
\end{equation*} are also isomorphisms.
We then have the following minor generalization of Proposition \ref{prop:grnerve} (which corresponds to the case $\C=\Set^\mathrm{op}$, ${\otimes}={\times}$).

\begin{prop}\label{prop:comHopf}%[Nerves of commutative Hopf algebras]
Let $\C$ be a SMC. The category of commutative Hopf algebras  in $\C$ is equivalent to the category of symmetric lax monoidal functors
$$N\colon \ms{Com}\to\C$$
satisfying the nerve condition.

 The Hopf algebra corresponding to $N$ is $H=N({\bullet\bullet})$. The commutative algebra structure of $H$ comes from the commutative algebra ${\bullet\bullet}$, i.e.\ the product is 
$$m=\begin{tikzpicture}[xscale=0.6,yscale=0.7,baseline=0.5cm]
\coordinate (diff) at (0.8,0);
\coordinate (dy) at (0,0.3);
\coordinate(A1) at (0,0);
\coordinate(B1) at ($(A1)+(diff)$);
\coordinate(A2) at (2,0);
\coordinate(B2) at ($(A2)+(diff)$);
\coordinate(A3) at ($(B1)+(0,2)$);
\coordinate(B3) at ($(A2)+(0,2)$);
\coordinate(ep) at (0.3,0);
\coordinate(A) at ($(A3)-(dy)$);
\coordinate(B) at ($(B3)-(dy)$);

\fill[black!10] ($(A1)-(ep)$)..controls +(0,1) and +(0,-1)..($(A)-(ep)$)--($(A3)-(ep)$)%
--($(B3)+(ep)$)--($(B)+(ep)$)..controls +(0,-1) and +(0,1)..($(B2)+(ep)$)%
--($(A2)-(ep)$)..controls +(0,.5) and +(0,.5)..($(B1)+(ep)$);

\draw[black!30,thick] ($(A1)-(ep)$)..controls +(0,1) and +(0,-1)..($(A)-(ep)$)--($(A3)-(ep)$)%
($(B3)+(ep)$)--($(B)+(ep)$)..controls +(0,-1) and +(0,1)..($(B2)+(ep)$)%
($(A2)-(ep)$)..controls +(0,.5) and +(0,.5)..($(B1)+(ep)$);

\node(A1) at (A1)[bul]{};
\node(B1) at (B1) [bul]{};
\node(A2) at (A2) [bul]{};
\node(B2) at (B2) [bul]{};
\node(A3) at (A3) [bul]{};
\node(B3) at (B3) [bul]{};

\draw (A2)..controls +(0,1) and +(0,-1)..(A);
\draw (B1)..controls +(0,1) and +(0,-1)..(B);
\draw (A1)..controls +(0,1) and +(0,-1)..(A);
\draw (B2)..controls +(0,1) and +(0,-1)..(B);
\draw (B)--(B3);
\draw(A)--(A3);
\end{tikzpicture}$$
 and the unit is the composition $1_\C\to N(1)\to N({\bullet\bullet})$. The coproduct, the counit, and the antipode are given by 
$$
\Delta=\begin{tikzpicture}[xscale=0.3,yscale=0.6,baseline=0.5cm]
\coordinate (bor) at (0.8,0);
\coordinate(1) at (-1,0);
\coordinate(2) at (1,0);
\coordinate(a) at (-2,2);
\coordinate(b) at (0,2);
\coordinate(c) at (2,2);
\fill[black!10] ($(1)-(bor)$)to[out=90,in=-90]($(a)-(bor)$)--($(c)+(bor)$)to[out=-90,in=90]($(2)+(bor)$);
\draw[black!30,thick]($(1)-(bor)$)to[out=90,in=-90]($(a)-(bor)$) ($(c)+(bor)$)to[out=-90,in=90]($(2)+(bor)$);
\node(1) at (1)[bul]{};
\node(2) at (2)[bul]{};
\node(a) at (a)[bul]{};
\node(b) at (b)[bul]{};
\node(c) at (c)[bul]{};
\draw (1)to[out=90,in=-90](a) (2)to[out=90,in=-90](c);
\end{tikzpicture}\qquad
\epsilon=\begin{tikzpicture}[scale=0.6,baseline=0.5cm]
\fill[black!10] (-0.5,0)to[out=90,in=-90](0,2)--(1,2)to[out=-90,in=90](1.5,0);
\draw[black!30,thick](-0.5,0)to[out=90,in=-90](0,2)  (1,2)to[out=-90,in=90](1.5,0);
\node(1) [bul] at (0,0){};
\node(2) [bul] at (1,0){};
\node(3) [bul] at (0.5,2){};
\draw (1) to[out=90,in=-90](3) (2) to[out=90,in=-90](3);

\end{tikzpicture}\qquad
S=\begin{tikzpicture}[scale=0.6,baseline=0.5cm]
\fill[black!10](-0.3,0)--(-0.3,2)--(1.3,2)--(1.3,0);
\draw[black!30,thick](-0.3,0)--(-0.3,2) (1.3,2)--(1.3,0);
\node(1) at (0,0)[bul]{};
\node(2) at (1,0)[bul]{};
\node(a) at (0,2)[bul]{};
\node(b) at (1,2)[bul]{};
\draw (2)..controls +(0,1) and +(0,-1)..(a);
\draw (1)..controls +(0,1) and +(0,-1)..(b);
\end{tikzpicture}
$$
where we implicitly use the isomorphisms $N(\bullet^3)\cong N({\bullet\bullet})^{\otimes 2}$ and $N(\bullet)\cong 1_\C$ given by the nerve condition. 
\end{prop}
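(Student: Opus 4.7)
The plan is to construct functors in both directions and verify they are mutually quasi-inverse.

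For one direction, given a commutative Hopf algebra $H$, define $N_H$ on objects by $N_H(\bullet^n) = H^{\otimes(n-1)}$ and on morphisms by the graphical algorithm recalled above. Well-definedness requires checking that $N_H$ respects the three defining relations of $\ms{Com}$: associativity and unitality of $\uln m$ translate directly into the corresponding Hopf algebra identities, while commutativity of $\uln m$ becomes the standard equality $\epsilon\circ S = \epsilon$. Equipped with the coherence maps \eqref{nervecm}, the functor $N_H$ becomes symmetric lax monoidal; the nerve condition then follows by a direct calculation showing that the map \eqref{Segal} unwinds to the identity on $H^{\otimes(n-1)}$.

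For the other direction, given $N$ satisfying the nerve condition, put $H = N({\bullet\bullet})$. The commutative algebra structure on $H$ comes from recognizing $\bullet\otimes\bullet$ as a commutative algebra in $\ms{Com}$, via Proposition \ref{prop:scosca}; the coproduct, counit, and antipode are $N$ applied to the pictured morphisms of $\ms{Com}$, composed with the Segal isomorphisms. Each Hopf algebra axiom (coassociativity, counitality, bialgebra compatibility, antipode) then reduces, via functoriality of $N$, to an equality of morphisms in $\ms{Com}$, which is an identity of maps $\{1,\dots,m\}\to\{1,\dots,n\}$ (cf.\ Example \ref{ex:Com}) and is elementary to verify; monoidality of the coherence maps keeps track of the tensor-factor indexing.

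To see the constructions are mutually inverse, starting from $H$, forming $N_H$, and reading off the Hopf structure on $N_H({\bullet\bullet}) = H$ recovers the original structure morphisms by inspection. Conversely, starting from $N$ and constructing the functor $N_H$ for $H = N({\bullet\bullet})$, the Segal isomorphisms provide a comparison $N_H \Rightarrow N$; naturality on arbitrary morphisms follows because every morphism in $\ms{Com}$ is generated by $\uln m$ and $\uln\eta$, and these generators have been matched. Functoriality in Hopf algebra morphisms and in monoidal natural transformations is then routine.

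The principal obstacle is the bialgebra compatibility axiom: to identify both $\Delta\circ m$ and $(m\otimes m)\circ(\id\otimes\beta\otimes\id)\circ(\Delta\otimes\Delta)$ as $N$ applied to a common morphism $\bullet^4\to\bullet^3$ in $\ms{Com}$ requires an essential use of the commutativity of $\uln m$ together with the symmetry $\beta$ of $\C$. This is exactly where the SMC hypothesis is needed and anticipates the braided generalization of the next section.
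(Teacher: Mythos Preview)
The paper does not actually supply a proof here; it states only that the argument ``is the same as for Proposition~\ref{prop:grnerve}'' and leaves the details to the reader. Your outline is therefore more explicit than the paper's treatment, and the overall architecture (build $N_H$ from $H$ via the graphical algorithm, build $H_N$ from $N$ via the indicated $\ms{Com}$-morphisms, check the two round trips) is correct and matches what the paper does in detail for the braided case in \S\ref{sec:pfbr}.

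There is, however, one genuine confusion in your first step. Once $N_H$ is defined by the graphical algorithm on maps $\{1,\dots,m\}\to\{1,\dots,n\}$, it is already well-defined: each map produces a single morphism in $\C$. What must be checked is \emph{functoriality}, $N_H(\phi_1\circ\phi_2)=N_H(\phi_1)\circ N_H(\phi_2)$, and this does not reduce to the ``three defining relations of $\ms{Com}$''. Indeed, in $\ms{Com}$ the two sides of associativity (resp.\ commutativity) of $\uln m$ are literally the \emph{same} map $\{1,2,3\}\to\{1\}$ (resp.\ $\{1,2\}\to\{1\}$), so there is no relation to verify at that level. Functoriality is instead a statement about arbitrary composable pairs, and its content is precisely the bialgebra compatibility $\Delta\circ m=(m\otimes m)\circ(\id\otimes\sigma\otimes\id)\circ(\Delta\otimes\Delta)$ (together with the fact that $S$ is an anti-morphism of both algebras and coalgebras); the paper carries this out in the braided setting via the iterated identity~\eqref{bialgbig}. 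Your observation that $\epsilon\circ S=\epsilon$ enters is correct---it arises when composing a transposition with $\uln m$---but it is one ingredient in the functoriality check, not a standalone ``relation'' whose verification suffices.

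The remainder of your sketch (Hopf axioms for $H_N$ as equalities of maps of finite sets, the round-trip identifications, and the role of the symmetry in the bialgebra axiom) is sound.
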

The proof is the same as for Proposition \ref{prop:grnerve} and we leave its details to the reader.

\begin{rem}
Proposition \ref{prop:comHopf} and the construction of $N_H$ out of $H$ have the following more familiar ``non-symmetric'' version, which uses coaugmented coalgebras in place of commutative Hopf algebras. 

Let $\Delta_+\subset\mathsf{Com}$ denote the subcategory with all the objects but only with the morphisms with non-intersecting strands (the strands are allowed to collide at their endpoints). It can be seen as the augmented simplex category, with $[n]=\bullet^{n+1}$, or equivalently as the free strict monoidal category generated by a monoid $\bullet$.

If $\C$ is a monoidal category, we then have an equivalence between lax monoidal functors $N\colon\Delta_+\to\C$ satisfying the nerve condition, and coaugmented coalgebras in $\C$. If $N$ is given, the operations $\Delta,\epsilon,\eta$ on the corresponding coaugmented coalgebra $C=N(\bullet\bullet)$ are as in Proposition \ref{prop:comHopf}. If $C$ is given then the corresponding $N$ is the bar construction of the coaugmented coalgebra $C$.
\end{rem}

\subsection{Nerves of braided Hopf algebras}
Proposition \ref{prop:comHopf} has a straightforward generalization to the world of noncommutative Hopf algebras (announced in \cite[Remark 4]{ja}; a similar result appeared in \cite{KS}): 

\begin{thm}\label{thm:brnerve}%[Nerves of braided Hopf algebras]
Let $\C$ be a BMC. The category of Hopf algebras with invertible antipodes in $\C$ is equivalent to the category of braided lax monoidal functors
$$N\colon \ms{BrCom}\to\C$$
satisfying the nerve condition.

The Hopf algebra corresponding to $N$ is $H=N({\bullet\bullet})$. The algebra structure of $H$ comes from the algebra structure of ${\bullet\bullet}$ (which is the tensor product of two algebras~$\bullet$), i.e. the product of $H$ is
$$m=\begin{tikzpicture}[xscale=0.6,yscale=0.7,baseline=0.5cm]
\coordinate (diff) at (0.8,0);
\coordinate (dy) at (0,0.3);
\coordinate(A1) at (0,0);
\coordinate(B1) at ($(A1)+(diff)$);
\coordinate(A2) at (2,0);
\coordinate(B2) at ($(A2)+(diff)$);
\coordinate(A3) at ($(B1)+(0,2)$);
\coordinate(B3) at ($(A2)+(0,2)$);
\coordinate(ep) at (0.3,0);
\coordinate(A) at ($(A3)-(dy)$);
\coordinate(B) at ($(B3)-(dy)$);

\fill[black!10] ($(A1)-(ep)$)..controls +(0,1) and +(0,-1)..($(A)-(ep)$)--($(A3)-(ep)$)%
--($(B3)+(ep)$)--($(B)+(ep)$)..controls +(0,-1) and +(0,1)..($(B2)+(ep)$)%
--($(A2)-(ep)$)..controls +(0,.5) and +(0,.5)..($(B1)+(ep)$);

\draw[black!30,thick] ($(A1)-(ep)$)..controls +(0,1) and +(0,-1)..($(A)-(ep)$)--($(A3)-(ep)$)%
($(B3)+(ep)$)--($(B)+(ep)$)..controls +(0,-1) and +(0,1)..($(B2)+(ep)$)%
($(A2)-(ep)$)..controls +(0,.5) and +(0,.5)..($(B1)+(ep)$);

\node(A1) at (A1)[bul]{};
\node(B1) at (B1) [bul]{};
\node(A2) at (A2) [bul]{};
\node(B2) at (B2) [bul]{};
\node(A3) at (A3) [bul]{};
\node(B3) at (B3) [bul]{};

\draw[line width=1ex,black!10]  (A2)..controls +(0,1) and +(0,-1)..(A);
\draw (A2)..controls +(0,1) and +(0,-1)..(A);
\draw[line width=1ex,black!10] (B1)..controls +(0,1) and +(0,-1)..(B);
\draw (B1)..controls +(0,1) and +(0,-1)..(B);
\draw (A1)..controls +(0,1) and +(0,-1)..(A);
\draw (B2)..controls +(0,1) and +(0,-1)..(B);
\draw (B)--(B3);
\draw(A)--(A3);
\end{tikzpicture}
$$
and the unit is the composition $1_\C\to N(1)\to N({\bullet\bullet})$. The coproduct, the counit, and the antipode are given by
$$
\Delta=\begin{tikzpicture}[xscale=0.3,yscale=0.6,baseline=0.5cm]
\coordinate (bor) at (0.8,0);
\coordinate(1) at (-1,0);
\coordinate(2) at (1,0);
\coordinate(a) at (-2,2);
\coordinate(b) at (0,2);
\coordinate(c) at (2,2);
\fill[black!10] ($(1)-(bor)$)to[out=90,in=-90]($(a)-(bor)$)--($(c)+(bor)$)to[out=-90,in=90]($(2)+(bor)$);
\draw[black!30,thick]($(1)-(bor)$)to[out=90,in=-90]($(a)-(bor)$) ($(c)+(bor)$)to[out=-90,in=90]($(2)+(bor)$);
\node(1) at (1)[bul]{};
\node(2) at (2)[bul]{};
\node(a) at (a)[bul]{};
\node(b) at (b)[bul]{};
\node(c) at (c)[bul]{};
\draw (1)to[out=90,in=-90](a) (2)to[out=90,in=-90](c);
\end{tikzpicture}\qquad
\epsilon=\begin{tikzpicture}[scale=0.6,baseline=0.5cm]
\fill[black!10] (-0.5,0)to[out=90,in=-90](0,2)--(1,2)to[out=-90,in=90](1.5,0);
\draw[black!30,thick](-0.5,0)to[out=90,in=-90](0,2)  (1,2)to[out=-90,in=90](1.5,0);
\node(1) [bul] at (0,0){};
\node(2) [bul] at (1,0){};
\node(3) [bul] at (0.5,2){};
\draw (1) to[out=90,in=-90](3) (2) to[out=90,in=-90](3);
\end{tikzpicture}\qquad
S=\begin{tikzpicture}[scale=0.6,baseline=0.5cm]
\fill[black!10](-0.3,0)--(-0.3,2)--(1.3,2)--(1.3,0);
\draw[black!30,thick](-0.3,0)--(-0.3,2) (1.3,2)--(1.3,0);
\node(1) at (0,0)[bul]{};
\node(2) at (1,0)[bul]{};
\node(a) at (0,2)[bul]{};
\node(b) at (1,2)[bul]{};
\draw (2)..controls +(0,1) and +(0,-1)..(a);
\draw[line width=1ex,black!10] (1)..controls +(0,1) and +(0,-1)..(b);
\draw (1)..controls +(0,1) and +(0,-1)..(b);
\end{tikzpicture}
$$
where we implicitly use the isomorphisms $N(\bullet^3)\cong N({\bullet\bullet})^{\otimes 2}$ and $N(\bullet)\cong 1_\C$ given by the nerve condition. 
\end{thm}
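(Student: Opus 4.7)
The argument parallels Proposition \ref{prop:comHopf}, with the symmetry of $\C$ replaced by its braiding and with the antipode promoted to an invertible one.

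Starting from a Hopf algebra $H$ with invertible antipode, I would define $N_H\colon\ms{BrCom}\to\C$ on objects by $N_H(\bullet^n)=H^{\otimes(n-1)}$, $N_H(1)=1_\C$, and on morphisms by the four-step graphical recipe used in the commutative case, with one modification: a positive strand crossing contributes $S$ to the corresponding tensor factor and a negative one contributes $S^{-1}$ (this is precisely where invertibility of $S$ enters). The coherence morphisms are defined by \eqref{nervecm}. To verify that $N_H$ is well defined one checks that it respects the defining relations of $\ms{BrCom}$: associativity, unit, and braided commutativity $\uln{m}=\uln{m}\circ\beta$ of $\uln{m}$, together with the braid group relations implicit in the braided prop structure. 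Each relation translates into a diagrammatic identity in $H$ that follows from the bialgebra and antipode axioms. Once $N_H$ is a well-defined functor, the braided lax monoidal structure is immediate from the definition of its coherence morphisms and the fact that $\Delta$ is an algebra morphism.

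Conversely, given a braided lax monoidal $N\colon\ms{BrCom}\to\C$ satisfying the nerve condition, I would set $H=N({\bullet\bullet})$, equip it with the algebra structure induced by $N$ being lax monoidal and ${\bullet\bullet}$ being an algebra in $\ms{BrCom}$, and define its coproduct, counit and antipode by the depicted morphisms, using the isomorphisms $N(\bullet^3)\cong H^{\otimes 2}$ and $N(\bullet)\cong 1_\C$ coming from the nerve condition. Applying $N$ to the corresponding identities in $\ms{BrCom}$ yields the bialgebra and antipode axioms. Invertibility of the antipode is inherited from the fact that the morphism in $\ms{BrCom}$ defining $S$ involves a single braid crossing and is therefore invertible in $\ms{BrCom}$, so $N(S)$ is invertible in $\C$.

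To conclude, one verifies that the two assignments are mutually quasi-inverse. The round-trip $H\mapsto N_H\mapsto N_H({\bullet\bullet})$ recovers $H$ with the correct operations by direct comparison. For the other direction, the nerve condition provides a canonical monoidal natural isomorphism $N\cong N_{N({\bullet\bullet})}$: the identification of $N(\bullet^n)$ with $H^{\otimes(n-1)}$ is built into the nerve condition, and any morphism of $\ms{BrCom}$ decomposes into the generators $\uln{m}$, $\uln{\eta}$, and braid crossings, whose images under $N$ are prescribed by the extracted Hopf algebra structure via the graphical recipe.

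The main obstacle is the verification that $N_H$ respects the relations of $\ms{BrCom}$. The nontrivial cases are: the Reidemeister-III move (which holds because $\Delta$ is an algebra morphism, so the braiding is compatible with the coproduct in the sense needed by the diagrams), the Reidemeister-II cancellation (which reduces to $S\circ S^{-1}=\id$), and the move sliding a crossing past a multiplication vertex, which amounts precisely to the antipode axiom $m\circ(S\otimes\id)\circ\Delta=\eta\circ\epsilon$ and its twin with $S^{-1}$. These verifications are elementary but must be done carefully, since each morphism of $\ms{BrCom}$ admits many presentations and consistency has to be checked for each elementary move.
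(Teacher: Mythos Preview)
Your overall two-direction strategy matches the paper, but the heart of the argument---the construction of $N_H$ from $H$---has a real gap. You propose to modify the commutative recipe by having ``a positive strand crossing contribute $S$ and a negative one $S^{-1}$,'' but this does not yet specify a map $N_H(\phi)$ for a general morphism $\phi$ of $\ms{BrCom}$: it is unclear which tensor factor of $H^{\otimes(m-1)}$ receives which power of $S$, how contributions from several crossings combine, and how this interacts with the iterated coproducts when a source interval lands across several target intervals. The paper does not proceed via generators and relations at all; it gives a direct geometric construction of $N_H(\phi)$: interpret $\phi$ as an isotopy of the plane, draw $n-1$ vertical transversals between the landing pads, apply $\Delta^{(k)}$ to each source-interval copy of $H$ according to its number $k$ of intersections with the transversals, apply $S^{k_P}$ at each intersection $P$ where $k_P$ is the signed half-turn count along the curve up to $P$, then braid and multiply along each transversal.

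Your bookkeeping of which Hopf axiom verifies which relation is also off. In the paper, well-definedness of $N_H(\phi)$ (independence from the isotopy and from the transversals) reduces to a single local move---the image of an interval crossing a transversal and then returning---and this is exactly the antipode axiom $m\circ(\id\otimes S)\circ\Delta=\eta\circ\epsilon$; it is not Reidemeister~II and does not reduce to $S\circ S^{-1}=\id$. Functoriality $N_H(\phi_1\circ\phi_2)=N_H(\phi_1)\circ N_H(\phi_2)$ is the iterated bialgebra identity $\Delta^{(p)}\circ m^{(q)}=(m^{(q)})^{\otimes p}\circ\tau_{p,q}\circ(\Delta^{(p)})^{\otimes q}$, which does not appear in your list. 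And the braided-commutativity relation $\uln m=\uln m\circ\beta$ yields under $N_H$ only $\epsilon=\epsilon\circ S$, not the full antipode axiom as you claim.
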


The proof can be found in \S\ref{sec:pfbr}. Its main part is the construction of a suitable $N$ out of $H$, which is a braided version of the above-given construction $H\mapsto N_H$ for commutative Hopf algebras.

\begin{rem}
	The nerve condition can be seen as strong monoidality of the functor $N\circ s\colon \mathsf{BrCom} \to \mathcal C$, where $s\colon \mathsf{BrCom} \to \mathsf{BrCom}, \; \bullet^n\mapsto \bullet^{n+1}$ is described in \cite[Rem.~1]{ja}. However, $N\circ s$ will usually not be braided; to get a braided strong monoidal functor, one needs to replace $\mathcal C$ as in \cite[Rem.~5]{ja}.
\end{rem}

\section{Infinitesimal braidings}

\subsection{Infinitesimally braided categories}

If $R$ is a commutative ring, an \emph{$R$-linear category} is a category enriched over $R$-modules, i.e.\ a category such that $\Hom(X,Y)$ is an $R$-module for any two objects $X$ and $Y$ and such that the composition of morphisms is $R$-bilinear.

An \emph{$R$-linear monoidal category}  is a category which is both $R$-linear and monoidal and such that the tensor product of morphisms is $R$-bilinear.

If $\C$ is an $R$-linear category, let $\C_\epsilon$ be the $R[\epsilon]/(\epsilon^2)$-linear category obtained by extension of scalars, i.e.\ $\C_\epsilon$ has the same objects as $\C$ and 
$$\Hom_{\C_\epsilon}(X,Y)=\Hom_\C(X,Y)[\epsilon]/(\epsilon^2).$$
An $R$-\emph{infinitesimally braided monoidal category} \cite[Sec.~4.]{Cartier} ($R$-iBMC) is a symmetric $R$-linear monoidal category $\C$ with a natural transformation $t_{X,Y}\colon X\otimes Y\to X\otimes Y$ called \emph{infinitesimal braiding} such that $\C_\epsilon$ with the modified braiding 
$$\beta_{X,Y}\vcentcolon =\sigma_{X,Y}\circ(1+\epsilon t_{X,Y})$$
(where $\sigma_{X,Y}\colon X\otimes Y\to Y\otimes X$ is the symmetry in $\C$)
 is a braided monoidal category. Moreover, $t_{X,Y}$ is required to satisfy the symmetry condition
$$t_{Y,X}=\sigma_{X,Y}\circ t_{X,Y}\circ\sigma_{Y,X}.$$
The morphism $t_{X,Y}$ will be graphically represented by a horizontal chord
$$
\begin{tikzpicture}[xscale=0.7,yscale=0.5]
\draw[thick] (0,0)node[below]{$X$}--node[rbul](a){}(0,2)node[above]{$X$} (1,0)node[below]{$Y$}--node[rbul](b){}(1,2)node[above]{$Y$};
\draw[red](a)--(b);
\end{tikzpicture}
$$

An \emph{i-braided lax/strong monoidal functor} $\C_1\to\C_2$ between two $R$-iBMCs is an $R$-linear symmetric lax/strong monoidal functor such that
$$F(t^{\C_1}_{X,Y})\circ c^F_{X,Y}=c^F_{X,Y}\circ t^{\C_2}_{F(X),F(Y)}.$$

Let us notice that any $R$-linear SMC becomes an $R$-iBMC if we set $t_{X,Y}=0$.

\begin{example}[{\cite{drass}}]\label{ex:gandt}
If $\g$ is a Lie algebra over $R$ and $t\in \g\otimes\g$ is symmetric and $\g$-invariant then the symmetric monoidal category
$U\g$-mod is $R$-i-braided via 
$$t_{X,Y}=\rho_X\otimes\rho_Y (t)\in\on{End}(X\otimes Y).$$
\end{example}

\subsection{Drinfeld associators}
An iBMC is  a SMC with a first order deformation of the symmetry to a braiding. A natural question is whether one can extend this first order deformation to a formal deformation. 

If $\C$ is an $R$-linear category, let $\C_\hbar$ be the $R[\![\hbar]\!]$-linear category with $\on{Ob}(\C_\hbar)=\on{Ob}(\C)$ and with 
$$\Hom_{\C_\hbar}(X,Y)=\Hom_\C(X,Y)[\![\hbar]\!].$$
Likewise, if $F\colon \C\to\C'$ is an $R$-linear functor, let $F_\hbar\colon \C_\hbar\to\C'_\hbar$ be its (continuous) $R[\![\hbar]\!]$-linear extension, i.e.
$$ F_\hbar\bigl(\sum f_n \hbar^n\bigr) = \sum F(f_n) \hbar^n, \quad \quad f_n \in \Hom_\C(X, Y). $$

\begin{thd}[{Drinfeld \cite[Thm.~A$^{\prime\prime}$]{drass}}]\label{thm:DrCat} \label{thd:drinfeld}There is a non-commutative power series 
$$\Phi\in\Q\la\!\la x,y\ra\!\ra$$
 (a \emph{Drinfeld associator}), $\Phi=1+[x,y]/24+\dots$,  with the following property. If $\Q\subset R$ and if $\C$ is a $R$-iBMC then $\C_\hbar$, with the new braiding $\beta^\text{new}_{X,Y}=\sigma_{X,Y}\circ \exp(\hbar\,t_{X,Y}/2)$ and the new associativity constraint 
$$\gamma^\text{new}_{X,Y,Z}=\gamma_{X,Y,Z}\circ\Phi\bigl(\hbar\,t_{X,Y}\otimes\on{id}_Y,\hbar\,\gamma_{X,Y,Z}^{-1}\circ(\on{id}_X\otimes t_{Y,Z})\circ\gamma_{X,Y,Z}\bigr),$$
(where $\gamma_{X,Y,Z}\colon(X\otimes Y)\otimes Z\to X\otimes(Y\otimes Z)$ is the associativity constraint of $\C$) is a BMC which we shall denote by $\C_\hbar^\Phi$. 

If $F\colon \C_1\to\C_2$ is an $R$-i-braided lax/strong monoidal functor then $$F_\hbar\colon (\C_1)_\hbar^\Phi\to(\C_2)_\hbar^\Phi,$$ with the same coherence morphisms, is braided lax/strong monoidal.
\end{thd}

Notice that
$$\beta^\mathit{new}_{X,Y}=\sigma_{X,Y}\circ(1+\tfrac\hbar2\,t_{X,Y})+O(\hbar^2),\quad\gamma^\mathit{new}_{X,Y,Z}=\gamma_{X,Y,Z}+O(\hbar^2).$$
Also notice that if $t_{X,Y}=0$ for all objects $X,Y\in\C$ then $\C_\hbar^\Phi=\C_\hbar$ as a symmetric monoidal category, as then $\beta^\mathit{new}_{X,Y}=\sigma_{X,Y}$ and $\gamma^\mathit{new}_{X,Y,Z}=\gamma_{X,Y,Z}$.

\subsection{Poisson algebras in infinitesimally braided categories} \label{ssec:PalgiBMC}

An (infinitesimally braided)  \emph{Poisson algebra} in an iBMC $\C$ is a commutative algebra $A\in\C$  together with a biderivation 
\[p\colon A\otimes A\to A\]
 (the ``Poisson bracket'')  satisfying the modified skew-symmetry 
\begin{equation}\label{skewmod}
p+p\circ\sigma_{A,A}=m\circ t_{A,A}
\end{equation}
(where $m\colon A\otimes A\to A$ is the product) and the modified Jacobi identity
\begin{equation}\label{Jacobi}
\begin{tikzpicture}[scale=0.5,baseline]
\node[bbul] (A) at (0,1) {};
\node[bbul] (B) at (0.5,0) {};
\draw[thick] (-1,-1)--(A)--(B)--(0,-1) (B)--(1,-1) (A)--(0,2);
\end{tikzpicture}
=
\begin{tikzpicture}[scale=0.5,baseline]
\node[bbul] (A) at (0,1) {};
\node[bbul] (B) at (-0.5,0) {};
\draw[thick] (1,-1)--(A)--(B)--(0,-1) (B)--(-1,-1) (A)--(0,2);
\end{tikzpicture}
+
\begin{tikzpicture}[scale=0.5,baseline]
\node[bbul] (A) at (0,1) {};
\node[bbul] (B) at (0.5,0) {};
\draw[thick] (-1,-1)--(B)--(A)--(0,-1) (B)--(1,-1) (A)--(0,2);
\end{tikzpicture}
-
\begin{tikzpicture}[scale=0.5,baseline]
\coordinate (A) at (0,1);
\node[bbul] (B) at (0.5,0) {};
\draw[thick] (-1,-1)--node(a2)[pos=0.3, rbul]{}(B)--(A)--node(a1)[pos=0.85, rbul]{}(0,-1) (B)--(1,-1) (A)--(0,2);
\draw[red](a1)--(a2);
\end{tikzpicture}
\end{equation}
(using the notation
$p=\begin{tikzpicture}[scale=0.15, baseline=-3pt]
\node[bbul](a) at (0,0) {};
\draw[thick](0.7,-1)--(a)--(-0.7,-1) (0,1)--(a);
\end{tikzpicture}$ and
$m=\begin{tikzpicture}[scale=0.15, baseline=-3pt]
\coordinate(a) at (0,0);
\draw[thick](0.7,-1)--(a)--(-0.7,-1) (0,1)--(a);
\end{tikzpicture}$).

A commutative algebra $A\in\C$ is \emph{strongly commutative} if $m\circ t_{A,A}=0$. Then $A$ with $p=0$ is a Poisson algebra.

If $A\in\C$ is a Poisson algebra  and if $F\colon \C\to\C'$ is an i-braided lax monoidal functor then $F(A)\in\C'$ is a Poisson algebra too. 

If $A_1,A_2\in\C$ are Poisson algebras then so is $A_1\otimes A_2$, with the Poisson bracket 
\begin{equation}\label{pfusion}
\begin{tikzpicture}[scale=0.5,baseline]
\node[bbul](A) at(0,0.5){};
\coordinate(B) at(0.5,0.5);
\draw[thick] (-1,-1)--(A)--(1,-1) (-0.5,-1)--(B)--(1.5,-1) (A)--(0,1.5) (B)--(0.5,1.5);
\end{tikzpicture}
+
\begin{tikzpicture}[scale=0.5,baseline]
\coordinate(A) at(0,0.5);
\node[bbul](B) at(0.5,0.5){};
\draw[thick] (-1,-1)--(A)--(1,-1) (-0.5,-1)--(B)--(1.5,-1) (A)--(0,1.5) (B)--(0.5,1.5);
\end{tikzpicture}
+
\begin{tikzpicture}[scale=0.5,baseline]
\coordinate(A) at(0,0.5);
\coordinate(B) at(0.5,0.5){};
\draw[thick] (-1,-1)--(A)--node(a1)[rbul]{}(1,-1) (-0.5,-1)--node(a2)[rbul]{}(B)--(1.5,-1) (A)--(0,1.5) (B)--(0.5,1.5);
\draw[red] (a1)--(a2);
\end{tikzpicture}
\end{equation}
 Under this tensor product, Poisson algebras in $\C$ form a monoidal category.
 
If $\C$ is an $R$-linear SMC with $t_{X,Y}=0$ (for all objects $X,Y$) then these notions reduce to the usual definition of Poisson algebras in a linear SMC, and to the usual definition of their tensor products.

\begin{example}
If $\C=U\g$-mod is as in Example \ref{ex:gandt} and if $1/2\in R$ then Poisson algebras in $\C$ are the same as $\g$-quasi-Poisson algebras defined in \cite[Def.~2.1]{AKM}. Namely if we split a Poisson bracket $p$ on $A\in\C$ to its anti-symmetric and symmetric parts
$$p=\{,\}+\frac12\rho_A\otimes\rho_A(t)$$
then $\{,\}$ is a $\g$-invariant biderivation of the commutative algebra $A$ satisfying 
$$\{a,\{b,c\}\}+c.p.=-\frac{1}{4}\phi_A(a\otimes b\otimes c)$$
where $\phi_A\colon A^{\otimes 3}\to A$ is given by $\phi_A=m^{(3)}\circ\rho_A^{\otimes 3}(\phi)$, where  $m^{(3)}\colon A^{\otimes 3}\to A$ is the product of 3 elements and $\phi=[t\otimes1,1\otimes t]\in {\textstyle\bigwedge}^3\g\subset(U\g)^{\otimes3}$.

Tensor product of Poisson algebras then corresponds to the fusion product from \textit{op.\ cit.}
\end{example}

The definition of Poisson algebras and of their tensor product come from the following fact. Suppose  that $\Q\subset R$ and let $A$ be an algebra in $\C_\hbar^\Phi$ with a product 
$$m_{}=\sum_{n=0}^\infty\hbar^n m_n\qquad \bigl(m_n\in\Hom_\C(A\otimes A,A)\bigr)$$
such that $m_0$ is commutative. If we define $p\in\Hom_\C(A\otimes A,A)$ via the braided commutator
\begin{equation}\label{bmp}
m_{}-m_{}\circ\beta_{A,A}^{-1}=\hbar\,p + O(\hbar^2), \text{ i.e. } p=m_1-m_1\circ\sigma_{A,A}+\frac12m_0\circ t_{A,A}
\end{equation}
then $m_0$ and $p$ make $A$ to a Poisson algebra in the iBMC $\C$. We shall then say that $m_{}$ is a \emph{quantization} of the Poisson bracket $p$.

 Moreover, if $A_1$ and $A_2$ are two such algebras in $\C_\hbar^\Phi$ then the resulting Poisson bracket on $A_1\otimes A_2$ is given by \eqref{pfusion}.

\begin{prop}[{\cite[Prop.~2]{ja}}]\label{prop:strong-comm}
 If $\C$ is an $R$-iBMC and $A\in\C$ is a strongly commutative algebra then $A$ remains, with the same product (i.e.\ with $m_{}=m_0$) and unit, a commutative algebra in $\C_\hbar^\Phi$.
\end{prop}
\begin{proof}
Since $m_0\circ t_{A,A}=0$ and $m_0\circ\sigma_{A,A}=m_0$ (strong commutativity), we get $m_0\circ\beta_{A,A}=m_0$, i.e.\ the product $m_0$ is commutative in $\C_\hbar^\Phi$. Similarly, $m_0\circ t_{A,A}=0$ and the associativity of $m_0$ in $\C$ imply its associativity in $\C_\hbar^\Phi$.
\end{proof}

\subsection{Infinitesimally braided props}
An \emph{$R$-infinitesimally braided prop} (or $R$-i-braided prop) is a strict $R$-iBMC with objects $\bullet^n$, $n\geq0$, with the tensor product of objects $\bullet^m\otimes\bullet^n=\bullet^{m+n}$, and with the unit $1=\bullet^0$.

Let $\mc A_n$ be the Drinfeld-Kohno algebra \cite{drass}
\begin{multline*}
\mc A_n\vcentcolon=R\bigl\la t_{ij}, 1\leq i,j\leq n, i\neq j \mid t_{ij}=t_{ji}, [t_{ij}+t_{ik}, t_{jk}]=0,\\ [t_{ij},t_{kl}]=0\text{ if }i,j,k,l\text{ are all different}\bigr\ra
\end{multline*}
on which $S_n$ acts by permuting the indices. For any $R$-i-braided prop $\mc P$ we have a canonical map of $R$-algebras
%\begin{equation*}\label{DKprop}
$S_n\ltimes\mc A_n\to\on{End}(\bullet^n)$,
%\end{equation*}
 with $S_n$ coming from the symmetric monoidal category structure and $t_{ij}$ being the chord connecting the $i$'th and $j$'th $\bullet$ in $\bullet^n$.

\begin{example}
Let $\ms{iCom}(R)$ be the $R$-i-braided prop of strongly commutative algebras (initial among strict $R$-iBMCs with a chosen strongly commutative algebra.) It is generated by morphisms $\uln m\colon {\bullet\bullet}\to\bullet$ and $\uln \eta\colon 1\to\bullet$ modulo the relations saying that $\uln m$ is strongly commutative and associative and that $\uln \eta$ is its unit. Morphisms are then $R$-linear combinations of ``maps with chords''
$$
\begin{tikzpicture}[baseline=0.5cm]
\node(A1) at (0,0)[bul]{};
\node(A2) at (.5,0)[bul]{};
\node(A3) at (1,0)[bul]{};
\node(A4) at (1.5,0)[bul]{};
\node(B1) at (.25,1.3)[bul]{};
\node(B2) at (.75,1.3)[bul]{};
\node(B3) at (1.25,1.3)[bul]{};
\draw (A1)..controls +(0,.5) and +(0,-.5)..node(a1)[very near start, rbul]{}(B2);
\draw (A2)..controls +(0,.5) and +(0,-.5)..node(a2)[very near start, rbul]{}node(c1)[near end, rbul]{}(B1);
\draw (A3)..controls +(0,.5) and +(0,-.5)..node(b1)[near start, rbul]{}(B2);
\draw (A4)..controls +(0,.5) and +(0,-.5)..node(b2)[near start, rbul]{}node(c2)[near end, rbul]{}(B3);
\draw[red](a1)--(a2) (b1)--(b2) (c1)--(c2);
\node(X)at(0,-0.2){};
\end{tikzpicture}
$$
modulo the strong commutativity relation 
$$
\begin{tikzpicture}[scale=0.6,baseline=0.3cm]
\node(A)[bul] at (0,0) {};
\node(B)[bul] at (1,0) {};
\node(C)[bul] at (0.5,1.5) {};
\draw (A) to[out=90,in=-90] node(a)[near start,rbul]{} (C) (B) to[out=90,in=-90] node(b)[near start,rbul]{}(C);
\draw[red] (a)--(b);

\end{tikzpicture}
=0
$$ and the Drinfeld-Kohno relations (the defining relations of $\mc A_n$).

\end{example}

\section{Poisson Hopf algebras and their quantization}

\subsection{Poisson Hopf algebras in infinitesimally braided categories}

An (infinitesimally braided) \emph{Poisson Hopf algebra} \cite[Sec.~2]{drqg} in an iBMC $\C$ is a commutative Hopf algebra $H\in\C$ together with a Poisson bracket $p\colon H\otimes H\to H$ such that the coproduct $\Delta\colon H\to H\otimes H$ is a morphism of Poisson algebras. In other words, $p$ has to satisfy
\begin{equation}\label{pHA}
\begin{tikzpicture}[scale=0.5,baseline=0.7cm]
\coordinate(in1) at (-1,0);
\coordinate(in2) at (1,0);
\node[bbul](m1) at (0,1){};
\coordinate(m2) at (0,2);
\coordinate(out1) at (-1,3);
\coordinate(out2) at (1,3);
\draw[thick](in1)--(m1)--(in2) (m1)--(m2) (out1)--(m2)--(out2);
\end{tikzpicture}
\ =\ %
\begin{tikzpicture}[scale=0.5,baseline=0.7cm]
\coordinate(in1) at (-1,0);
\coordinate(in2) at (1,0);
\coordinate(m1L) at (-0.7,0.8);
\node[bbul](m2L) at (-0.7,2.2){};
\coordinate(m1R) at (0.7,0.8);
\coordinate(m2R) at (0.7,2.2);
\coordinate(out1) at (-1,3);
\coordinate(out2) at (1,3);
\draw[thick](m1R)--(m2L);
\draw[thick](m1L)--(m2R);
\draw[thick](in1)--(m1L)--(m2L)--(out1) (in2)--(m1R)--(m2R)--(out2);
\end{tikzpicture}
\ +\ %
\begin{tikzpicture}[scale=0.5,baseline=0.7cm]
\coordinate(in1) at (-1,0);
\coordinate(in2) at (1,0);
\coordinate(m1L) at (-0.7,0.8);
\coordinate(m2L) at (-0.7,2.2);
\coordinate(m1R) at (0.7,0.8);
\node[bbul](m2R) at (0.7,2.2){};
\coordinate(out1) at (-1,3);
\coordinate(out2) at (1,3);
\draw[thick](m1R)--(m2L);
\draw[thick](m1L)--(m2R);
\draw[thick](in1)--(m1L)--(m2L)--(out1) (in2)--(m1R)--(m2R)--(out2);
\end{tikzpicture}
\ +\ %
\begin{tikzpicture}[scale=0.5,baseline=0.7cm]
\coordinate(in1) at (-1,0);
\coordinate(in2) at (1,0);
\coordinate(m1L) at (-0.7,0.8);
\coordinate(m2L) at (-0.7,2.2);
\coordinate(m1R) at (0.7,0.8);
\coordinate(m2R) at (0.7,2.2);
\coordinate(out1) at (-1,3);
\coordinate(out2) at (1,3);
\draw[thick](m1R)--node[rbul,pos=0.25](a){}(m2L);
\draw[thick](m1L)--node[rbul,pos=0.25](b){}(m2R);
\draw[thick](in1)--(m1L)--(m2L)--(out1) (in2)--(m1R)--(m2R)--(out2);
\draw[red](a)--(b);
\end{tikzpicture}
\end{equation}

Suppose that $\Q\subset R$ and fix an associator $\Phi$. A \emph{quantization} of a Poisson Hopf algebra 
$$(H,m_0,\Delta_0,\eta,\epsilon,S_0,p)$$
 in $\C$ is a deformation of its product, coproduct, and antipode
$$m_{}=\sum_{n=0}^\infty \hbar^n m_n\qquad \Delta_{}=\sum_{n=0}^\infty \hbar^n \Delta_n\qquad S_{}=\sum_{n=0}^\infty \hbar^n S_n$$
s.t.\ $(H,m_{},\Delta_{},\eta,\epsilon,S_{})$ is a Hopf algebra in the BMC $\C_\hbar^\Phi$ and such that \eqref{bmp} holds.

\begin{example}
An example of Poisson Hopf algebras in nontrivial iBMCs is given by Manin quadruples \cite[Def.~4.1]{SV}, generalizing the standard link between Poisson Hopf algebras, Lie bialgebras and Manin triples \cite[Sec.~3]{drqg}.

A \emph{Manin quadruple} is a quadruple ($\dd,\h,\h^*\!,\g)$, where $\dd$ is a finite-dimensional (or suitably topological) quadratic Lie algebra and $\h,\h^*\!,\g\subset\dd$ are Lie subalgebras such that 
$$\dd=\h\oplus\h^*\oplus\g$$
as a vector space and such that
$$\h^\perp=\h\oplus\g,\quad \h^{*\perp}=\h^*\oplus\g.$$
As an example, $\dd$ can be semisimple, $\p,\p^*\subset\dd$ a pair of opposite parabolic subalgebras,  $\h$ and $\h^*$  their nilpotent radicals, and $\g=\p\cap\p^*$.

The Lie subalgebra $\g\subset\dd$ is  quadratic (i.e.\ the restriction of the quadratic form to $\g$ remains non-degenerate), and thus the category $\C=U\g$-mod is infinitesimally braided.
 $U\h$ is naturally a Poisson Hopf algebra in $\C^\text{op}$. 
See \cite{SV} for details and for the corresponding generalization of Poisson-Lie groups.
\end{example}

\subsection{Nerves of Poisson Hopf algebras}

For Poisson Hopf algebras we have the following analogue of Theorem \ref{thm:brnerve}.

\begin{thm}\label{thm:inerve}%[Nerves of Poisson Hopf algebras]
Let $\C$ be a $R$-iBMC. The category of Poisson Hopf algebras in $\C$ is equivalent to the category of i-braided lax monoidal functors
$$N\colon \ms{iCom}(R)\to\C$$
satisfying the nerve condition. The commutative Hopf algebra corresponding to $N$ is $H=N({\bullet\bullet})$, given by Proposition \ref{prop:comHopf} (using the inclusion $\ms{Com}\subset\ms{iCom}(R)$), and the Poisson bracket on $H$ comes from the Poisson bracket on ${\bullet\bullet}=\bullet\otimes\bullet$ (where $\bullet$ has $p=0$), i.e.
$$p=\begin{tikzpicture}[xscale=0.6,yscale=0.7,baseline=0.5cm]
\coordinate (diff) at (0.8,0);
\coordinate (dy) at (0,0.3);
\coordinate(A1) at (0,0);
\coordinate(B1) at ($(A1)+(diff)$);
\coordinate(A2) at (2,0);
\coordinate(B2) at ($(A2)+(diff)$);
\coordinate(A3) at ($(B1)+(0,2)$);
\coordinate(B3) at ($(A2)+(0,2)$);
\coordinate(ep) at (0.3,0);
\coordinate(A) at ($(A3)-(dy)$);
\coordinate(B) at ($(B3)-(dy)$);

\fill[black!10] ($(A1)-(ep)$)..controls +(0,1) and +(0,-1)..($(A)-(ep)$)--($(A3)-(ep)$)%
--($(B3)+(ep)$)--($(B)+(ep)$)..controls +(0,-1) and +(0,1)..($(B2)+(ep)$)%
--($(A2)-(ep)$)..controls +(0,.5) and +(0,.5)..($(B1)+(ep)$);

\draw[black!30,thick] ($(A1)-(ep)$)..controls +(0,1) and +(0,-1)..($(A)-(ep)$)--($(A3)-(ep)$)%
($(B3)+(ep)$)--($(B)+(ep)$)..controls +(0,-1) and +(0,1)..($(B2)+(ep)$)%
($(A2)-(ep)$)..controls +(0,.5) and +(0,.5)..($(B1)+(ep)$);

\node(A1) at (A1)[bul]{};
\node(B1) at (B1) [bul]{};
\node(A2) at (A2) [bul]{};
\node(B2) at (B2) [bul]{};
\node(A3) at (A3) [bul]{};
\node(B3) at (B3) [bul]{};

\draw (A2)..controls +(0,1) and +(0,-1)..node(c1)[near start, rbul]{}(A);
\draw (B1)..controls +(0,1) and +(0,-1)..node(c2)[near start, rbul]{}(B);
\draw (A1)..controls +(0,1) and +(0,-1)..(A);
\draw (B2)..controls +(0,1) and +(0,-1)..(B);
\draw (B)--(B3);
\draw(A)--(A3);
\draw[red](c1)--(c2);
\end{tikzpicture}$$
\end{thm}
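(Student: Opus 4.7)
The plan is to follow the proof strategies of Proposition \ref{prop:comHopf} and Theorem \ref{thm:brnerve}, augmenting them with the extra chord structure of $\ms{iCom}(R)$, which is to encode the Poisson bracket on $H$.

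For the \emph{forward direction}, suppose $N\colon \ms{iCom}(R) \to \C$ is i-braided lax monoidal and satisfies the nerve condition. Restricting $N$ along the inclusion $\ms{Com} \subset \ms{iCom}(R)$ and invoking Proposition \ref{prop:comHopf} equips $H \vcentcolon= N({\bullet\bullet})$ with a commutative Hopf algebra structure. To obtain the Poisson bracket, note that $\bullet \in \ms{iCom}(R)$ is strongly commutative and hence a Poisson algebra with zero bracket; consequently ${\bullet\bullet} = \bullet \otimes \bullet$ carries the fusion Poisson bracket \eqref{pfusion}, which reduces to just the chord term. Since i-braided lax monoidal functors preserve Poisson algebras (as already observed in the paper), $H$ inherits a Poisson bracket matching the formula in the theorem statement. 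The Poisson Hopf compatibility \eqref{pHA} then comes from applying $N$ to the coproduct morphism $\uln\Delta\colon \bullet\bullet \to \bullet^3$ (composed with the nerve isomorphism $N(\bullet^3) \cong H \otimes H$): one checks directly, using strong commutativity of $\bullet$, that $\uln\Delta$ is a morphism of Poisson algebras in $\ms{iCom}(R)$ with the fusion bracket on the target.

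For the \emph{backward direction}, given a Poisson Hopf algebra $(H, m_0, \Delta_0, \eta, \epsilon, S_0, p)$, we extend the commutative Hopf nerve $N_H\colon \ms{Com} \to \C$ to an i-braided lax monoidal functor $N\colon \ms{iCom}(R) \to \C$ satisfying the nerve condition. On objects set $N(\bullet^n) = H^{\otimes(n-1)}$; on chord-free morphisms $N$ agrees with $N_H$; and on a chord generator $t_{ij}$ of $\bullet^n$, we define $N(t_{ij})$ by the natural extension of the graphical algorithm: spread the $i$-th and $j$-th strands using iterated coproducts, insert $p$ on the tensor factors at the chord endpoints, and multiply back into $H^{\otimes(n-1)}$. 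This definition is essentially forced by i-braidedness of $N$ (which, given $N(\bullet) \cong 1_\C$, pins down how chords interact with coherence) and by the graphical algorithm for $N_H$.

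The main technical task is to verify that this $N$ respects the defining relations of $\ms{iCom}(R)$. Strong commutativity $\uln m \circ t = 0$ holds because $N(\bullet) \cong 1_\C$ absorbs the chord. The disjoint-support commutation $[t_{ij}, t_{kl}] = 0$ is immediate once the strands are spread apart: the two chords act on disjoint tensor factors. The \emph{hard part} is the infinitesimal $4$-term relation $[t_{ij} + t_{ik}, t_{jk}] = 0$, which after unraveling reduces to an identity combining the modified Jacobi \eqref{Jacobi}, the biderivation property of $p$, coassociativity of $\Delta$, and the Poisson Hopf compatibility \eqref{pHA}; this is analogous to the core combinatorial step in the proof of Theorem \ref{thm:brnerve}, using infinitesimal rather than braided moves. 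Once these relations and the i-braided lax monoidal axioms are in hand, the two constructions are easily seen to be mutually inverse: the forward construction applied to the $N$ built here recovers the original Poisson Hopf algebra because $N({\bullet\bullet}) = H$ carries the prescribed structure, while the backward construction is uniquely determined on chord generators by $p$ and the nerve condition.
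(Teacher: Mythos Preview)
Your forward direction is fine and matches the paper. The backward direction, however, has a real gap: you never actually say what $N(t_{ij})$ is. The phrase ``spread the $i$-th and $j$-th strands, insert $p$ at the chord endpoints, multiply back'' does not specify a morphism, because in $N(\bullet^n)=H^{\otimes(n-1)}$ the tensor factors sit \emph{between} bullets, not at them; there is no ``tensor factor at position $i$'' on which to insert $p$. Nor is the formula forced by i-braidedness: that axiom only constrains $N(t_{\bullet^a,\bullet^b})$, i.e.\ block sums $\sum_{i\le a<j}t_{ij}$, not individual chords. So the heart of the construction---and with it the verification of the $4$-term relation---is missing.

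The paper fills this gap with a genuinely non-obvious idea. Rather than defining $N_H$ on chords directly, it first builds an \emph{auxiliary} symmetric strict monoidal functor $F_H\colon\ms{iCom}(R)\to\C$ with $F_H(\bullet^n)=H^{\otimes n}$ (one more factor than $N_H$!), sending the generating chord $t_{\bullet,\bullet}$ to an explicit endomorphism
\[
\tau = r + r^{\mathit{op}} + t_{H,H},\qquad r = -\,(m\otimes\id)\circ(\id\otimes S\otimes\id)\circ(p\otimes\id)\circ(\id\otimes\Delta).
\]
The Drinfeld--Kohno relations for the $\tau_{ij}$ are then checked by showing that $\tau$ is an $H$-comodule map (this is where \eqref{pHA} enters) and that $[\tau_{12}+\tau_{13},\tau_{23}]$ splits into three pieces handled respectively by that comodule property, by an identity derived from the modified Jacobi \eqref{Jacobi}, and by naturality of $t^\C$. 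Only afterwards is $N_H$ obtained from $F_H$ as $H$-coinvariants, via explicit morphisms $\iota_n\colon H^{\otimes(n-1)}\to H^{\otimes n}$ (built from $\Delta$ and $S$) and retractions $\kappa_n$, setting $N_H(\phi)=\kappa_q\circ F_H(\phi)\circ\iota_p$. The passage through $F_H$ is what makes the relation-checking tractable; a direct attack on $H^{\otimes(n-1)}$ would have to rediscover the same $\tau$ and the same $\iota_n$, and your sketch does neither.
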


The proof can be found in \S\ref{sec:pfi}.

\subsection{The ``universal quantization functor''}

\begin{prop}
There is a braided strong monoidal functor 
$$U_\Phi\colon \ms{BrCom}\to\ms{iCom}(\Q)_\hbar^\Phi$$
 whose reduction mod $\hbar$ is the projection $\ms{BrCom}\to\ms{Com}$.
\end{prop}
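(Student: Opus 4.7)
The plan is to invoke the universal property of the braided prop $\ms{BrCom}$: a braided strong monoidal functor from $\ms{BrCom}$ to any BMC $\mc D$ is equivalent data to a commutative algebra object $A \in \mc D$ (with $A$ the image of $\bullet$). It therefore suffices to equip the object $\bullet \in \ms{iCom}(\Q)_\hbar^\Phi$ with the structure of a commutative algebra, and the natural candidate is the one already provided by the generators $\uln m\colon \bullet\otimes\bullet\to\bullet$ and $\uln\eta\colon 1\to\bullet$ of $\ms{iCom}(\Q)$ itself.

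The work is then to verify the three algebra axioms in the \emph{deformed} structure. For commutativity one needs $\uln m \circ \beta^{\mathit{new}}_{\bullet,\bullet} = \uln m$; writing $\beta^{\mathit{new}}_{\bullet,\bullet} = \sigma_{\bullet,\bullet} \circ \exp(\hbar\, t_{\bullet,\bullet}/2)$ and using ordinary commutativity $\uln m \circ \sigma = \uln m$, this reduces to $\uln m \circ \exp(\hbar\, t/2) = \uln m$, which follows from the defining strong-commutativity relation $\uln m \circ t_{\bullet,\bullet} = 0$ killing every positive power of $\hbar$. For associativity one has to check $m^{(3)} \circ \Phi(\hbar\, t_{12}, \hbar\, t_{23}) = m^{(3)}$, where $m^{(3)} = \uln m \circ (\uln m \otimes \id) = \uln m \circ (\id \otimes \uln m)$. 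This is the only step with real content: every non-constant noncommutative monomial in $\Phi$ begins, read outermost-first, with either $t_{12}$ or $t_{23}$, and $m^{(3)} \circ t_{12} = m^{(3)} \circ t_{23} = 0$ by strong commutativity, so every such correction is annihilated. Unitality is untouched by the deformation.

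Together these checks exhibit $\bullet$ as a commutative algebra in $\ms{iCom}(\Q)_\hbar^\Phi$, which by the universal property yields the desired braided strong monoidal functor $U_\Phi$. Reducing mod $\hbar$ sends $\beta^{\mathit{new}} \mapsto \sigma$ and $\gamma^{\mathit{new}} \mapsto \gamma$, so $\ms{iCom}(\Q)_\hbar^\Phi$ collapses to $\ms{iCom}(\Q)$ with its symmetric monoidal structure; the image of $U_\Phi \bmod \hbar$ is generated by $\uln m$, $\uln\eta$ and symmetries, hence lies in the subcategory $\ms{Com} \subset \ms{iCom}(\Q)$, and the uniqueness part of the universal property forces $U_\Phi \bmod \hbar$ to be the canonical projection $\ms{BrCom} \to \ms{Com}$ composed with this inclusion. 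The main (and in fact only nontrivial) obstacle is the associativity check: once one observes that strong commutativity makes $m^{(3)}$ blind to any chord applied from below, the a~priori complicated contribution of the Drinfeld associator collapses to zero.
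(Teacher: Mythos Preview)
Your proof is correct and follows the same approach as the paper. The paper's proof is a two-line invocation of a general fact stated earlier in the text (that a strongly commutative algebra in an iBMC $\C$ remains, with the same product and unit, a commutative algebra in $\C_\hbar^\Phi$); you have simply unpacked that fact and verified it explicitly for $\bullet\in\ms{iCom}(\Q)$, checking commutativity, associativity, and unitality against the deformed braiding and associator by hand.
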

\begin{proof}
	The algebra $\bullet\in\ms{iCom}(\Q)$ is strongly commutative. Thus (Proposition \ref{prop:strong-comm}), $\bullet$ remains, with the same product and unit, a braided commutative algebra in $\ms{iCom}(\Q)_\hbar^\Phi$. Since $\mathsf{BrCom}$ is the prop of braided commutative algebras, the algebra $\bullet$ gives us a braided strong monoidal functor $\ms{BrCom}\to\ms{iCom}(\Q)_\hbar^\Phi$ sending $\bullet$ to $\bullet$. Its reduction mod $\hbar$ has clearly the required property.
\end{proof}
\begin{rem}
The functor $U_\Phi$ is not unique, since it depends on a choice of parenthesizations, as explained in Remark~\ref{rem:parfirst}. Alternatively, one can introduce the parenthesizations to $\ms{BrCom}$ and get a unique $U_\Phi$, see Remark \ref{rem:Pa}.
\end{rem}

\subsection{Quantization of Poisson Hopf algebras}
Poisson Hopf algebras can be quantized as follows.

\begin{thm}\label{thm:quantPHA}
Let $H$ be a Poisson Hopf algebra in an $R$-iBMC $\C$, and let 
$$N\colon \ms{iCom}(R)\to\C$$
be its nerve. Suppose that $\Q\subset R$, and let $\Phi$ be a Drinfeld associator. Then the composed braided lax monoidal functor
$$\ms{BrCom}\xrightarrow{U_\Phi}\ms{iCom}(\Q)_\hbar^\Phi\subset\ms{iCom}(R)_\hbar^\Phi\xrightarrow{N_\hbar}\C_\hbar^\Phi$$
satisfies the nerve condition. The resulting Hopf algebra structure on $H\in\C_\hbar^\Phi$ (cf.\ Theorem \ref{thm:brnerve}) is a quantization of the Poisson Hopf algebra $H\in\C$.
\end{thm}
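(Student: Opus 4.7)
The plan is to verify, in sequence, that the composed functor $F := N_\hbar \circ U_\Phi$ is braided lax monoidal, that it satisfies the nerve condition, and that the Hopf algebra structure on $H$ that Theorem \ref{thm:brnerve} then produces quantizes the given Poisson Hopf algebra.

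Braided lax monoidality is immediate from what is already available: $U_\Phi$ is braided strong monoidal by the preceding proposition, the scalar extension $\ms{iCom}(\Q)_\hbar^\Phi\hookrightarrow\ms{iCom}(R)_\hbar^\Phi$ is strict braided monoidal, and Theorem \ref{thm:DrCat} upgrades the i-braided lax monoidal functor $N$ from Theorem \ref{thm:inerve} to a braided lax monoidal $N_\hbar$ with the same coherence morphisms.

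For the nerve condition, I reduce modulo $\hbar$. By construction $U_\Phi\bmod\hbar$ is the projection $\ms{BrCom}\to\ms{Com}$ and $N_\hbar\bmod\hbar=N$, so $F\bmod\hbar$ is the restriction of $N$ to $\ms{Com}$. By Proposition \ref{prop:comHopf} this restriction is the nerve of the commutative Hopf algebra $(H,m_0,\Delta_0,\eta,\epsilon,S_0)$ underlying $H$ and hence satisfies the nerve condition. As morphism spaces in $\C_\hbar^\Phi$ are $\Hom_\C(-,-)[\![\hbar]\!]$ and a formal power series is invertible iff its $\hbar=0$ reduction is, the Segal maps \eqref{Segal} and the unit-related morphisms for $F$ are isomorphisms. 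Theorem \ref{thm:brnerve} then produces a Hopf algebra in $\C_\hbar^\Phi$ on the object $F({\bullet\bullet})=H$ whose structure reduces, modulo $\hbar$, to the original $(m_0,\Delta_0,\eta,\epsilon,S_0)$.

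It remains to verify \eqref{bmp}. According to Theorem \ref{thm:brnerve} the new product $m$ is $F$ applied to the fusion product $\uln m_{\bullet\bullet}=(\uln m\otimes\uln m)\circ(\id\otimes\beta\otimes\id)$ of $\bullet\bullet\in\ms{BrCom}$, post-composed with the coherence morphism $H\otimes H\to F(\bullet^4)$. In $\ms{iCom}(\Q)_\hbar^\Phi$ the braiding expands as $\beta=\sigma\circ(1+\tfrac{\hbar}{2}t)+O(\hbar^2)$, so
$$U_\Phi(\uln m_{\bullet\bullet})=m_{\bullet\bullet,0}+\tfrac{\hbar}{2}(\uln m\otimes\uln m)\circ(\id\otimes\sigma t\otimes\id)+O(\hbar^2).$$
Formula \eqref{pfusion} (with $p_\bullet=0$) identifies the chord term as the Poisson bracket $p_{\bullet\bullet}$ on $\bullet\bullet\in\ms{iCom}(R)$, and then Theorem \ref{thm:inerve} identifies $N(p_{\bullet\bullet})$, composed with the coherence morphisms, with the Poisson bracket $p$ on $H$. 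Hence $m=m_0+\tfrac{\hbar}{2}p+O(\hbar^2)$. Using $\beta^{-1}=\sigma-\tfrac{\hbar}{2}t\sigma+O(\hbar^2)$ together with $\sigma_{H,H}\circ t_{H,H}=t_{H,H}\circ\sigma_{H,H}$ (from symmetry of $t$), $m_0\circ\sigma=m_0$ (commutativity), and the modified skew-symmetry $p+p\sigma=m_0\circ t$ of \eqref{skewmod}, one finds
$$m-m\circ\beta^{-1}=\hbar\bigl(\tfrac{1}{2}p-\tfrac{1}{2}p\sigma+\tfrac{1}{2}m_0\circ t\bigr)+O(\hbar^2)=\hbar\,p+O(\hbar^2),$$
which is \eqref{bmp}.

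The main obstacle is the last paragraph's first-order bookkeeping: threading the deformed braiding through the fusion product $\uln m_{\bullet\bullet}$ and through the lax coherence morphisms of $N_\hbar$, and then invoking \eqref{skewmod} to collapse the three first-order contributions into $p$. The conceptual content is packaged in Theorems \ref{thm:inerve} and \ref{thm:DrCat}; the residual combinatorics of chords is routine but must be kept straight.
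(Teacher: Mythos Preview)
Your proof is correct and follows essentially the same structure as the paper's: verify the nerve condition by reducing modulo $\hbar$ (invoking that invertibility in $\Hom_\C[\![\hbar]\!]$ is detected at $\hbar=0$), then check \eqref{bmp} by a first-order computation. The only difference is organizational. The paper computes $m-m\circ\beta^{-1}_{H,H}$ directly as the difference of the two pictures (over- vs.\ undercrossing of the inner strands), which in $\ms{iCom}(\Q)_\hbar^\Phi$ is immediately $\hbar$ times the chord diagram, hence $\hbar p$ after applying $N$; no appeal to \eqref{skewmod} is needed. You instead first expand $m=m_0+\tfrac{\hbar}{2}p+O(\hbar^2)$ and then use \eqref{skewmod} to reduce $\tfrac{1}{2}(p-p\sigma+m_0t)$ to $p$. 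Both are valid; the paper's route is a bit slicker because the crossing-flip trick gives $\hbar p$ in one step, while yours trades that for an elementary algebraic identity. One small point worth making explicit in your version: the coherence morphisms of $U_\Phi$ (which involve $\Phi$) are $\id+O(\hbar^2)$, so they do not contribute to the first-order term---you implicitly use this when writing $m=m_0+\tfrac{\hbar}{2}p+O(\hbar^2)$.
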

\begin{proof}
To see that the composed functor $\tilde N$ satisfies the nerve condition we reduce it mod $\hbar$, and get the composition
$$\ms{BrCom}\xrightarrow{\mathrm{projection}}\ms{Com}\xrightarrow{N}\C$$
which satisfies the nerve condition because $N$ does. Since $\tilde N$ is its $\hbar$-deformation, it also satisfies the nerve condition (which says that some morphisms should be invertible).

We need to check that we get a quantization of the Poisson Hopf algebra. If $m_{}$ is the product on $H\in\C_\hbar^\Phi$, we have

$$m_{}-m_{}\circ\beta^{-1}_{H,H}=
\begin{tikzpicture}[xscale=0.6,yscale=0.7,baseline=0.5cm]
\coordinate (diff) at (0.8,0);
\coordinate (dy) at (0,0.3);
\coordinate(A1) at (0,0);
\coordinate(B1) at ($(A1)+(diff)$);
\coordinate(A2) at (2,0);
\coordinate(B2) at ($(A2)+(diff)$);
\coordinate(A3) at ($(B1)+(0,2)$);
\coordinate(B3) at ($(A2)+(0,2)$);
\coordinate(ep) at (0.3,0);
\coordinate(A) at ($(A3)-(dy)$);
\coordinate(B) at ($(B3)-(dy)$);

\fill[black!10] ($(A1)-(ep)$)..controls +(0,1) and +(0,-1)..($(A)-(ep)$)--($(A3)-(ep)$)%
--($(B3)+(ep)$)--($(B)+(ep)$)..controls +(0,-1) and +(0,1)..($(B2)+(ep)$)%
--($(A2)-(ep)$)..controls +(0,.5) and +(0,.5)..($(B1)+(ep)$);

\draw[black!30,thick] ($(A1)-(ep)$)..controls +(0,1) and +(0,-1)..($(A)-(ep)$)--($(A3)-(ep)$)%
($(B3)+(ep)$)--($(B)+(ep)$)..controls +(0,-1) and +(0,1)..($(B2)+(ep)$)%
($(A2)-(ep)$)..controls +(0,.5) and +(0,.5)..($(B1)+(ep)$);

\node(A1) at (A1)[bul]{};
\node(B1) at (B1) [bul]{};
\node(A2) at (A2) [bul]{};
\node(B2) at (B2) [bul]{};
\node(A3) at (A3) [bul]{};
\node(B3) at (B3) [bul]{};

\draw[line width=1ex,black!10]  (A2)..controls +(0,1) and +(0,-1)..(A);
\draw (A2)..controls +(0,1) and +(0,-1)..(A);
\draw[line width=1ex,black!10] (B1)..controls +(0,1) and +(0,-1)..(B);
\draw (B1)..controls +(0,1) and +(0,-1)..(B);
\draw (A1)..controls +(0,1) and +(0,-1)..(A);
\draw (B2)..controls +(0,1) and +(0,-1)..(B);
\draw (B)--(B3);
\draw(A)--(A3);
\end{tikzpicture}
-
\begin{tikzpicture}[xscale=0.6,yscale=0.7,baseline=0.5cm]
\coordinate (diff) at (0.8,0);
\coordinate (dy) at (0,0.3);
\coordinate(A1) at (0,0);
\coordinate(B1) at ($(A1)+(diff)$);
\coordinate(A2) at (2,0);
\coordinate(B2) at ($(A2)+(diff)$);
\coordinate(A3) at ($(B1)+(0,2)$);
\coordinate(B3) at ($(A2)+(0,2)$);
\coordinate(ep) at (0.3,0);
\coordinate(A) at ($(A3)-(dy)$);
\coordinate(B) at ($(B3)-(dy)$);

\fill[black!10] ($(A1)-(ep)$)..controls +(0,1) and +(0,-1)..($(A)-(ep)$)--($(A3)-(ep)$)%
--($(B3)+(ep)$)--($(B)+(ep)$)..controls +(0,-1) and +(0,1)..($(B2)+(ep)$)%
--($(A2)-(ep)$)..controls +(0,.5) and +(0,.5)..($(B1)+(ep)$);

\draw[black!30,thick] ($(A1)-(ep)$)..controls +(0,1) and +(0,-1)..($(A)-(ep)$)--($(A3)-(ep)$)%
($(B3)+(ep)$)--($(B)+(ep)$)..controls +(0,-1) and +(0,1)..($(B2)+(ep)$)%
($(A2)-(ep)$)..controls +(0,.5) and +(0,.5)..($(B1)+(ep)$);

\node(A1) at (A1)[bul]{};
\node(B1) at (B1) [bul]{};
\node(A2) at (A2) [bul]{};
\node(B2) at (B2) [bul]{};
\node(A3) at (A3) [bul]{};
\node(B3) at (B3) [bul]{};

\draw[line width=1ex,black!10] (B1)..controls +(0,1) and +(0,-1)..(B);
\draw (B1)..controls +(0,1) and +(0,-1)..(B);
\draw[line width=1ex,black!10]  (A2)..controls +(0,1) and +(0,-1)..(A);
\draw (A2)..controls +(0,1) and +(0,-1)..(A);
\draw (A1)..controls +(0,1) and +(0,-1)..(A);
\draw (B2)..controls +(0,1) and +(0,-1)..(B);
\draw (B)--(B3);
\draw(A)--(A3);
\end{tikzpicture}
=
\hbar
\begin{tikzpicture}[xscale=0.6,yscale=0.7,baseline=0.5cm]
\coordinate (diff) at (0.8,0);
\coordinate (dy) at (0,0.3);
\coordinate(A1) at (0,0);
\coordinate(B1) at ($(A1)+(diff)$);
\coordinate(A2) at (2,0);
\coordinate(B2) at ($(A2)+(diff)$);
\coordinate(A3) at ($(B1)+(0,2)$);
\coordinate(B3) at ($(A2)+(0,2)$);
\coordinate(ep) at (0.3,0);
\coordinate(A) at ($(A3)-(dy)$);
\coordinate(B) at ($(B3)-(dy)$);

\fill[black!10] ($(A1)-(ep)$)..controls +(0,1) and +(0,-1)..($(A)-(ep)$)--($(A3)-(ep)$)%
--($(B3)+(ep)$)--($(B)+(ep)$)..controls +(0,-1) and +(0,1)..($(B2)+(ep)$)%
--($(A2)-(ep)$)..controls +(0,.5) and +(0,.5)..($(B1)+(ep)$);

\draw[black!30,thick] ($(A1)-(ep)$)..controls +(0,1) and +(0,-1)..($(A)-(ep)$)--($(A3)-(ep)$)%
($(B3)+(ep)$)--($(B)+(ep)$)..controls +(0,-1) and +(0,1)..($(B2)+(ep)$)%
($(A2)-(ep)$)..controls +(0,.5) and +(0,.5)..($(B1)+(ep)$);

\node(A1) at (A1)[bul]{};
\node(B1) at (B1) [bul]{};
\node(A2) at (A2) [bul]{};
\node(B2) at (B2) [bul]{};
\node(A3) at (A3) [bul]{};
\node(B3) at (B3) [bul]{};

\draw (A2)..controls +(0,1) and +(0,-1)..node(c1)[near start, rbul]{}(A);
\draw (B1)..controls +(0,1) and +(0,-1)..node(c2)[near start, rbul]{}(B);
\draw (A1)..controls +(0,1) and +(0,-1)..(A);
\draw (B2)..controls +(0,1) and +(0,-1)..(B);
\draw (B)--(B3);
\draw(A)--(A3);
\draw[red](c1)--(c2);
\end{tikzpicture}
+O(\hbar^2)
$$
and thus indeed $m_{}-m_{}\circ\beta^{-1}_{H,H}=\hbar\,p+O(\hbar^2)$ where $p$ is the Poisson bracket of the Poisson Hopf algebra $H\in\C$.
\end{proof}

If $t_{X,Y}=0$ for all objects $X,Y\in\C$ then the previous theorem gives us a quantization of $H$ to a Hopf algebra in the SMC $\C_\hbar$.  %(the construction thus uses an associator, but the category where the Hopf algebra lives does not depend on it).

\begin{rem}\label{rem:Pa}
\newcommand{\Pa}{\ms{Pa}}
To see how the quantization depends on choices it is better to use parenthesized versions of the props $\ms{BrCom}$ and $\ms{iCom}$.

 Following \cite[Sec.~2.1]{DBN}, if $\mc P$ is a (ordinary or braided or $R$-i-braided) prop, let $\Pa\mc P$ be its parenthesized version.  It is a (symmetric or braided or $R$-i-braided) non-strict monoidal category equivalent to $\mc P$: its objects are fully parenthesised words in $\bullet$ (i.e.\  expressions built out of $\bullet$ using a non-associative product),  and its morphisms are the morphisms of $\mc P$ forgetting the parenthesization. A typical morphism in $\Pa\ms{BrCom}$ looks as 
$$
\begin{tikzpicture}[xscale=0.6,yscale=0.8]
\node[bul](a1) at (0,0){}; \node[left] at (a1) {$(\!$};
\node[bul](a2) at (1,0){}; \node[left] at (a2) {$(\!$};
\node[bul](a3) at (2,0){}; \node[right] at (a3) {$\!))$};
\node[bul](a4) at (3,0){}; \node[left] at (a4) {$(\!$};
\node[bul](a5) at (4,0){}; \node[right] at (a5) {$\!)$};

\node[bul](b1) at (0.5,2){};
\node[bul](b2) at (1.5,2){}; \node[left] at (b2) {$((\!$};
\node[bul](b3) at (2.5,2){}; \node[right] at (b3) {$\!)$};
\node[bul](b4) at (3.5,2){}; \node[right] at (b4) {$\!)$};

\draw (a2)to[out=90,in=-90](b1);
\draw[line width=1ex,white](a1)to[out=90,in=-90](b2);
\draw(a1)to[out=90,in=-90](b2);
\draw (a3)to[out=90,in=-90](b2);
\draw (a5)to[out=90,in=-90](b2);
\draw[line width=1ex,white] (a4)to[out=90,in=-90](b4);
\draw (a4)to[out=90,in=-90](b4);
\end{tikzpicture}
$$

Theorems \ref{thm:brnerve} and \ref{thm:inerve} remain true with $\Pa\ms{BrCom}$ and $\Pa\ms{iCom}$ in place of $\ms{BrCom}$ and $\ms{iCom}$. % (both the nerve condition and the definition of $\Delta$ require a choice of a parenthesization, but the result is independent of this choice).
 The advantage of this setup is that now we have a \emph{unique} strict braided monoidal functor $\Pa\, U_\Phi\colon\Pa\ms{BrCom}\to\Pa\ms{iCom}^\Phi_\hbar$ given by the commutative algebra structure on $\bullet$ and thus the resulting quantization of Poisson Hopf algebras depends only on the associator $\Phi$ and not on the choice of $U_\Phi$ (which depends on a choice of a parenthesization). A similar argument also gives us an action of the Grothendieck-Teichmueller Lie algebra $\mf{grt}$ on the prop of Poisson Hopf algebras by derivations.

\end{rem}

\subsection{Cocommutative coalgebra enrichment}
Let us now consider categories enriched over the category of cocommutative $R$-coalgebras; we shall call them \emph{$R$-cc enriched categories}. 

In an \emph{$R$-cc enriched BMC} the tensor product of morphisms has to be compatible with the enrichment, and the braiding and the associativity and unit constraints
%the associativity morphisms $\gamma_{X,Y,Z}$, the braiding morphisms $\beta_{X,Y}$, and the unit morphisms
 are required to be grouplike, 
i.e.\ to be $R$-cc enriched natural transformations.  

An \emph{$R$-cc enriched iBMC} is an $R$-cc enriched SMC with an infinitesimal braiding such that $t_{X,Y}$ is primitive for every $X,Y$ (which is equivalent to the deformed braiding $\sigma_{X,Y}\circ(1+\epsilon\,t_{X,Y})$ being grouplike).

\begin{example}
The $R$-i-braided prop $\ms{iCom}(R)$ is $R$-cc enriched, if we demand its generating morphisms $\uln m\colon {\bullet\bullet}\to\bullet$ and $\uln\eta\colon 1\to\bullet$ to be grouplike.
\end{example}

\begin{example}
Let $\ms{iPoissHopf}(R)$ be the $R$-i-braided prop of Poisson Hopf algebras. It is naturally $R$-cc enriched, with all the generating morphisms (product, coproduct, unit, counit, antipode) being grouplike, except for the Poisson bracket $\uln p$ which satisfies $\Delta\uln p=\uln p\otimes\uln m + \uln m\otimes\uln p$.
\end{example}

\begin{example}
Let $\ms{PoissHopf}(R)$ be the $R$-linear prop of Poisson Hopf algebras. It is naturally $R$-cc enriched, in the same way as $\ms{iPoissHopf}(R)$.
\end{example}

The $R$-cc enrichment of the prop $\ms{PoissHopf}(R)$ is equivalent to the fact that if $H_1$ and $H_2$ are Poisson Hopf algebras in an $R$-linear SMC $\C$, then so is $H_1\otimes H_2$. Let us explain it in a way which works also for braided and i-braided props.

If $\C_1$ and $\C_2$ are $R$-linear categories, let $\C_1\boxtimes_R\C_2$ be the $R$-linear category with 
$$\on{Ob}(\C_1\boxtimes_R\C_2)=\on{Ob}(\C_1)\times\on{Ob}(\C_2),$$
% and with 
$$\Hom_{\C_1\boxtimes_R\C_2}\bigl((X_1,X_2),(Y_1,Y_2)\bigr)=\Hom_{\C_1}(X_1,Y_1)\otimes_R\Hom_{\C_2}(X_2,Y_2).$$
%In an $R$-linear monoidal category $\C$ the tensor product is  an $R$-linear functor
%$$\otimes\colon \C\boxtimes_R\C\to\C.$$

If $\C_1$ and $\C_2$ are  braided $R$-linear monoidal then so is $\C_1\boxtimes_R\C_2$: $\otimes$ is given componentwise, i.e.\ $(X_1,X_2)\otimes(Y_1,Y_2)=(X_1\otimes Y_1,X_2\otimes Y_2)$ and similarly for morphisms, $1_{\C_1\boxtimes_R\C_2}=(1_{\C_1},1_{\C_2})$, and we have 
$$\gamma^{\C_1\boxtimes_R\C_2}=\gamma^{\C_1}\otimes_R\gamma^{\C_2}\quad\text{and}\quad\beta^{\C_1\boxtimes_R\C_2}=\beta^{\C_1}\otimes_R\beta^{\C_2}$$ 
and likewise for the unit morphisms.
Moreover, if $\C_1$ and $\C_2$ are $R$-iBMCs then so is $\C_1\boxtimes_R\C_2$, with the infinitesimal braiding
$$t^{\C_1\boxtimes_R\C_2}=t^{\C_1}\otimes_R\id+\id\mathbin{\otimes_R} t^{\C_2}.$$

If $\C$ is an $R$-cc enriched BMC or iBMC then the functor 
$$\Delta_\C\colon \C\to\C\boxtimes_R\C,$$
 given on objects by $\Delta_\C(X)=(X,X)$ and on morphisms by the coalgebra coproduct, is braided (or infinitesimally braided) strictly monoidal. 
 
 In particular, if $\mc P$ is an $R$-cc enriched prop (or braided prop, or i-braided prop) and if $\mc P\to\C_{1,2}$ are two $R$-linear symmetric (or braided or i-braided) strong monoidal functors, then by composition with $\Delta_\mc P\colon \mc P\to\mc P\boxtimes_R\mc P$ we get a symmetric (or braided or i-braided) strong monoidal functor $\mc P\to\C_1\boxtimes_R\C_2$. In other words, if $A_1\in\mc C_1$ and $A_2\in\mc P_2$ are $\mc P$-algebras, then so is $(A_1,A_2)\in\C_1\boxtimes_R\C_2$.
 
Finally, if $\C_1=\C_2=\C$ is a SMC, the tensor product functor $\otimes\colon\C\boxtimes_R\C\to\C$ is a symmetric strong monoidal functor, and so $A_1\otimes A_2\in\C$, being the $\otimes$-image of the $\mc P$-algebra $(A_1,A_2)$, is a $\mc P$-algebra too.

%\begin{rem}
% If an $R$-i-braided prop $\mc P$ is $R$-cc enriched then the map \eqref{DKprop} is a morphism of cocommutative $R$-bialgebras, where $t_{ij}$'s are primitive elements and permutations grouplike elements of $S_n\ltimes\mc A_n$.
%\end{rem}
% 

% ---------------------------------------------------
\medskip
Let us conclude with the compatibility of Drinfeld's construction of the BMC $\C_\hbar^\Phi$ with the product of categories $\boxtimes_R$ and with $R$-cc enrichment.
Drinfeld \cite[Thm.~A$^{\prime\prime}$]{drass} showed that $\Phi$ can be chosen grouplike (wrt.\ the coproduct where $x$ and $y$ are primitive); from now on we shall assume that $\Phi$ is grouplike. This implies that 
$$(\C_1\boxtimes_R \C_2)_\hbar^\Phi=(\C_1)_\hbar^\Phi\boxtimes_{R[\![\hbar]\!]} (\C_2)_\hbar^\Phi\quad\text{(as $R[\![\hbar]\!]$-linear BMCs)}$$
 and also that if $\C$ is an $R$-cc enriched iBMC then $\C_\hbar^\Phi$ is an $R[\![\hbar]\!]$-cc enriched BMC. %(By $\otimes_{R[\![\hbar]\!]}$, we mean the completed tensor product.)
(Here we abuse the notation slightly -  our $R[\![\hbar]\!]$-modules are of the form $M[\![\hbar]\!]$ for some $R$-module $M$, with the tensor product $M[\![\hbar]\!]\otimes_{{R[\![\hbar]\!]}}{M'[\![\hbar]\!]}\vcentcolon=(M\otimes_R M')[\![\hbar]\!]$.)

\subsection{Quantization in terms of props and compatibility with products}

If $\mc D$ an $R$-cc enriched BMC, let ${\mc D^\mathit{gl}}\subset\mc D$ be its sub-BMC with all the objects but only the grouplike morphisms. If $\mc C$ is a BMC we shall say that a braided monoidal functor $F\colon \C\to\mc D$ is \emph{grouplike} if its image and coherence morphisms are in $\mc D^\mathit{gl}$.

\begin{example}
The braided strong monoidal functor $U_\Phi\colon \ms{BrCom}\to\ms{iCom}(\Q)_\hbar^\Phi$ is grouplike. This is because the commutative algebra $\bullet\in\ms{iCom}(\Q)_\hbar^\Phi$ is a commutative algebra in the subcategory $\bigl(\ms{iCom}(\Q)_\hbar^\Phi\bigr){}^\mathit{gl}_{\phantom{x}}\!,$ as $\uln{m}$ and $\uln{\eta}$ are grouplike morphisms.

\end{example}

Let us now recast Theorem \ref{thm:quantPHA} into the language of props. Let $\ms{Hopf}$ be the prop of Hopf algebras and $\ms{BrHopf}$  the braided prop of Hopf algebras.

\begin{thm}
There is a grouplike braided strong monoidal functor
$$Q_\Phi^\mathit{br}\colon \ms{BrHopf}\to\ms{iPoissHopf}(\Q)_\hbar^\Phi, \quad Q_\Phi^\mathit{br}(\bullet^n)=\bullet^n\ (\forall n)$$
such that the resulting Hopf algebra structure on $\bullet\in \ms{iPoissHopf}(\Q)_\hbar^\Phi$  is a quantization of the Poisson Hopf algebra structure on $\bullet\in \ms{iPoissHopf}(\Q)$,
and a grouplike morphism of props
$$Q_\Phi\colon \ms{Hopf}\to\ms{PoissHopf}(\Q)_\hbar$$
with the same property.
\end{thm}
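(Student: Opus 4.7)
The plan is to reduce the statement to Theorem~\ref{thm:quantPHA} applied to the tautological universal Poisson Hopf algebras sitting inside the props $\ms{iPoissHopf}(\Q)$ and $\ms{PoissHopf}(\Q)$ themselves, and then to invoke the universal properties of $\ms{BrHopf}$ and $\ms{Hopf}$.

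For $Q_\Phi^{\mathit{br}}$: the object $\bullet\in\ms{iPoissHopf}(\Q)$ is by construction a Poisson Hopf algebra, so by Theorem~\ref{thm:inerve} it has an i-braided lax monoidal nerve $N\colon\ms{iCom}(\Q)\to\ms{iPoissHopf}(\Q)$ satisfying the nerve condition. I would apply Theorem~\ref{thm:quantPHA} to this $N$: the composition $N_\hbar\circ U_\Phi\colon\ms{BrCom}\to\ms{iPoissHopf}(\Q)_\hbar^\Phi$ is a braided lax monoidal functor still satisfying the nerve condition, and therefore, via Theorem~\ref{thm:brnerve}, yields a Hopf algebra structure on $\bullet\in\ms{iPoissHopf}(\Q)_\hbar^\Phi$ which quantizes the given Poisson Hopf structure. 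By the universal property of the braided prop $\ms{BrHopf}$ this data is equivalent to a strict braided monoidal functor $Q_\Phi^{\mathit{br}}\colon\ms{BrHopf}\to\ms{iPoissHopf}(\Q)_\hbar^\Phi$ with $Q_\Phi^{\mathit{br}}(\bullet^n)=\bullet^n$, and the asserted quantization property is then built in.

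The substantive check is grouplikeness, which I would trace through each step. The functor $U_\Phi$ is grouplike by the example preceding the theorem; the coherence morphisms of $N$, built in Proposition~\ref{prop:scosca} out of $m$ and $\eta$ of $\bullet\in\ms{iPoissHopf}(\Q)$ (which are grouplike in the $\Q$-cc enrichment), are grouplike; and by Theorem~\ref{thm:brnerve} the structure maps $m,\Delta,\eta,\epsilon,S$ of the quantized Hopf algebra are produced by applying $N_\hbar\circ U_\Phi$ to specific morphisms of $\ms{BrCom}$ composed with coherence and nerve-condition isomorphisms, which all lie in the grouplike subcategory, making essential use of the hypothesis that $\Phi$ is grouplike and that $t$ is primitive (so that $\exp(\hbar\,t/2)$ is grouplike).

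For $Q_\Phi$ I would repeat the construction with the target iBMC taken to be $\ms{PoissHopf}(\Q)$ itself, equipped with the trivial infinitesimal braiding $t\equiv 0$. By the remark following Theorem~\ref{thm:DrCat}, $\ms{PoissHopf}(\Q)_\hbar^\Phi=\ms{PoissHopf}(\Q)_\hbar$ as symmetric monoidal categories, so the resulting grouplike braided strong monoidal functor $\ms{BrHopf}\to\ms{PoissHopf}(\Q)_\hbar$ lands in a category whose braiding is the symmetry, and hence factors uniquely as $Q_\Phi\colon\ms{Hopf}\to\ms{PoissHopf}(\Q)_\hbar$ through the projection $\ms{BrHopf}\to\ms{Hopf}$. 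The main (mild) obstacle I expect is bookkeeping of grouplikeness through the various coherence compositions; beyond that, everything is a direct consequence of the earlier theorems.
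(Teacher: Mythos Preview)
Your approach is essentially identical to the paper's: apply Theorem~\ref{thm:quantPHA} to the tautological Poisson Hopf algebra $\bullet\in\ms{iPoissHopf}(\Q)$, use the universal property of $\ms{BrHopf}$, and then repeat with $\ms{PoissHopf}(\Q)$ (trivial $t$) for the symmetric case. One small point worth tightening: for grouplikeness you note that $U_\Phi$ and the coherence morphisms of $N$ are grouplike, but you also need that $N$ itself sends grouplike morphisms to grouplike morphisms---i.e.\ that $N$ is an $R$-cc enriched functor---since the Hopf operations arise from $N_\hbar$ applied to the (grouplike) image of $U_\Phi$; the paper states this explicitly. Also, $Q_\Phi^{\mathit{br}}$ is strong monoidal rather than strict, since $\ms{iPoissHopf}(\Q)_\hbar^\Phi$ carries the non-trivial associator $\Phi$.
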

\begin{proof}
 Since $\bullet$ is a Poisson-Hopf algebra in $\ms{iPoissHopf}(\Q)$, by Theorem \ref{thm:quantPHA} we get its quantization in $\ms{iPoissHopf}(\Q)_\hbar^\Phi$, and thus a braided strong monoidal functor
$$Q_\Phi^\mathit{br}\colon \ms{BrHopf}\to\ms{iPoissHopf}(\Q)_\hbar^\Phi, \quad Q_\Phi^\mathit{br}(\bullet^n)=\bullet^n\ (\forall n).$$

To see that $Q_\Phi^\mathit{br}$ is grouplike, let us notice that the nerve functor corresponding to the Poisson Hopf algebra $\bullet\in\ms{iPoissHopf}(\Q)$, 
$$N\colon \ms{iCom}(\Q)\to\ms{iPoissHopf}(\Q),$$
 is $R$-cc enriched i-braided lax monoidal. As a result 
$$N_\hbar\colon \ms{iCom}(\Q)_\hbar^\Phi\to\ms{iPoissHopf}(\Q)_\hbar^\Phi$$
 is $R$-cc enriched braided lax monoidal, and thus its composition with $U_\Phi$ is a group\-like braided lax monoidal functor $\ms{BrCom}\to\ms{iPoissHopf}(\Q)_\hbar^\Phi$. As a result, all the defining operations of the resulting braided Hopf algebra $\bullet\in\ms{iPoissHopf}(\Q)_\hbar^\Phi$ are grouplike, and thus $Q_\Phi^\mathit{br}$ is grouplike.

Applying the same construction and reasoning to $\ms{PoissHopf}(\Q)$ we get a grouplike morphism of props
$Q_\Phi\colon \ms{Hopf}\to\ms{PoissHopf}(\Q)_\hbar$.
\end{proof}

\begin{rem}
As explained in \cite[Sec.~4]{EE}, quantization of Lie bialgebras is equivalent to a suitable morphism of props
$$\ms{Hopf}\to\overline{\ms{PoissHopf}}(\Q)_\hbar$$
where $\overline{\ms{PoissHopf}}(\Q)$ is a certain completion of $\ms{PoissHopf}(\Q)$. As we have shown, such a morphism exists also without a completion.
\end{rem}

The previous theorem puts our quantization of Poisson Hopf algebras to the following form. If $H\in\C$ is a Poisson Hopf algebra in an $R$-iBMC with $\Q\subset R$, i.e.\ if we have a $\Q$-i-braided strong monoidal functor $F\colon \ms{iPoissHopf}(\Q)\to\C$ with $F(\bullet)=H$, then we compose $F_\hbar$ with $Q^\textit{br}_\Phi$ and thus make $H$ to a Hopf algebra in $\C_\hbar^\Phi$. If the infinitesimal braiding of $\C$ vanishes then we can work with $\ms{PoissHopf}$ and $Q_\Phi$ in place of $\ms{iPossHopf}$ and $Q^\textit{br}_\Phi$.

If $H\in\C$ is a Poisson Hopf algebra, let $H_\hbar^\Phi\in\C_\hbar^\Phi$ denote the same object $H$ with its new Hopf algebra structure.
The fact that $Q^\textit{br}_\Phi$ and $Q_\Phi$ are grouplike gives us the following.  

\begin{cor}
Suppose that $H_1\in\C_1$ and $H_2\in\C_2$ are Poisson Hopf algebras. Then also $(H_1,H_2)\in\C_1\boxtimes_R\C_2$ is a Poisson Hopf algebra, and we have the equality of Hopf algebras
$$(H_1{}_\hbar^\Phi,H_2{}_\hbar^\Phi)=(H_1,H_2)_\hbar^\Phi \ \in\ \C_1{}_\hbar^\Phi\boxtimes_{R[\![\hbar]\!]}\C_2{}_\hbar^\Phi=(\C_1\boxtimes_R\C_2)_\hbar^\Phi.$$

If $\C$ has vanishing infinitesimal braiding, and thus $\C_\hbar^\Phi=\C_\hbar$ is a  SMC, and if $H_{1,2}\in\C$ are Poisson Hopf algebras, then also
$$(H_1\otimes H_2)_\hbar^\Phi=H_1{}_\hbar^\Phi\otimes H_2{}_\hbar^\Phi.$$
\end{cor}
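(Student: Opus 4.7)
The plan is to express both sides of the claimed equality as braided strong monoidal functors $\ms{BrHopf}\to(\mc C_1\boxtimes_R\mc C_2)_\hbar^\Phi=(\mc C_1)_\hbar^\Phi\boxtimes_{R[\![\hbar]\!]}(\mc C_2)_\hbar^\Phi$ and then to show these functors agree, with the grouplikeness of $Q^{br}_\Phi$ doing all the real work. First I would write $F_i\colon\ms{iPoissHopf}(\Q)\to\mc C_i$ for the $\Q$-i-braided strong monoidal functor encoding $H_i$. By the cc-enrichment mechanism recalled just before the corollary, the Poisson Hopf algebra $(H_1,H_2)\in\mc C_1\boxtimes_R\mc C_2$ is encoded by $(F_1\boxtimes F_2)\circ\Delta_{\ms{iPoissHopf}(\Q)}$, so $(H_1,H_2)_\hbar^\Phi$ corresponds to
\[
\bigl((F_1)_\hbar\boxtimes(F_2)_\hbar\bigr)\circ(\Delta_{\ms{iPoissHopf}(\Q)})_\hbar\circ Q^{br}_\Phi.
\]
Setting $F_i'=(F_i)_\hbar\circ Q^{br}_\Phi$, the pair of individual quantizations $(H_1{}_\hbar^\Phi,H_2{}_\hbar^\Phi)$ corresponds instead to $(F_1'\boxtimes F_2')\circ\Delta_{\ms{BrHopf}}$.

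The main step will be the intertwining identity
\[
(\Delta_{\ms{iPoissHopf}(\Q)})_\hbar\circ Q^{br}_\Phi=(Q^{br}_\Phi\boxtimes Q^{br}_\Phi)\circ\Delta_{\ms{BrHopf}},
\]
and this is exactly where grouplikeness enters. Every morphism of $\ms{BrHopf}$ is built from the generators $\uln m,\uln\Delta,\uln\eta,\uln\epsilon,\uln S$ together with the braiding by composition and tensor product; all of these are grouplike, and grouplike morphisms are closed under composition and tensor product, hence every morphism of $\ms{BrHopf}$ is grouplike. Since $Q^{br}_\Phi$ is grouplike, it sends each such morphism to a grouplike morphism of $\ms{iPoissHopf}(\Q)_\hbar^\Phi$, and on a grouplike $g$ the coproduct gives $\Delta(g)=g\otimes g$ by the very definition of grouplikeness. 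Both sides of the displayed identity therefore evaluate on $f$ to $Q^{br}_\Phi(f)\otimes Q^{br}_\Phi(f)$.

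Substituting the identity into the expression for $(H_1,H_2)_\hbar^\Phi$ gives
\[
\bigl((F_1)_\hbar\boxtimes(F_2)_\hbar\bigr)\circ\bigl(Q^{br}_\Phi\boxtimes Q^{br}_\Phi\bigr)\circ\Delta_{\ms{BrHopf}}=(F_1'\boxtimes F_2')\circ\Delta_{\ms{BrHopf}},
\]
which is precisely $(H_1{}_\hbar^\Phi,H_2{}_\hbar^\Phi)$, proving the first equation. For the second, I would post-compose with the symmetric strong monoidal functor $\otimes\colon\mc C\boxtimes_R\mc C\to\mc C$ (available because $\mc C$ is symmetric) and use that vanishing infinitesimal braiding gives $\mc C_\hbar^\Phi=\mc C_\hbar$ and $(\mc C\boxtimes_R\mc C)_\hbar^\Phi=\mc C_\hbar\boxtimes_{R[\![\hbar]\!]}\mc C_\hbar$, so the first equation descends to $(H_1\otimes H_2)_\hbar^\Phi=H_1{}_\hbar^\Phi\otimes H_2{}_\hbar^\Phi$. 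The only real subtlety I anticipate is the bookkeeping of the cc-enriched structures on $\ms{BrHopf}$ and $\ms{iPoissHopf}(\Q)_\hbar^\Phi$ and ensuring that $\Delta_{\ms{BrHopf}}$ is matched correctly against $(\Delta_{\ms{iPoissHopf}(\Q)})_\hbar$; once these are aligned, the whole argument collapses onto the grouplikeness of $Q^{br}_\Phi$.
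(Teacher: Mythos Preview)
Your proposal is correct and is precisely the unpacking the paper has in mind: the paper gives no explicit proof, stating only that the corollary follows from $Q^{br}_\Phi$ and $Q_\Phi$ being grouplike, and your argument is exactly the intended expansion of that remark. The one small notational liberty is your use of $\Delta_{\ms{BrHopf}}$, since the paper never equips $\ms{BrHopf}$ with an $R$-cc enrichment; but this is harmless, as you can either $\Q$-linearize $\ms{BrHopf}$ and declare every original morphism grouplike, or simply bypass $\Delta_{\ms{BrHopf}}$ by observing that $Q^{br}_\Phi$ factors through $\bigl(\ms{iPoissHopf}(\Q)_\hbar^\Phi\bigr)^{gl}$, on which $(\Delta_{\ms{iPoissHopf}(\Q)})_\hbar$ acts as the literal diagonal $g\mapsto g\otimes g$---which is exactly the content of your intertwining identity.
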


\section{Proofs of the nerve theorems}\label{sec:proofs}

\subsection{Nerves of braided Hopf algebras (proof of Theorem \ref{thm:brnerve})}\label{sec:pfbr}

We can suppose that $\C$ is strict monoidal. We shall prove the theorem in the following form: there is an \emph{isomorphism} of categories between Hopf algebras with invertible antipodes in $\C$ and braided lax monoidal functors $N\colon\ms{BrCom}\to\C$ which satisfy the \emph{strict nerve condition}: the isomorphisms in the nerve condition are required to be identities.

Let us construct $N=N_H\colon\ms{BrCom}\to\C$ out of a Hopf algebra $H$. By the strict nerve condition we have
$$N_H(\bullet^n)=H^{\otimes(n-1)}.$$
The strict nerve condition also implies that the coherence morphisms $$N_H(\bullet^m)\otimes N_H(\bullet^n)\to N_H(\bullet^{m+n})$$ for $m,n>0$ are 
$$\id_H^{\otimes(m-1)}\otimes\eta\otimes\id_H^{\otimes(n-1)}\colon H^{\otimes(m-1)}\otimes H^{\otimes(n-1)}\to H^{\otimes(m+n-1)}$$
where $\eta\colon 1_\C\to H$ is the unit of $H$, as explained for the case $m=2$, $n=3$:
$$
\begin{tikzpicture}[xscale=0.5,yscale=0.5,baseline=-0.1cm]
\coordinate (bigdiff) at (1.5, 0);
\coordinate (S1) at (0, 0);
\coordinate (S2) at ($(S1)+(bigdiff)$);
\coordinate (S3) at ($(S1)+2*(bigdiff)$);
\coordinate (S4) at ($(S1)+3*(bigdiff)$);
\coordinate (xshift) at (0.25, 0);
\coordinate(ep) at (0.3,0);

\coordinate(B2) at ($0.5*(S1)+0.5*(S2)+(0, 1)$);
\coordinate(B3) at ($0.5*(S2)+0.5*(S3)+(0, 1)$);
\coordinate(B4) at ($0.5*(S3)+0.5*(S4)+(0, 1)$);

\fill[black!10]%
($(S1)+(0,1)-(xshift)-(ep)$)--($(S1)-(0,1)-(xshift)-(ep)$)--%
($(S1)-(0,1)+(xshift)+(ep)$)--($(S1)+(xshift)+(ep)$)%
..controls+(0,0.5) and +(0, 0.5)..($(S2)-(xshift)-(ep)$)%
..controls+(0,-0.5) and +(0, -0.5)..($(S2)+(xshift)+(ep)$)%
..controls+(0,0.5) and +(0, 0.5)..($(S3)-(xshift)-(ep)$)--($(S3)-(xshift)-(ep)-(0,1)$)--%
($(S3)-(0,1)+(xshift)+(ep)$)--($(S3)+(xshift)+(ep)$)%
..controls+(0,0.5) and +(0, 0.5)..($(S4)-(xshift)-(ep)$)%
--($(S4)-(0,1)-(xshift)-(ep)$)--%
($(S4)-(0,1)+(xshift)+(ep)$)--($(S4)+(0,1)+(xshift)+(ep)$);

\draw[black!30,thick]%
($(S1)-(0,1)+(xshift)+(ep)$)--($(S1)+(xshift)+(ep)$)%
..controls+(0,0.5) and +(0, 0.5)..($(S2)-(xshift)-(ep)$)%
..controls+(0,-0.5) and +(0, -0.5)..($(S2)+(xshift)+(ep)$)%
..controls+(0,0.5) and +(0, 0.5)..($(S3)-(xshift)-(ep)$)--($(S3)-(xshift)-(ep)-(0,1)$)%
($(S3)-(0,1)+(xshift)+(ep)$)--($(S3)+(xshift)+(ep)$)%
..controls+(0,0.5) and +(0, 0.5)..($(S4)-(xshift)-(ep)$)%
--($(S4)-(0,1)-(xshift)-(ep)$)%
($(S1)-(0,1)-(xshift)-(ep)$)--($(S1)+(0,1)-(xshift)-(ep)$)%
($(S4)-(0,1)+(xshift)+(ep)$)--($(S4)+(0,1)+(xshift)+(ep)$);

\draw ($(S1)-(0,1)-(xshift)$)node[bul]{}--($(S1)+(0,1)-(xshift)$)node[bul]{};
\draw ($(S1)-(0,1)+(xshift)$)node[bul]{}--($(S1)+(0,0)+(xshift)$)to[out=90, in=-90](B2)node[bul]{};
\coordinate (sxshift) at ($0.8*(xshift)$);
\draw ($(S2)-(0,0.2)-(sxshift)$)--($(S2)+(0,0)-(sxshift)$)to[out=90, in=-90](B2);
\draw ($(S2)-(0,0.2)+(sxshift)$)--($(S2)+(0,0)+(sxshift)$)to[out=90, in=-90](B3);
\draw ($(S3)-(0,1)-(xshift)$)node[bul]{}--($(S3)+(0,0)-(xshift)$)to[out=90, in=-90](B3)node[bul]{};
\draw ($(S3)-(0,1)+(xshift)$)node[bul]{}--($(S3)+(0,0)+(xshift)$)to[out=90, in=-90](B4)node[bul]{};
\draw ($(S4)-(0,1)-(xshift)$)node[bul]{}--($(S4)+(0,0)-(xshift)$)to[out=90, in=-90](B4);
\draw ($(S4)-(0,1)+(xshift)$)node[bul]{}--($(S4)+(0,1)+(xshift)$)node[bul]{};

\draw[yellow, thick] ($(S1)-(xshift)-2*(ep)$)--($(S4)+(xshift)+2*(ep)$);
\end{tikzpicture}
\;=\;
\begin{tikzpicture}[xscale=0.5,yscale=0.5,baseline=-0.1cm]
\coordinate (bigdiff) at (1.5, 0);
\coordinate (S1) at (0, 0);
\coordinate (S2) at ($(S1)+(bigdiff)$);
\coordinate (S3) at ($(S1)+2*(bigdiff)$);
\coordinate (S4) at ($(S1)+3*(bigdiff)$);
\coordinate (xshift) at (0.25, 0);
\coordinate(ep) at (0.3,0);

\coordinate(D) at ($0.5*(S3)+0.5*(S4)$);

\fill[black!10]%
($(S1)+(0,1)-(xshift)-(ep)$)--($(S1)-(0,1)-(xshift)-(ep)$)%
--($(S1)-(0,1)+(xshift)+(ep)$)--($(S1)+(xshift)+(ep)$)%
..controls+(0,0.5) and +(0, 0.5)..($(S3)-(xshift)-(ep)$)--($(S3)-(xshift)-(ep)-(0,1)$)%
--($(S3)-(0,1)+(xshift)+(ep)$)%
..controls+(0,0.5) and +(0, 0.5)..%
($(S4)-(0,1)-(xshift)-(ep)$)%
--($(S4)-(0,1)+(xshift)+(ep)$)--($(S4)+(0,1)+(xshift)+(ep)$);

\draw[black!30,thick]%
($(S1)-(0,1)+(xshift)+(ep)$)--($(S1)+(xshift)+(ep)$)%
..controls+(0,0.5) and +(0, 0.5)..($(S3)-(xshift)-(ep)$)--($(S3)-(xshift)-(ep)-(0,1)$)%
($(S3)-(0,1)+(xshift)+(ep)$)%
..controls+(0,0.5) and +(0, 0.5)..%
($(S4)-(0,1)-(xshift)-(ep)$)%
($(S1)-(0,1)-(xshift)-(ep)$)--($(S1)+(0,1)-(xshift)-(ep)$)%
($(S4)-(0,1)+(xshift)+(ep)$)--($(S4)+(0,1)+(xshift)+(ep)$);

\draw ($(S1)-(0,1)-(xshift)$)node[bul]{}--($(S1)+(0,1)-(xshift)$)node[bul]{};
\draw ($(S1)-(0,1)+(xshift)$)node[bul]{}--($(S1)+(0,0)+(xshift)$)to[out=90, in=-90](B2)node[bul]{};
\draw ($(S3)-(0,1)-(xshift)$)node[bul]{}--($(S3)+(0,0)-(xshift)$)to[out=90, in=-90](B3)node[bul]{};
\draw ($(S3)-(0,1)+(xshift)$)node[bul]{}to[out=90, in=-90](D)--($(D)+(0, 1)$)node[bul]{};
\draw ($(S4)-(0,1)-(xshift)$)node[bul]{}to[out=90, in=-90](D)--($(D)+(0, 1)$)node[bul]{};
\draw ($(S4)-(0,1)+(xshift)$)node[bul]{}--($(S4)+(0,1)+(xshift)$)node[bul]{};

\draw[yellow, thick] ($(S1)-(xshift)-2*(ep)$)--($(S4)+(xshift)+2*(ep)$);
\end{tikzpicture}
$$
The remaining coherence morphisms are identities.

The main part of the proof is to describe the values of $N_H$ on the morphisms of $\ms{BrCom}$. Let us do it and illustrate it  with the example

$$N_H\bigl(
\begin{tikzpicture}[scale=0.25,baseline=0.4cm]
\node(b1)at(-3,0)[bul]{};
\node(b2)at(-1,0)[bul]{};
\node(b3)at(1,0)[bul]{};
\node(b4)at(3,0)[bul]{};
\node(t1)at(-2,4)[bul]{};
\node(t2)at(0,4)[bul]{};
\node(t3)at(2,4)[bul]{};
\draw(b3) to[out=110,in=-70] (t1);
\draw[line width=1ex,white](b1)to[out=70,in=-110](t2) (b2)to[out=70,in=-90](t3);
\draw(b1)to[out=70,in=-110](t2) (b2)to[out=70,in=-90](t3) (b4)to[out=90,in=-90](t3);
\end{tikzpicture}
\bigr)\colon H^{\otimes3}\to H^{\otimes2}.$$

If $\phi:\bullet^m\to\bullet^n$, we get $N_H(\phi):H^{\otimes(m-1)}\to H^{\otimes(n-1)}$ as follows:
\begin{itemize}
		\item We start by drawing $m$ points on a horizontal line $\ell$ in the plane. There are $m-1$ intervals between these points, each of them carrying one copy of $H$  (all together representing $H^{\otimes(m-1)}$).
$$
\begin{tikzpicture}[xscale=1.5,yscale=0.25]
\node(x1) at(-1.2,0)[bul]{};
\node(x2) at(-.4,0)[bul]{};
\node(x3) at(.4,0)[bul]{};
\node(x4) at(1.2,0)[bul]{};
\draw[dashed](-1.9,0)--(x1);
\draw[dashed](x4)--(1.9,0)node[below left]{$\ell$};
\draw(x1)--node[above]{$H$}(x2)--node[above]{$H$}(x3)--node[above]{$H$}(x4);
\end{tikzpicture}
$$
		\item The morphism (braid) $\phi$ can be interpreted as an isotopy of the plane between the identity and a diffeomorphism $\psi$ of $\R^2$, such that the isotopy stays the identity outside a large disk. The diffeomorphism $\psi$ brings the $m$ points to $n$ different landing pads, again situated along a horizontal line.
$$
\begin{tikzpicture}[xscale=1.5,yscale=0.25]
 \fill[black!40] (-2.8,-1.5)--(-2.2,-1.5)--(-2.2,2.5)--(-2.8,2.5)--cycle;
\fill[black!40] (-0.3,-1.5)--(0.3,-1.5)--(0.3,2.5)--(-0.3,2.5)--cycle;
\fill[black!40] (2.8,-1.5)--(2.2,-1.5)--(2.2,2.5)--(2.8,2.5)--cycle;
%\fill[black!15](-2.2,-1.5)--(-2.2,2.5)--(-0.3,2.5)--(-0.3,-1.5)--cycle;
%\fill[black!15](2.2,-1.5)--(2.2,2.5)--(0.3,2.5)--(0.3,-1.5)--cycle;
%\shade[left color=black!10, right color=white](2.8,-1.5)--(2.8,2.5)--(3.5,2.5)--(3.5,-1.5)--cycle;
%\shade[right color=black!10, left color=white](-2.8,-1.5)--(-2.8,2.5)--(-3.5,2.5)--(-3.5,-1.5)--cycle;

\node(1) at (0,-0.8)[bul]{};
\node(2) at (2.4,-0.4)[bul]{};
\node(3) at (-2.5,0.8)[bul]{};
\node(4) at (2.6,1)[bul]{};
\draw[dashed](-3.5,0)..controls+(1,0)and+(-1.5,0)..(1);
\path[draw, name path = seg1] (1)..controls+(0.3,0)and+(0,-0.7)..(2);
\path[draw, name path = seg2] (2)..controls+(0,1.5)and+(0,-1)..(3);
\path[draw, name path = seg3] (3)..controls+(0,1)and+(-0.75,1)..(4);
\draw[dashed](4)..controls+(0.45,-0.6)and+(-0.5,0)..(3.5,0.5)node[below left]{$\psi(\ell)$};

\end{tikzpicture}
$$

		\item Now we draw $n-1$  vertical  lines between the landing pads. We suppose that they intersect $\psi(\ell)$ transversely. If the $\psi$-image of an interval meets the vertical lines $k$ times, we apply the iterated coproduct $H\to H^{\otimes k}$ to the copy of $H$ associated to that interval (if $k = 0$, we use the counit). At this point we have one  $H$ for each intersection point, i.e.\ we produced a morphism $H^{\otimes(m-1)}\to H^{\otimes M}$ where $M$ is the number of the intersections.
$$
\begin{tikzpicture}[xscale=1.5,yscale=0.25]
\node(x1) at(-1.2,0)[bul]{};
\node(x2) at(-.4,0)[bul]{};
\node(x3) at(.4,0)[bul]{};
\node(x4) at(1.2,0)[bul]{};
\draw[dashed](-1.9,0)--(x1);
\draw[dashed](x4)--(1.9,0);
\draw(x1)--node[obul]{}node[above]{$H$}(x2) (x2)--node[obul, near start]{}node[above, near start]{$H$} node[obul, near end]{}node[above, near end]{$H$}(x3)(x3)--node[obul, near start]{}node[above, near start]{$H$} node[obul, near end]{}node[above, near end]{$H$}(x4);

\begin{scope}[yshift=-5cm]
\fill[black!40] (-2.8,-1.5)--(-2.2,-1.5)--(-2.2,2.5)--(-2.8,2.5)--cycle;
\fill[black!40] (-0.3,-1.5)--(0.3,-1.5)--(0.3,2.5)--(-0.3,2.5)--cycle;
\fill[black!40] (2.8,-1.5)--(2.2,-1.5)--(2.2,2.5)--(2.8,2.5)--cycle;
%\fill[black!15](-2.2,-1.5)--(-2.2,2.5)--(-0.3,2.5)--(-0.3,-1.5)--cycle;
%\fill[black!15](2.2,-1.5)--(2.2,2.5)--(0.3,2.5)--(0.3,-1.5)--cycle;
%\shade[left color=black!10, right color=white](2.8,-1.5)--(2.8,2.5)--(3.5,2.5)--(3.5,-1.5)--cycle;
%\shade[right color=black!10, left color=white](-2.8,-1.5)--(-2.8,2.5)--(-3.5,2.5)--(-3.5,-1.5)--cycle;

\node(1) at (0,-0.8)[bul]{};
\node(2) at (2.4,-0.4)[bul]{};
\node(3) at (-2.5,0.8)[bul]{};
\node(4) at (2.6,1)[bul]{};
\draw[dashed](-3.5,0)..controls+(1,0)and+(-1.5,0)..(1);
\path[draw, name path = seg1] (1)..controls+(0.3,0)and+(0,-0.7)..(2);
\path[draw, name path = seg2] (2)..controls+(0,1.5)and+(0,-1)..(3);
\path[draw, name path = seg3] (3)..controls+(0,1)and+(-0.75,1)..(4);
\draw[dashed](4)..controls+(0.45,-0.6)and+(-0.5,0)..(3.5,0.5);

\path[draw=orange, name path=vert1] (-1.2,-2.5)--(-1.2,3.5);
\path[draw=orange, name path=vert2] (1.2,-2.5)--(1.2,3.5);
\fill [orange, name intersections={of=seg1 and vert2}]
(intersection-1)  node[obul] {};
\fill [orange, name intersections={of=seg2 and vert2}]
(intersection-1) node[obul] {};
\fill [orange, name intersections={of=seg2 and vert1}]
(intersection-1) node[obul] {};
\fill [orange, name intersections={of=seg3 and vert1}]
(intersection-1) node[obul] {};
\fill [orange, name intersections={of=seg3 and vert2}]
(intersection-1) node[obul] {};
\path[draw=orange, thick] (-1.2,-2.5)--(-1.2,3.5);
\path[draw=orange, thick] (1.2,-2.5)--(1.2,3.5);
\end{scope}
\end{tikzpicture}
$$

		\item For each of the intersection points $P$ we find the total number of half-turns $k_P$ when moving along $\psi(\ell)$ (turns in the positive direction are counted positively, in the negative direction negatively). Then we apply $S^{k_P}$ to the $H$ corresponding to $P$.
$$
\begin{tikzpicture}[xscale=1.5,yscale=0.25]
\node(x1) at(-1.2,0)[bul]{};
\node(x2) at(-.4,0)[bul]{};
\node(x3) at(.4,0)[bul]{};
\node(x4) at(1.2,0)[bul]{};
\draw[dashed](-1.9,0)--node[below]{$k_P:$}(x1);
\draw[dashed](x4)--(1.9,0);
\draw(x1)--node[obul]{}node[below]{$\vphantom{k_P}0$}(x2) (x2)--node[obul, near start]{}node[below, near start]{$\vphantom{k_P}1$} node[obul, near end]{}node[below, near end]{$\vphantom{k_P}1$}(x3)(x3)--node[obul, near start]{}node[below, near start]{$\vphantom{k_P}0$} node[obul, near end]{}node[below, near end]{$\vphantom{k_P}0$}(x4);
\end{tikzpicture}
$$

		\item We move each of the $M$ $H$'s by the isotopy (using the braiding in $\C$), and finally multiply them along the vertical lines from the bottom to the top. Composing all these morphisms we get $N_H(\phi)\colon H^{\otimes(m-1)}\to H^{\otimes(n-1)}$. (The multiplication is done by first moving the intersection points to a horizontal position, with the bottom-most points on the left and the top-most on the right, and then multiplying the corresponding $H$'s.)
\end{itemize}

In our example we thus have (suppressing the braiding in $\C$ in the formula)

$$
\begin{tikzpicture}[xscale=1.5,yscale=0.25]
%\fill[black!30] (-2,0) circle(.5cm) (0,0) circle(.5cm) (2,0) circle(.5cm);
\begin{scope}[xshift=-1.5cm, yshift=8cm]
\node(x1) at(-1.2,0)[bul]{};
\node(x2) at(-.4,0)[bul]{};
\node(x3) at(.4,0)[bul]{};
\node(x4) at(1.2,0)[bul]{};
\draw[dashed](-1.9,0)--(x1);
\draw[dashed](x4)--(1.9,0);
\draw(x1)--node[below]{$\vphantom{b}a$}(x2)node[below]{$\otimes\vphantom{b}$}--node[below]{$b$}(x3)node[below]{$\otimes\vphantom{b}$}--node[below]{$\vphantom{b}c$}(x4);
%\draw[thick,gray,->] (1.5,1)to [bend left](3.5,1);
\draw[|->] (3,0)--(3.5,0);
\end{scope}
\fill[black!40] (-2.8,-1.5)--(-2.2,-1.5)--(-2.2,2.5)--(-2.8,2.5)--cycle;
\fill[black!40] (-0.3,-1.5)--(0.3,-1.5)--(0.3,2.5)--(-0.3,2.5)--cycle;
\fill[black!40] (2.8,-1.5)--(2.2,-1.5)--(2.2,2.5)--(2.8,2.5)--cycle;
%\fill[black!15](-2.2,-1.5)--(-2.2,2.5)--(-0.3,2.5)--(-0.3,-1.5)--cycle;
%\fill[black!15](2.2,-1.5)--(2.2,2.5)--(0.3,2.5)--(0.3,-1.5)--cycle;
%\shade[left color=black!10, right color=white](2.8,-1.5)--(2.8,2.5)--(3.5,2.5)--(3.5,-1.5)--cycle;
%\shade[right color=black!10, left color=white](-2.8,-1.5)--(-2.8,2.5)--(-3.5,2.5)--(-3.5,-1.5)--cycle;

\node(1) at (0,-0.8)[bul]{};
\node(2) at (2.4,-0.4)[bul]{};
\node(3) at (-2.5,0.8)[bul]{};
\node(4) at (2.6,1)[bul]{};
\draw[dashed](-3.5,0)..controls+(1,0)and+(-1.5,0)..(1);
\path[draw, name path = seg1] (1)..controls+(0.3,0)and+(0,-0.7)..(2);
\path[draw, name path = seg2] (2)..controls+(0,1.5)and+(0,-1)..(3);
\path[draw, name path = seg3] (3)..controls+(0,1)and+(-0.75,1)..(4);
\draw[dashed](4)..controls+(0.45,-0.6)and+(-0.5,0)..(3.5,0.5);

\path[draw=orange, name path=vert1] (-1.2,-2.5)--(-1.2,3.5);
\path[draw=orange, name path=vert2] (1.2,-2.5)--(1.2,3.5);
%\node at(-3,-4.5)[anchor=base]{$\mapsto$};
\fill [orange, name intersections={of=seg1 and vert2}]
(intersection-1)  node[left,black, fill=orange!10, fill opacity=0.9,inner xsep=3pt, inner ysep=2pt] {$\scriptstyle a$} node[obul] {};
\fill [orange, name intersections={of=seg2 and vert2}]
(intersection-1) node[right,black, fill=orange!10,fill opacity=0.9,inner xsep=2pt, inner ysep=1pt] {$\scriptstyle S(b_{(1)})$}node[obul] {};
\fill [orange, name intersections={of=seg2 and vert1}]
(intersection-1) node[right,black, fill=orange!10,fill opacity=0.9,inner xsep=2pt,inner ysep=1pt] {$\scriptstyle S(b_{(2)})$}node[obul] {};
\fill [orange, name intersections={of=seg3 and vert1}]
(intersection-1) node[left,black, fill=orange!10,fill opacity=0.9,inner xsep=2pt, inner ysep=1pt] {$\scriptstyle c_{(1)}$}node[obul] {};
\fill [orange, name intersections={of=seg3 and vert2}]
(intersection-1) node[left,black, fill=orange!10,fill opacity=0.9,inner xsep=2pt,inner ysep=1pt] {$\scriptstyle c_{(2)}$}node[obul] {};
\node at(-1.2,-4.5)[anchor=base] {$S(b_{(2)})\,c_{(1)}$};
\node at(1.2,-4.5)[anchor=base] {$a\,S(b_{(1)})\,c_{(2)}$};
\node at(0,-4.5)[anchor=base] {$\otimes\vphantom{S}$};
\path[draw=orange, thick] (-1.2,-2.5)--(-1.2,3.5);
\path[draw=orange, thick] (1.2,-2.5)--(1.2,3.5);

\end{tikzpicture}
$$
i.e.\  $N_H(\phi):H^{\otimes 3}\to H^{\otimes 2}$ is
$$
\begin{tikzpicture}[thick]
\coordinate (1) at (-1,0);
\coordinate (2) at (0,0);
\coordinate (3) at (1,0);
\coordinate (a) at (-0.5,1.5);
\coordinate (b) at (0.5,1.5);
\draw (1)--+(0,-0.3) (2)--+(0,-0.3) (3)--+(0,-0.3);
\draw (a)--+(0,0.3) (b)--+(0,0.3);

\draw (2) to[out=40, in=-130] node[anti, pos=0.8]{} (a);
\draw (3) to[out=130, in=-50] (a);

\draw[line width=1ex,white] (1) to[out=90, in=-120] (b);
\draw[line width=1ex,white] (2) +(-0.01,0.01) to[out=140, in=-90] (b);

\draw (1) to[out=90, in=-120] (b);
\draw (2) to[out=140, in=-90] node[anti, pos=0.8]{} (b);
\draw (3) to[out=50, in=-60] (b);
\end{tikzpicture}
$$

To see that $N_H(\phi)$ is well defined, i.e.\ independent of the details of the isotopy and of the transversals, we need to verify that it is invariant under moves  of the type
$$
\begin{tikzpicture}[yscale=0.6,baseline=-0.1cm]
\path[draw=orange, thick, name path=vert] (0,-1)--(0,1);
\path[draw=black, name path=ell] (-1,-0.7) to[out=5,in=-90] (0.3,0) to[out=90,in=-5] (-1,0.7);
\fill [orange, name intersections={of=vert and ell}]
(intersection-1)  node[obul] {};
\fill [orange, name intersections={of=vert and ell}]
(intersection-2)  node[obul] {};
\end{tikzpicture}
\quad
\leftrightarrow
\quad
\begin{tikzpicture}[yscale=0.6,baseline=-0.1cm]
\path[draw=orange, thick, name path=vert] (0,-1)--(0,1);
\path[draw=black, name path=ell] (-1,-0.7) to[out=5,in=-90] (-0.2,0) to[out=90,in=-5] (-1,0.7);
\end{tikzpicture}
$$
This invariance follows from the defining property of the antipode.

 The fact that $N_H$ is a functor, i.e.\ that $N_H(\phi_1\circ\phi_2)=N_H(\phi_1)\circ N_H(\phi_2)$, follows from $H$ being a bialgebra. 		Namely, let $m^{(p)}\colon H^{\otimes p} \to H$ denote the iterated product
 $$m^{(0)} = \eta,\quad m^{(p+1)} = m \circ (m^{(p)}\otimes\id_H).$$
and  $\Delta^{(q)}\colon H\to H^{\otimes q}$   the iterated coproduct 
$$\Delta^{(0)} = \epsilon,\quad\Delta^{(q+1)} = (\Delta^{(q)}\otimes \id_H)\circ\Delta.$$
		Then for each $p, q$ we have
\begin{equation}\label{bialgbig}
(m^{(q)})^{\otimes p}\circ \tau_{p, q} \circ(\Delta^{(p)})^{\otimes q}  =   \Delta^{(p)} \circ m^{(q)} 
\end{equation}
where $\tau_{p, q}\colon H^{\otimes pq} \to H^{\otimes pq}$ is given by the braid which reshuffles $q$ groups, each made of
		$p$ strands, into $p$ groups, each made of $q$ strands, by taking the first element of each group together, then second etc., using overcrossings only.
$$
\tau_{2,3}=\,
\begin{tikzpicture}[thick, baseline=0.55cm, yscale=1.2, xscale=0.8]
\coordinate (a1) at (-1.2,0);
\coordinate (a2) at (-0.9,0);
\coordinate (b1) at (-0.15,0);
\coordinate (b2) at (0.15,0);
\coordinate (c1) at (0.9,0);
\coordinate (c2) at (1.2,0);
\coordinate (A1) at (-1.2,1);
\coordinate (A2) at (-0.9,1);
\coordinate (A3) at (-0.6,1);
\coordinate (B3) at (1.2,1);
\coordinate (B2) at (0.9,1);
\coordinate (B1) at (0.6,1);
\draw (a1) to[out=90,in=-90] (A1) (b1) to[out=90,in=-90] (A2) (c1) to[out=90,in=-90] (A3);
\draw[white, line width=1ex] (a2) to[out=90,in=-90] (B1) (b2) to[out=90,in=-90] (B2) (c2) to[out=90,in=-90] (B3);
\draw (a2) to[out=90,in=-90] (B1) (b2) to[out=90,in=-90] (B2) (c2) to[out=90,in=-90] (B3);
\end{tikzpicture}
$$
The expression for $N_H(\phi_1\circ\phi_2)$ differs from the expression for $N_H(\phi_1)\circ N_H(\phi_2)$ only by replacing  RHSs of \eqref{bialgbig} with LHSs, and so they are equal. (If we use the ``$\Delta$ is horizontal and $m$ vertical'' convention as in the construction of $N_H$ then the identity \eqref{bialgbig} is the commutativity of the diagram
$$
\begin{tikzpicture}
\foreach \j in {0,0.2,0.4} \node[obul] at (0.1,\j) {}; 
\draw[->] (0.4,0.2)--node[above]{$(\Delta^{(p)})^{\otimes q}$}(2.2,0.2);
\draw[->] (0.1,-0.2)--node[left]{$m^{(q)}$}(0.1,-1.1);

\begin{scope}[xshift=2.5cm]
\foreach \i in {0,0.2}
  \foreach \j in {0,0.2,0.4}
    \node[obul] at (\i,\j) {};
\draw[->] (0.1,-0.2)--node[right]{$(m^{(q)})^{\otimes p}$}(0.1,-1.1);
\end{scope}

\begin{scope}[yshift=-1.5cm]
\node[obul] at (0.1,0.2) {};
\draw[->] (0.4,0.2)--node[above]{$\Delta^{(p)}$}(2.2,0.2);
\end{scope}

\begin{scope}[xshift=2.5cm, yshift=-1.5cm]
\foreach \i in {0,0.2} \node[obul] at (\i,0.2) {};
\end{scope}

\node at (6.5,-0.6) {($p=2,q=3$)};

\node at (-2,0){};

\end{tikzpicture}
$$
which is much more enlightening in our context.)

 Finally, the fact that $N_H$ is braided lax monoidal and that it satisfies the strict nerve condition is evident. Moreover the construction $H\mapsto N_H$ is clearly functorial.

If $N\colon\ms{BrCom}\to\C$ satisfies the strict nerve condition then checking that $H_N\vcentcolon=N({\bullet\bullet})$ (with the operations given in the theorem) is a Hopf algebra is a simple manipulation with diagrams; it is also a special case of Theorem 1 from \cite{ja}. Again the construction $N\mapsto H_N$ is functorial.

 One easily checks that $\cramped{H_{N_H}}=H$ (as Hopf algebras). To finish the proof, we need to verify that $\cramped{N_{H_N}}=N$, and it is sufficient to do it for the morphisms of the type
$$
\begin{tikzpicture}[xscale=0.5]
\node[bul](1) at (0,0) {};
\node[bul](2) at (1,0) {};
\node[bul](3) at (2,0) {};
\node[bul](4) at (3,0) {};
\node[bul](5) at (4,0) {};
\node[bul](1a) at (0,1) {};
\node[bul](2a) at (1,1) {};
\node[bul](3a) at (2,1) {};
\node[bul](4a) at (3,1) {};
\node[bul](5a) at (4,1) {};
\draw(3) to[out=90, in=-90] (2a);
\draw[line width=1ex,white] (2) to[out=90,in=-90] (3a);
\draw(2) to[out=90,in=-90] (3a);
\draw(1)--(1a) (4)--(4a) (5)--(5a);
\end{tikzpicture}
\qquad\qquad
\begin{tikzpicture}[xscale=0.5]
\node[bul](1) at (0,0) {};
\node[bul](2) at (1,0) {};
\node[bul](3) at (2,0) {};
\node[bul](4) at (3,0) {};
\node[bul](5) at (4,0) {};
\node[bul](1a) at (0.5,1) {};
\node[bul](2a) at (1.5,1) {};
\node[bul](3a) at (2.5,1) {};
\node[bul](4a) at (3.5,1) {};
\draw(1) to[out=90, in=-90] (1a);
\draw(2) to[out=90, in=-90] (2a);
\draw(3) to[out=90, in=-90] (2a);
\draw(4) to[out=90, in=-90] (3a);
\draw(5) to[out=90, in=-90] (4a);
\end{tikzpicture}
\qquad\qquad
\begin{tikzpicture}[xscale=0.5]
\node[bul](1) at (0,0) {};
\node[bul](2) at (1,0) {};
\node[bul](3) at (2,0) {};
\node[bul](1a) at (-0.5,1) {};
\node[bul](2a) at (0.5,1) {};
\node[bul](3a) at (1.5,1) {};
\node[bul](4a) at (2.5,1) {};
\draw(1) to[out=90, in=-90] (1a);
\draw(2) to[out=90, in=-90] (3a);
\draw(3) to[out=90, in=-90] (4a);
\end{tikzpicture}
$$
as they generate $\ms{BrCom}$. Checking it is again a straightforward calculation.

\begin{rem}\label{rem:YD}
The functor $N_H$ can alternatively be constructed as follows. If $\mc D$ is the category of $H$-dimodules in $\C$ (the Yetter-Drinfeld category) then $H$ is naturally a commutative algebra in $\mc D$, and thus we have a braided strong monoidal functor $F\colon\ms{BrCom}\to\mc D$, $F(\bullet^n)=H^{\otimes n}$. $H$-coinvariants (of the $H$-coaction on dimodules) is then a braided lax monoidal functor $\mc D\to\mc C$ (in general this functor might not be defined on whole of $\mc D$, but it is defined on the image of $F$), and we can define $N_H$ as the composition of these two braided lax monoidal functors.

In more detail, $H$-dimodules are objects $X$ of $\C$ equipped with a (left) $H$-action and $H$-coaction, such that for any $H$-module $Y$ the $\C$-morphism 
$$
\tilde \beta_{X,Y}:X\otimes Y\to Y\otimes X,\qquad
\tilde \beta_{X,Y}=
\begin{tikzpicture}[baseline=0.4cm, xscale=0.8]
\draw[thick,gray](1,0)node[below,black]{$Y$} to[out=105,in=-75] coordinate[pos=0.8](a) (0,1)node[above,black]{$Y$};
\draw[line width=1ex,white](0,0)--(1,1);
\draw[thick,gray](0,0)node[below,black]{$X$} to[out=75,in=-105] coordinate[pos=0.2](b) (1,1)node[above,black]{$X$};
\draw[thick](a)--(b);
\end{tikzpicture}
$$
is a morphism of $H$-modules. The category $\mc D$ of $H$-dimodules is braided via $\tilde\beta$. $H$ is an $H$-comodule via $\Delta$ and there is a unique (adjoint) $H$-module structure on $H$ such that the action $H\otimes Y\to Y$ is an $H$-module morphism for every $H$-module $Y$. This makes $H$ to an object of $\mc D$, and its algebra structure makes it to a commutative algebra in $\mc D$. We have
$$\tilde \beta_{H,H}=
\begin{tikzpicture}[baseline=0.4cm]
\draw[thick](0.7,0)to[out=90,in=-90] coordinate[pos=0.9](x) (0,1);
\draw[line width=1ex,white](0,0)--(0.7,1);
\draw[thick](0,0)to[out=90,in=-90] coordinate[pos=0.3](a)coordinate[pos=0.65](b) (0.7,1);
\draw[thick](a)to[out=120,in=-120](x) (b)--node[anti]{}(x);
\end{tikzpicture}
$$
and for any morphism  $\phi\colon\bullet^p\to\bullet^q$  in $\ms{BrCom}$ we get
$N_H(\phi)=\kappa_q\circ F(\phi)\circ \iota_p$
where the morphisms $\iota_p$ and $\kappa_q$ are defined below (see Equation \eqref{PNHmor}).
\end{rem}

\subsection{Nerves of Poisson Hopf algebras (proof of Theorem \ref{thm:inerve})}\label{sec:pfi}
As in \S\ref{sec:pfbr}, we shall suppose that $\C$ is strict monoidal and prove the theorem in the form saying that there is an isomorphism of categories between the category of Poisson Hopf algebras in $\C$ and the category of i-braided lax monoidal functors $N\colon\ms{iCom}(R)\to\C$ satisfying the strict nerve condition.
 
The main part of the proof is again a construction of a suitable $N_H\colon\ms{iCom}(R)\to\C$ out of a Poisson Hopf algebra $H\in\C$. We already know the restriction of $N_H$ to the subcategory $\ms{Com}\subset\ms{iCom}(R)$, which is the nerve of the commutative Hopf algebra $H$, and also its coherence morphisms \eqref{nervecm}.

We shall build the functor $N_H$ in two steps. 
Let $r\in\on{End}_\C(H\otimes H)$ be 
$$r=-\,
\begin{tikzpicture}[baseline=0.4cm]
\draw[thick] (0,0)--node(p)[bbul,pos=0.5]{} coordinate[pos=0.8](m) (0,1);
\draw[thick] (0.5,0)--(0.5,1) (0.5,0.2)--(p) (0.5,0.5)--node[anti]{} (m);
\end{tikzpicture}
$$
and let
$$\tau=r+r^\textit{op}+t_{H,H}$$
where $r^\textit{op}=\sigma_{H,H}\circ r\circ\sigma_{H,H}$.

\begin{lem}\label{lem:PH1}
There is a unique $R$-linear symmetric strict monoidal functor $$F_H\colon\ms{iCom}(R)\to\C$$ satisfying $F_H(\bullet)=H$, $F_H(\uln m)=m$, $F_H(\uln\eta)=\eta$, $F_H(t_{{\bullet,\bullet}})=\tau$, where $m$ and $\eta$ are the product and the unit of $H$.
\end{lem}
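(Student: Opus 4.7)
The uniqueness part is immediate: as a strict $R$-linear symmetric monoidal category, $\ms{iCom}(R)$ is generated by the morphisms $\uln m$, $\uln\eta$, and the chord $t_{\bullet,\bullet}$ (all other chords $t_{ij}$ being obtained by conjugation with symmetries). Hence any $R$-linear symmetric strict monoidal functor to $\C$ is uniquely determined by its values on these generators. For existence, I would verify that the assignment $(\uln m,\uln\eta,t_{\bullet,\bullet})\mapsto(m,\eta,\tau)$ respects all the defining relations of $\ms{iCom}(R)$: the commutative-algebra axioms (which hold because $H$ is a commutative algebra in $\C$), strong commutativity $\uln m\circ t_{\bullet,\bullet}=0$, the symmetry $\sigma\circ t_{\bullet,\bullet}\circ\sigma=t_{\bullet,\bullet}$, the Drinfeld--Kohno relations on $\bullet^3$ and $\bullet^4$, and naturality of $t_{\bullet,\bullet}$ with respect to $\uln m$ and $\uln\eta$.

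The easy checks go as follows. Symmetry $\sigma\tau\sigma=\tau$ is immediate, since $\sigma r\sigma=r^{op}$ by definition and $t_{H,H}$ is symmetric in $\C$. Naturality of $\tau$ with respect to $\uln m$ expands, via $\tau=r+r^{op}+t_{H,H}$, into three pieces: naturality of $t$ in $\C$ handles the $t_{H,H}$-summand, while the $r$ and $r^{op}$ summands reduce to the biderivation property of $p$ (using commutativity of $H$ to rearrange factors). Naturality with respect to $\uln\eta$ is similarly direct. The locality $[\tau_{12},\tau_{34}]=0$ on $H^{\otimes 4}$ is automatic from disjoint support. For the strong commutativity $m\circ\tau=0$, I would use the explicit form of $r$ together with the antipode identity $m\circ(\id\otimes S)\circ\Delta=\eta\circ\epsilon$ (applied to the middle two factors of $\Delta^{(2)}$) to compute $m\circ r=-p$ and, symmetrically, $m\circ r^{op}=-p\circ\sigma_{H,H}$; then $m\circ\tau=-(p+p\circ\sigma_{H,H})+m\circ t_{H,H}=0$ by the modified skew-symmetry \eqref{skewmod}.

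The main obstacle is the Drinfeld--Kohno 4-term relation $[\tau_{12}+\tau_{13},\tau_{23}]=0$ in $\on{End}(H^{\otimes 3})$. Expanding each $\tau_{ij}=r_{ij}+r_{ij}^{op}+(t_{H,H})_{ij}$ and grouping the resulting commutators by the number of $t_{H,H}$ factors: the pure-$t$ part vanishes by the 4-term relation in the iBMC $\C$; the one-$t$ contributions cancel using the naturality of $t$ in $\C$ applied to the Hopf-algebra operations ($\Delta$ and $S$) appearing in $r$ and $r^{op}$; and the remaining terms, involving only $r$ and $r^{op}$, cancel using the modified Jacobi identity \eqref{Jacobi} for $p$ together with the Poisson--Hopf compatibility \eqref{pHA}. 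This last step, carefully tracking Sweedler components through the iterated coproducts and antipodes and matching them against the three identities, is the principal technical content of the lemma and the place where all the Poisson Hopf axioms combine.
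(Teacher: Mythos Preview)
Your treatment of uniqueness, the biderivation property, and strong commutativity is fine and matches the paper. The problem is your attack on the 4--term relation: the decomposition by ``number of $t_{H,H}$--factors'' does not work as stated.

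First, the claim that the one--$t$ contributions cancel by naturality is only half right. Naturality of $t$ gives $t_{12}+t_{13}=t_{H,H\otimes H}$, which indeed commutes with $(r+r^{op})_{23}$. But the other half, $[(r+r^{op})_{12}+(r+r^{op})_{13},t_{23}]$, does \emph{not} vanish by naturality: $r_{12}$ is not an endomorphism of a single tensor factor of $H\otimes H$ seen from the $(2,3)$ side, and the relations you do get from naturality (e.g.\ $[r_{12},t_{13}+t_{23}]=0$) only trade one term for another rather than kill them. Second, the ``pure $r,r^{op}$'' piece cannot be closed off using \eqref{Jacobi} and \eqref{pHA} alone, because both of those identities carry explicit $t$--correction terms; applying them reintroduces one--$t$ contributions. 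In short, the $t$--grading you propose is not respected by the Poisson Hopf axioms, so the pieces do not vanish separately.

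The paper's decomposition is different and avoids this: it keeps $\tau_{23}$ \emph{intact} and splits only $\tau_{12}+\tau_{13}=r_{12}+r_{13}\;+\;r^{op}_{12}+r^{op}_{13}\;+\;t_{12}+t_{13}$. The last summand commutes with $\tau_{23}$ by naturality. For the first two, the paper proves two structural facts: $\tau$ is a morphism of $H$--comodules for the diagonal coaction (this is where \eqref{pHA} enters), and $\tau$ satisfies a compatibility \eqref{tau-mod} with the ``adjoint action'' $a\triangleright x=p(a,x_{(1)})S(x_{(2)})$ (this is where \eqref{Jacobi} enters). Then $r_{12}+r_{13}$ factors through the coaction on the $(2,3)$ slot, so it commutes with the comodule morphism $\tau_{23}$; and $r^{op}_{12}+r^{op}_{13}$ is handled analogously via \eqref{tau-mod}. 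This inhomogeneous splitting is precisely what lets the $t$--corrections in \eqref{Jacobi} and \eqref{pHA} be absorbed into $\tau_{23}$ rather than floating free.
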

On objects the functor $F_H$ is given by $F_H(\bullet^n)=H^{\otimes n}$ and on a typical morphism by 
$$
\begin{tikzpicture}[baseline=0.5cm]
\node(A1) at (0,0)[bul]{};
\node(A2) at (.5,0)[bul]{};
\node(A3) at (1,0)[bul]{};
\node(A4) at (1.5,0)[bul]{};
\node(B1) at (.25,1.3)[bul]{};
\node(B2) at (.75,1.3)[bul]{};
\node(B3) at (1.25,1.3)[bul]{};
\draw (A1)..controls +(0,.5) and +(0,-.5)..node(a1)[very near start, rbul]{}(B2);
\draw (A2)..controls +(0,.5) and +(0,-.5)..node(a2)[very near start, rbul]{}node(c1)[near end, rbul]{}(B1);
\draw (A3)..controls +(0,.5) and +(0,-.5)..node(b1)[near start, rbul]{}(B2);
\draw (A4)..controls +(0,.5) and +(0,-.5)..node(b2)[near start, rbul]{}node(c2)[near end, rbul]{}(B3);
\draw[red](a1)--(a2) (b1)--(b2) (c1)--(c2);
\node(X)at(0,-0.2){};
\end{tikzpicture}
\quad\xmapsto{F_H}\quad
\begin{tikzpicture}[baseline=0.5cm]
\coordinate(A1) at (0,0.05);
\coordinate(A2) at (.5,0.05);
\coordinate(A3) at (1,0.05);
\coordinate(A4) at (1.5,0.05);
\coordinate(B1) at (.25,1.25);
\coordinate(B2) at (.75,1.25);
\coordinate(B3) at (1.25,1.25);
\draw[thick] (A1)..controls +(0,.5) and +(0,-.5)..node(a1)[very near start, rbul]{}(B2);
\draw[thick] (A2)..controls +(0,.5) and +(0,-.5)..node(a2)[very near start, rbul]{}node(c1)[near end, rbul]{}(B1);
\draw[thick] (A3)..controls +(0,.5) and +(0,-.5)..node(b1)[near start, rbul]{}(B2);
\draw[thick] (A4)..controls +(0,.5) and +(0,-.5)..node(b2)[near start, rbul]{}node(c2)[near end, rbul]{}(B3);
\draw[red, decorate,decoration={zigzag,amplitude=1pt, segment length=2pt}](a1)--(a2) (b1)--(b2) (c1)--(c2);
\draw[thick](A1)--+(0,-0.1) (A2)--+(0,-0.1) (A3)--+(0,-0.1) (A4)--+(0,-0.1)  (B1)--+(0,0.1) (B2)--+(0,0.1) (B3)--+(0,0.1);
\end{tikzpicture}
$$
using the notation $\tau=
\begin{tikzpicture}[scale=0.3, baseline=1.5pt]
\draw[thick](0,0)--node(a)[rbul]{} (0,1) (2,0)--node(b)[rbul]{} (2,1);
\draw[red, decorate,decoration={zigzag,amplitude=1pt, segment length=2pt}] (a)--(b);
\end{tikzpicture}$.

Out of $F_H$ we  now build $N_H$ as follows.  On objects it is defined by
\begin{equation}\label{PNHob}
N_H(\bullet^n)=H^{\otimes(n-1)},\quad N_H(1)=1_\C.
\end{equation}
 To define $N_H$ on morphisms, let
 $\iota_n\colon N_H(\bullet^n)\to F_H(\bullet^n)$ be the morphisms
$$\iota_0=\id\colon1_\C\to1_\C,\quad \iota_1=\eta\colon1_\C\to H$$
and for $n>1$
$$\iota_n= 
\begin{tikzpicture}[scale=0.6, baseline=-0.4cm]
\foreach \x  in {0,...,3}
   \coordinate(\x) at (\x,0);
\foreach \x  in {0,1,2}
   \coordinate(a\x) at (\x.5,-1);
\foreach \x  in {0,1,2}
   \draw[thick] (\x)--node[anti]{}(a\x) (a\x)--+(0.5,1) (a\x)--+(0,-0.5);
\foreach \x  in {1,2}
   \draw[thick] (\x)--+(0,0.5);
\draw[thick] (0) to[out=110,in=-90] (-0.05,0.5) (3) to[out=70,in=-90] (3.05,0.5);
\end{tikzpicture}
\quad(n=4).
$$
Then $N_H$ is determined by the requirement that $\iota_n$'s form a natural transformation $N_H\to F_H$. If
$\kappa_n\colon F_H(\bullet^{ n})\to N_H(\bullet^{ n})$ satisfy $\kappa_n\circ \iota_n=\id$ then for any morphism  $\phi\colon\bullet^p\to\bullet^q$ in $\ms{iCom}(R)$ we have
\begin{equation}\label{PNHmor}
N_H(\phi)=\kappa_q\circ F_H(\phi)\circ \iota_p.
\end{equation}
For the morphisms $\kappa_n$ we can take $\kappa_0=\id$ and for $n>0$
$$ 
\kappa_n=\;
\begin{tikzpicture}[baseline=0.3cm]
\draw[thick](0,0)--(0,0.7)node[tbul]{};
\draw[thick](0.3,0)--(0.3,1.05) (0.6,0)--(0.6,1.05) (0.9,0)--(0.9,1.05) (1.2,0)--(1.2,1.05);
\draw[thick](0.3,0.9)--(0.6,0.75) (0.6,0.6)--(0.9,0.45) (0.9,0.3)--(1.2,0.15);
\end{tikzpicture}
\qquad(n=5)
$$
where we use the notation $\epsilon=%
\begin{tikzpicture}[scale=0.2, baseline]
\draw[thick] (0,0)--(0,1)node[tbul]{};
\end{tikzpicture}$.

As promised, we have the following result. 
 
\begin{lem}\label{lem:PH2}
$N_H$, defined on objects by \eqref{PNHob} and on morphisms by \eqref{PNHmor}, is an i-braided lax monoidal functor satisfying the strict nerve condition.
\end{lem}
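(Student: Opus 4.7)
My plan is to build $N_H$ by the prescribed formulas and then verify the required properties in the following order: (i) the retraction identity $\kappa_n \circ \iota_n = \id_{H^{\otimes(n-1)}}$; (ii) a key intertwining lemma asserting that $\iota$ defines a natural transformation $N_H \to F_H$; (iii) functoriality of $N_H$; and finally (iv) the lax monoidal, i-braided, and strict nerve conditions. The architecture parallels the strategy used in \S\ref{sec:pfbr} for Theorem \ref{thm:brnerve}, with the genuinely new ingredient being the handling of the infinitesimal braiding $\tau = r + r^\mathit{op} + t_{H,H}$ coming from $F_H$.

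For (i), stacking the diagrams for $\iota_n$ and $\kappa_n$ produces a pattern of $\Delta$, $S$, and $m$ that collapses via the antipode axiom $m \circ (\id \otimes S) \circ \Delta = \eta \circ \epsilon$ applied at each matched pair of strands, together with the counit axioms eliminating the resulting $\eta \circ \epsilon$ compositions strand by strand. This step uses only the Hopf algebra structure of $H$, independent of the Poisson bracket or the infinitesimal braiding.

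The main technical content is (ii), the intertwining identity
$$F_H(\phi) \circ \iota_p \;=\; \iota_q \circ \kappa_q \circ F_H(\phi) \circ \iota_p \qquad \text{for every } \phi \colon \bullet^p \to \bullet^q \text{ in } \ms{iCom}(R),$$
saying that $F_H(\phi)\circ \iota_p$ already lies in the image of $\iota_q$. Since $\ms{iCom}(R)$ is generated as a symmetric monoidal category by $\uln m$, $\uln \eta$, the symmetry $\sigma_{{\bullet,\bullet}}$, and the chord $t_{{\bullet,\bullet}}$, it suffices to check this on generators and under monoidal products. The cases of $\uln m$, $\uln \eta$ and $\sigma$ reduce to identities already used in the commutative-Hopf-algebra nerve of Proposition \ref{prop:comHopf}. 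The crucial new case is $\phi = t_{{\bullet,\bullet}}$, where $F_H(\phi) = \tau$; the specific decomposition $\tau = r + r^\mathit{op} + t_{H,H}$ (with $r$ built from $p$, $\Delta$, and $S$) is engineered precisely so that the Poisson compatibility \eqref{pHA}, together with the Leibniz rule for $p$ and the modified skew-symmetry \eqref{skewmod}, makes the intertwining close.

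Once (ii) is in hand, functoriality $N_H(\phi_2 \circ \phi_1) = N_H(\phi_2) \circ N_H(\phi_1)$ is immediate: inserting $\iota_q \circ \kappa_q$ between $F_H(\phi_2)$ and $F_H(\phi_1)$ is allowed by the intertwining applied to $\phi_1$, and one then uses $\kappa_n \iota_n = \id$. The coherence morphisms $c^{N_H}$ are forced to be those of \eqref{nervecm}, and compatibility with composition and associativity is a routine diagrammatic verification. The symmetric/i-braided structure on $N_H$ follows from the fact that $\tau$ is $\sigma$-symmetric by construction of $r+r^\mathit{op}$, so the required naturality of $t$ descends through $\iota, \kappa$. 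Finally the strict nerve condition holds tautologically: $N_H(\bullet^n) = H^{\otimes(n-1)}$ on the nose, and the Segal morphism \eqref{Segal} unwinds to a composition of $\iota$'s, $\kappa$'s and $\eta$-insertions which, by (i), collapses to the identity.

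The main obstacle is unquestionably the chord case of (ii): one must verify that $r + r^\mathit{op} + t_{H,H}$ is \emph{the} right combination making the intertwining close, and tracking through the $\Delta, S, p, m, t$ that appear on both sides requires careful diagrammatic bookkeeping, with \eqref{pHA} and the Poisson Jacobi identity \eqref{Jacobi} doing the real work.
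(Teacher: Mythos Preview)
Your outline is correct and would lead to a valid proof, but the paper packages the same content more cleanly. Instead of verifying the intertwining identity $F_H(\phi)\circ\iota_p=\iota_q\circ\kappa_q\circ F_H(\phi)\circ\iota_p$ generator by generator, the paper observes that $\iota_n$ is the \emph{equalizer} of the diagonal $H$-coaction on $F_H(\bullet^n)=H^{\otimes n}$ and of $\eta\otimes\id$; that is, $\iota_n$ realizes $N_H(\bullet^n)$ as the $H$-coinvariants. Since \eqref{tau-comod} (proved in Lemma~\ref{lem:PH1}) says $\tau$ is an $H$-comodule morphism, every $F_H(\phi)$ is an $H$-comodule morphism, so it automatically restricts to coinvariants and functoriality of $N_H$ follows without case analysis. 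The lax monoidal structure then comes from the single identity $\iota_{m+n}\circ c_{\bullet^m,\bullet^n}=\iota_m\otimes\iota_n$, and the i-braided condition again from the equalizer description.

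Two small corrections. First, in your chord case the Jacobi identity \eqref{Jacobi} is not what closes the intertwining: Jacobi was already consumed in Lemma~\ref{lem:PH1} to establish the Drinfeld--Kohno relations and hence the well-definedness of $F_H$. Given $F_H$, what the chord case actually needs is exactly \eqref{tau-comod}, which follows from \eqref{pHA} alone. Second, your justification of the i-braided property (``$\tau$ is $\sigma$-symmetric'') is not the right mechanism. The i-braided condition compares $N_H(t^{\ms{iCom}}_{\bullet^m,\bullet^n})\circ c^{N_H}$ against $c^{N_H}\circ t^\C_{N_H(\bullet^m),N_H(\bullet^n)}$, and what makes the $r$ and $r^{\mathit{op}}$ contributions to $\tau$ drop out is that on coinvariants the coaction is trivial --- again the equalizer viewpoint is the clean way to see this, and symmetry of $\tau$ under $\sigma$ plays no role there.
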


Let us now prove the two lemmas.

\begin{proof}[Proof of Lemma \ref{lem:PH1}]
We need to verify the identities
$$
\begin{tikzpicture}[baseline=0.45cm]
\draw[thick](-0.3,0) to[out=90, in=-120] node[rbul](a){} (0,.8) (0.3,0) to[out=90, in=-60] node[rbul](b){} (0,.8)--(0,1.2);
\draw[red, decorate,decoration={zigzag,amplitude=1pt, segment length=2pt}] (a)--(b);
\end{tikzpicture}=0,
\qquad
\begin{tikzpicture}[baseline=0.45cm]
\draw[thick](-0.2,0) to[out=90, in=-120] (0,.8) (0.2,0) to[out=90, in=-60]  (0,.8)-- node[rbul](a){}(0,1.2) (1,0)--(1,.8)--node[rbul](b){}(1,1.2);
\draw[red, decorate,decoration={zigzag,amplitude=1pt, segment length=2pt}] (a)--(b);
\end{tikzpicture}
\ =\ 
\begin{tikzpicture}[baseline=0.45cm]
\draw[thick](-0.2,0) to[out=90, in=-120] node[rbul](a){} (0,.8) (0.2,0) to[out=90, in=-60]  (0,.8)--(0,1.2) (1,0)--node[rbul](b){}(1,.83)--(1,1.2);
\draw[red, decorate,decoration={zigzag,amplitude=1pt, segment length=2pt}] (a)--(b);
\end{tikzpicture}
\ +\ 
\begin{tikzpicture}[baseline=0.45cm]
\draw[thick](-0.2,0) to[out=90, in=-120] (0,.8) (0.2,0) to[out=90, in=-60] node[rbul](a){}  (0,.8)--(0,1.2) (1,0)--node[rbul](b){}(1,.83)--(1,1.2);
\draw[red, decorate,decoration={zigzag,amplitude=1pt, segment length=2pt}] (a)--(b);
\end{tikzpicture}
$$ 
and that $\tau_{ij}$'s satisfy the Drinfeld-Kohno relations, where $\tau_{ij}\in\on{End}(H^{\otimes n})$ is $\tau$ acting on the $i$'th and $j$'th $H$ in $H^{\otimes n}$.

The first identity follows from \eqref{skewmod}. The second identity, saying that $\tau$ is a biderivation, follows from the fact that both $t_{H,H}$ and $r$ are biderivations. 

To show the Drinfeld-Kohno relations, let us first prove that $\tau$ is a morphism of $H$-comodules, i.e.\ that
\begin{equation}\label{tau-comod}
\begin{tikzpicture}[baseline=0.3cm]
\draw[thick](0,0)--node[rbul,pos=0.2](a){}(0,1);
\draw[thick](0.5,0)--node[rbul,pos=0.2](b){}(0.5,1);
\draw[red, decorate,decoration={zigzag,amplitude=1pt, segment length=2pt}] (a)--(b);
\draw[thick](0,0.35) to[out=120,in=-140](-0.3,0.85)--(-0.3,1);
\draw[thick](0.5,0.35) to[out=140,in=-40](-0.3,0.85);
\end{tikzpicture}
\ =\ %
\begin{tikzpicture}[baseline=0.3cm]
\draw[thick](0,0)--node[rbul,pos=0.8](a){}(0,1);
\draw[thick](0.5,0)--node[rbul,pos=0.8](b){}(0.5,1);
\draw[red, decorate,decoration={zigzag,amplitude=1pt, segment length=2pt}] (a)--(b);
\draw[thick](0,0.35) to[out=120,in=-140](-0.3,0.85)--(-0.3,1);
\draw[thick](0.5,0.35) to[out=140,in=-40](-0.3,0.85);
\end{tikzpicture}
\end{equation}
This follows from
$$
\begin{tikzpicture}[baseline=0.7cm]
\draw[thick] (0,0)--(0,1.5) (0.5,0)--(0.5,1.5);
\draw[thick] (0.5,0.2)--(0,0.5) node[bbul]{};
\draw[thick] (0.5,0.4)--node[anti]{}(0,0.7);
\draw[thick](0,0.9) to[out=120,in=-140](-0.3,1.35)--(-0.3,1.5);
\draw[thick](0.5,0.9) to[out=140,in=-40](-0.3,1.35);
\end{tikzpicture}
\ =
\begin{tikzpicture}[baseline=0.7cm]
\draw[thick] (0,0)--(0,1.5) (0.5,0)--(0.5,1.5);
\draw[thick](0,0.4) to[out=120,in=-140](-0.3,1.2)node[bbul](a){}--(-0.3,1.5);
\draw[thick](0.5,0.4) to[out=140,in=-40](a);
\end{tikzpicture}
\ +
\begin{tikzpicture}[baseline=0.7cm]
\draw[thick] (0,0)--(0,1.5) (0.5,0)--(0.5,1.5);
\draw[thick] (0.5,0.8)--(0,1.1) node[bbul]{};
\draw[thick] (0.5,1)--node[anti]{}(0,1.3);
%\draw[thick](0,0.3) to[out=150,in=-120](-0.3,1.2)--(-0.3,1.5);
%\draw[thick](0.5,0.3) to[out=160,in=-70](-0.3,1.2);
\draw[thick](0,0.4) to[out=120,in=-140](-0.3,0.85)--(-0.3,1.5);
\draw[thick](0.5,0.4) to[out=140,in=-40](-0.3,0.85);
\end{tikzpicture}
\ +
\begin{tikzpicture}[baseline=0.7cm]
\draw[thick] (0,0)--(0,1.5) (0.5,0)--(0.5,1.5);
\draw[thick](0,0.4) to[out=120,in=-140](-0.3,1.2)--(-0.3,1.5);
\draw[thick](0.5,0.4)--(-0.3,1.2);
\draw[red](0,0.65)node[rbul]{}--(0.25,0.65)node[rbul]{};
\end{tikzpicture}
$$
which, in turn, is the identity \eqref{pHA} plugged into the gray rectangle of
$$
\begin{tikzpicture}[scale=0.7]
\draw[thick] (0.2,-0.5)--(0.2,1.5) (0.8,-0.5)--(0.8,1.5);
\fill[black!20!white] (0,0.2)rectangle(1,0.8);
\draw[thick] (0.8,-0.2) to[out=0,in=0] node[anti]{}(0.8,1.2);
\draw[thick] (0.8,-0.3) to[out=0,in=-90](1.4,1.5);
\end{tikzpicture}
$$

Let us introduce the notation
$$
\begin{tikzpicture}[baseline=0.4cm]
\draw[thick] (0,0)--node(a)[tv,pos=0.6]{} (0,1);
\draw[thick] (0.5,0) to[out=90, in=0](a);
\end{tikzpicture}
=
\begin{tikzpicture}[baseline=0.4cm]
\draw[thick] (0,0)--node(p)[bbul,pos=0.5]{} coordinate[pos=0.8](m) (0,1);
\draw[thick] (0.5,0)--(0.5,0.2)--(p) (0.5,0.2) to[out=60,in=-30]node[anti, pos=0.6]{} (m);
\end{tikzpicture}
\qquad\text{so that}\quad
r=-\ %
\begin{tikzpicture}[baseline=0.4cm]
\draw[thick] (0,0)--node(a)[tv,pos=0.6]{} (0,1);
\draw[thick] (0.5,0)--(0.5,1);
\draw[thick] (0.5,0.3) to[out=120, in=0](a);
\end{tikzpicture}
$$
The Jacobi identity \eqref{Jacobi} implies after a straightforward calculation
$$
\begin{tikzpicture}[baseline=0.3cm]
\draw[thick] (0,0)--node(a)[tv, pos=0.35]{} node(b)[tv, pos=0.65]{} (0,1);
\draw[thick] (0.3,0)to[out=90, in=0](a) (0.6,0)to[out=90, in=0](b);
\end{tikzpicture}
\ -\ %
\begin{tikzpicture}[baseline=0.3cm]
\draw[thick] (0,0)--node(a)[tv, pos=0.35]{} node(b)[tv, pos=0.65]{} (0,1);
\draw[thick] (0.3,0)to[out=90, in=0](b) (0.6,0)to[out=90, in=0](a);
\end{tikzpicture}
\ =\ %
\begin{tikzpicture}[baseline=0.3cm]
\draw[thick] (0,0)--node(b)[tv, pos=0.65]{} (0,1);
\node[bbul] (p) at (0.45,0.3) {};
\draw[thick] (0.3,0)to[out=90, in=-135](p) (0.6,0)to[out=90, in=-45](p)%
  (p)to[out=90, in=0] (b);
\end{tikzpicture}
$$
which, in turn, implies
\begin{equation}\label{tau-mod}
\begin{tikzpicture}[baseline=0.4cm]
\draw[thick](0,0)--node[rbul,pos=0.4](a){} node[tv,pos=0.7](tv){}(0,1) (0.3,0)--node[rbul,pos=0.4](b){}(0.3,1);
\draw[red, decorate,decoration={zigzag,amplitude=1pt, segment length=2pt}](a)--(b);
\draw[thick](0.6,0)to[out=90,in=0](tv);
\end{tikzpicture}
\ +\ %
\begin{tikzpicture}[baseline=0.4cm]
\draw[thick](0,0)--node[rbul,pos=0.4](a){}(0,1) (0.3,0)--node[rbul,pos=0.4](b){} node[tv,pos=0.7](tv){}(0.3,1);
\draw[red, decorate,decoration={zigzag,amplitude=1pt, segment length=2pt}](a)--(b);
\draw[thick](0.6,0)to[out=90,in=0](tv);
\end{tikzpicture}
\ =\ %
\begin{tikzpicture}[baseline=0.4cm]
\draw[thick](0,0)--node[rbul,pos=0.7](a){} node[tv,pos=0.4](tv){}(0,1) (0.3,0)--node[rbul,pos=0.7](b){}(0.3,1);
\draw[red, decorate,decoration={zigzag,amplitude=1pt, segment length=2pt}](a)--(b);
\draw[thick](0.6,0)to[out=90,in=0](tv);
\end{tikzpicture}
\ +\ %
\begin{tikzpicture}[baseline=0.4cm]
\draw[thick](0,0)--node[rbul,pos=0.7](a){}(0,1) (0.3,0)--node[rbul,pos=0.7](b){} node[tv,pos=0.4](tv){}(0.3,1);
\draw[red, decorate,decoration={zigzag,amplitude=1pt, segment length=2pt}](a)--(b);
\draw[thick](0.6,0)to[out=90,in=0](tv);
\end{tikzpicture}
\end{equation}
 
We can finally prove the non-trivial Drinfeld-Kohno relation $[\tau_{12}+\tau_{13},\tau_{23}]=0$. It is the sum of the identities
\begin{align*}
[r_{12}+r_{13},\tau_{23}]&=0\\
[r^\textit{op}_{\;12}+r^\textit{op}_{\;13},\tau_{23}]&=0\\
[t_{12}+t_{13},\tau_{23}]&=0.
\end{align*}
The first one is 
$$
\begin{tikzpicture}[baseline=0.7cm]
\draw[thick](0,0)--node[tv,pos=0.8](tv){}(0,1.5);
\draw[thick](0.5,0)--node[rbul,pos=0.8](a){}(0.5,1.5) (1,0)--node[rbul,pos=0.8](b){}(1,1.5);
\coordinate(m) at (0.2,1);
\draw[thick] (0.5,0.4)to[out=160,in=-120](m) (1,0.4)--(m) (m)to[out=90,in=0](tv);
\draw[red, decorate,decoration={zigzag,amplitude=1pt, segment length=2pt}](a)--(b);
\end{tikzpicture}
\ =\ %
\begin{tikzpicture}[baseline=0.7cm]
\draw[thick](0,0)--node[tv,pos=0.8](tv){}(0,1.5);
\draw[thick](0.5,0)--node[rbul,pos=0.15](a){}(0.5,1.5) (1,0)--node[rbul,pos=0.15](b){}(1,1.5);
\coordinate(m) at (0.2,1);
\draw[thick] (0.5,0.4)to[out=160,in=-120](m) (1,0.4)--(m) (m)to[out=90,in=0](tv);
\draw[red, decorate,decoration={zigzag,amplitude=1pt, segment length=2pt}](a)--(b);
\end{tikzpicture}
$$
 and so it follows from \eqref{tau-comod}. The second one follows from \eqref{tau-mod}, and the third one from $t_{12}+t_{13}=t_{H,H\otimes H}$ and from the fact that $t$ is a natural transformation.
\end{proof}

\begin{proof}[Proof of Lemma \ref{lem:PH2}]
Let us make $H$ to a left $H$-comodule via the coaction $\Delta\colon H\to H\otimes H$. Then $F_N(\bullet^n)=H^{\otimes n}$ becomes an $H$-comodule as well, and \eqref{tau-comod} implies that for any morphism $\phi\colon\bullet^p\to\bullet^q$ in $\ms{iCom}(R)$ the morphism $F_H(\phi)$ is a morphism of $H$-comodules.

The morphism $\iota_n$ is the equalizer of 
$$
\begin{tikzcd}[sep=large]
F_H(\bullet^n) \arrow[r, shift left, "\mathrm{coaction}" above] \arrow[r, shift right, "\eta\otimes\id" below] & H\otimes F_H(\bullet^n)
\end{tikzcd}
$$
(i.e.\ it gives us the coinvariants of the coaction). This shows that $N_H$ is indeed a functor.

The fact that $N_H$ is symmetric lax monoidal with the coherence morphisms $c_{\bullet^m,\bullet^n}$ given by  \eqref{nervecm} follows from the identity $\iota_{m+n}\circ c_{\bullet^m,\bullet^n}=\iota_m\otimes\iota_n$, which then means that $\iota_n$'s form a monoidal natural transformation $N_H\to F_H$.
The fact that $N_H$ satisfies the strict nerve condition is evident. Finally, the fact that $N_H$ is i-braided follows easily from seeing $\iota_n$ as the above-mentioned equalizer.
\end{proof}

Once $N_H$ is constructed, the rest of the proof is as in \S\ref{sec:pfbr}: The construction $H\mapsto N_H$ is functorial in $H$.
If $N\colon\ms{iCom}(R)\to\C$ satisfies the strict nerve condition then checking that $H_N\vcentcolon=N({\bullet\bullet})$ (with the operations given in the theorem) is a Poisson Hopf algebra, i.e.\ that \eqref{pHA} is satisfied, is a simple manipulation with diagrams. Again the construction $N\mapsto H_N$ is functorial. 

Let us verify that $\cramped{H_{N_H}}=H$ as Poisson Hopf algebras, i.e.\ that from $N_H$ we get back the Poisson bracket $p$ on $H$. By the definition of $N_H$ and its coherence morphisms we have
$$
\begin{tikzpicture}[xscale=0.6,yscale=0.7,baseline=0.5cm]
\coordinate (diff) at (0.8,0);
\coordinate (dy) at (0,0.3);
\coordinate(A1) at (0,0);
\coordinate(B1) at ($(A1)+(diff)$);
\coordinate(A2) at (2,0);
\coordinate(B2) at ($(A2)+(diff)$);
\coordinate(A3) at ($(B1)+(0,2)$);
\coordinate(B3) at ($(A2)+(0,2)$);
\coordinate(ep) at (0.3,0);
\coordinate(A) at ($(A3)-(dy)$);
\coordinate(B) at ($(B3)-(dy)$);

\fill[black!10] ($(A1)-(ep)$)..controls +(0,1) and +(0,-1)..($(A)-(ep)$)--($(A3)-(ep)$)%
--($(B3)+(ep)$)--($(B)+(ep)$)..controls +(0,-1) and +(0,1)..($(B2)+(ep)$)%
--($(A2)-(ep)$)..controls +(0,.5) and +(0,.5)..($(B1)+(ep)$);

\draw[black!30,thick] ($(A1)-(ep)$)..controls +(0,1) and +(0,-1)..($(A)-(ep)$)--($(A3)-(ep)$)%
($(B3)+(ep)$)--($(B)+(ep)$)..controls +(0,-1) and +(0,1)..($(B2)+(ep)$)%
($(A2)-(ep)$)..controls +(0,.5) and +(0,.5)..($(B1)+(ep)$);

\node(A1) at (A1)[bul]{};
\node(B1) at (B1) [bul]{};
\node(A2) at (A2) [bul]{};
\node(B2) at (B2) [bul]{};
\node(A3) at (A3) [bul]{};
\node(B3) at (B3) [bul]{};

\draw (A2)..controls +(0,1) and +(0,-1)..node(c1)[near start, rbul]{}(A);
\draw (B1)..controls +(0,1) and +(0,-1)..node(c2)[near start, rbul]{}(B);
\draw (A1)..controls +(0,1) and +(0,-1)..(A);
\draw (B2)..controls +(0,1) and +(0,-1)..(B);
\draw (B)--(B3);
\draw(A)--(A3);
\draw[red](c1)--(c2);
\end{tikzpicture}
=
\begin{tikzpicture}[baseline=0.32cm, yscale=0.7, xscale=0.8]
\coordinate(A1) at (0,0);
\coordinate(A2) at (1,0);
\coordinate(B1) at (-0.25,0.6);
\coordinate(B2) at (0.25,0.6);
\coordinate(B3) at (0.75,0.6);
\coordinate(B4) at (1.25,0.6);
\coordinate(C1) at (0,1.5);
\coordinate(C2) at (1,1.5);
\draw[thick](A1)--+(0,-0.25) (A1)to[out=120,in=-120]node[anti,pos=0.2]{}(C1) (A1)to[out=60,in=-140]node[rbul,pos=0.35](a){}(C2);
\draw[thick](A2)--+(0,-0.25) (A2)to[out=120,in=-40]node[anti,pos=0.2]{}node[rbul,pos=0.35](b){}(C1) (A2)to[out=60,in=-60](C2);
\draw[thick](C1)--+(0,0.1)node[tbul]{} (C2)--+(0,0.25);
\draw[red, decorate,decoration={zigzag,amplitude=1pt, segment length=2pt}](a)--(b);
\end{tikzpicture}
=-
\begin{tikzpicture}[baseline=0.5cm,yscale=0.7,xscale=0.8]
\draw[thick](0,0)to[out=90,in=-140]node[tv,sloped,pos=0.4](t){}(1,1.75);
\draw[thick](1,0)--(1,0.25)to[out=140,in=-30]node[anti]{}(t.south);
\draw[thick](1,0.25)to[out=60,in=-60](1,1.75)--(1,2);
\end{tikzpicture}
=
\begin{tikzpicture}[baseline=0.5cm,yscale=0.7,xscale=0.8]
\draw[thick](0,0)to[out=90,in=-140]node[tv,sloped,pos=0.4](t){}(1,1.75);
\draw[thick](1,0)--(1,0.25)to[out=140,in=-30](t.south);
\draw[thick](1,0.25)to[out=60,in=-60](1,1.75)--(1,2);
\end{tikzpicture}
=p.
$$ 

It remains to check that $\cramped{N_{H_N}}=N$. The equality is true for morphisms from $\ms{Com}$ by Proposition \ref{prop:comHopf} and one easily checks the equality for a single chord. These morphisms generate $\ms{iCom}(R)$, and so the two functors are equal.

\section{Semicommutative Hopf algebroids}

The main idea of this paper was that groups (or commutative Hopf algebras) have symmetric nerves, and that deforming this symmetric structure to a braided structure provides their quantization. A natural idea is to extend it to other objects having symmetric nerves. The simplest option is to generalize groups to groupoids. The quantum object that we get is ``quantum groupoids with a classical base''.

\begin{defn}[{Maltsionitis \cite[{$\S$5}]{Mal}}]
A \emph{semicommutative Hopf algebroid} over a commutative ring $R$ is a pair of $R$-algebras $B$ and $H$, with $B$ commutative, with the following additional structure: 
\begin{itemize}
\item two $R$-algebra homomorphisms $\eta_L,\eta_R\colon B\to Z(H)$ (%where $Z(H)$ is 
the center of $H$); this makes $H$ to a $B$-$B$-bimodule (via $b_1\otimes h\otimes b_2\mapsto \eta_L(b_1)\,h\,\eta_R(b_2)$)
\item  $B$-$B$-bimodule morphisms $\Delta\colon H\to H\otimes_B H$ and  $\epsilon\colon H\to B$ making $H$ to a coalgebra in the monoidal category of $B$-$B$-bimodules, which are also $R$-algebra morphisms
\item an invertible $R$-algebra anti-homomorphism $S\colon H\to H$ such that 
\begin{gather*}
S\circ\eta_L=\eta_R\quad S\circ\eta_R=\eta_L\\
m\circ(\id_H\otimes_B S)\circ\Delta=\epsilon\circ\eta_L,\quad m\circ(S\otimes_B\id_H)\circ\Delta=\epsilon\circ\eta_R
\end{gather*}
where $m$ is the product on $H$.
\end{itemize}
\end{defn}

\begin{rem}
For simplicity we gave the definition in the category of $R$-modules. The definition and also the rest of this section can be generalized to BMCs where $H\otimes_B H$ and its iterations are well defined and behaved. The only change that needs to be done is that $\eta_L\colon B\to H$ should be a left-central and $\eta_R\colon B\to H$ a right central morphism of algebras.
\end{rem}

A \emph{commutative Hopf algebroid} corresponds to the case when $H$ is commutative. A
 \emph{semicommutative Poisson Hopf algebroid} is a commutative Hopf algebroid together with a Poisson bracket on $H$ with the properties that the maps $\eta_{L,R}$ send $B$ to the Poisson center of $H$ and $\Delta\colon H\to H\otimes_B H$ and $\epsilon\colon H\to B$ are Poisson algebra morphisms, where the Poisson bracket on $B$ is zero. The \emph{quantization problem} is to deform $m$, $\Delta$, and $S$ (the rest of the structure is not deformed) so that we obtain a semicommutative Hopf algebroid and so that the deformed $m$ is a quantization of the Poisson bracket on $H$.

This problem can again be solved using nerves. If $N\colon \ms{BrCom}\to R\text{-mod}$ or $N\colon \ms{Com}\to R\text{-mod}$ is a braided lax monoidal functor then $N(\bullet)\in R\text{-mod}$ is a commutative algebra and $N({\bullet\bullet})$ is its bimodule via the $N(\bullet)$-actions 
$$
\begin{tikzpicture}[scale=0.35, baseline=0.5cm]
\fill[black!10]%
     (1.5,3)to[out=-90,in=90](0.5,0)--
     (1.5,0)..controls +(0.1,1.5) and +(-0.1,1.5)..(2.5,0)--
     (4.5,0)to[out=90,in=-90](3.5,3)--cycle;
\draw[black!30,thick]%
     (1.5,3)to[out=-90,in=90](0.5,0)
     (1.5,0)..controls +(0.1,1.5) and +(-0.1,1.5)..(2.5,0)
     (4.5,0)to[out=90,in=-90](3.5,3);
\draw (1,0)node[bul]{}to[out=90,in=-90]+(1,3)node[bul]{};
\draw (3,0)node[bul]{}to[out=90,in=-90]+(-1,3)node[bul]{};
\draw (4,0)node[bul]{}to[out=90,in=-90]+(-1,3)node[bul]{};
\end{tikzpicture}
\qquad\text{and}\qquad
\begin{tikzpicture}[scale=0.35, baseline=0.5cm]
\fill[black!10]%
     (0.5,3)to[out=-90,in=90](-0.5,0)--
     (1.5,0)..controls +(0.1,1.5) and +(-0.1,1.5)..(2.5,0)--
     (3.5,0)to[out=90,in=-90](2.5,3)--cycle;
\draw[black!30,thick]%
     (0.5,3)to[out=-90,in=90](-0.5,0)
     (1.5,0)..controls +(0.1,1.5) and +(-0.1,1.5)..(2.5,0)
     (3.5,0)to[out=90,in=-90](2.5,3);
\draw (0,0)node[bul]{}to[out=90,in=-90]+(1,3)node[bul]{};
\draw (1,0)node[bul]{}to[out=90,in=-90]+(1,3)node[bul]{};
\draw (3,0)node[bul]{}to[out=90,in=-90]+(-1,3)node[bul]{};
\end{tikzpicture}
$$
The map \eqref{Segal} is easily seen to factor through 
\begin{equation}\label{Seg2}
\underbrace{N({\bullet\bullet})\otimes_{N(\bullet)}N({\bullet\bullet})\otimes_{N(\bullet)}\dots \otimes_{N(\bullet)} N({\bullet\bullet})}_{n-1} \to N(\bullet^{n}).
\end{equation}
We shall say that $N$ \emph{satisfies the Segal condition} \cite{Seg} (or \emph{groupoid nerve condition}) if \eqref{Seg2} is an isomorphism for every $n$ and if $c^N_1\colon R\to N(1)$ is an isomorphism.

\begin{thm}\label{thm:nervoid}%[Nerves of semicommutative Hopf algebroids]
The category of semicommutative Hopf algebroids over $R$  with invertible antipodes is equivalent to the category of braided lax monoidal functors
$$N\colon \ms{BrCom}\to R\text{-}\mathrm{mod}$$
satisfying the Segal condition. The same is true for commutative Hopf algebroids and symmetric lax monoidal functors 
$$N\colon \ms{Com}\to R\text{-}\mathrm{mod}.$$

The Hopf algebroid corresponding to $N$ is 
$$H=N({\bullet\bullet})\qquad B=N(\bullet).$$
 The algebra structure on $H$ and $B$ comes from the algebra structure of ${\bullet\bullet}$ and $\bullet$.
The remaining operations are given by
$$
\Delta=\begin{tikzpicture}[xscale=0.3,yscale=0.6,baseline=0.5cm]
\coordinate (bor) at (0.8,0);
\coordinate(1) at (-1,0);
\coordinate(2) at (1,0);
\coordinate(a) at (-2,2);
\coordinate(b) at (0,2);
\coordinate(c) at (2,2);
\fill[black!10] ($(1)-(bor)$)to[out=90,in=-90]($(a)-(bor)$)--($(c)+(bor)$)to[out=-90,in=90]($(2)+(bor)$);
\draw[black!30,thick]($(1)-(bor)$)to[out=90,in=-90]($(a)-(bor)$) ($(c)+(bor)$)to[out=-90,in=90]($(2)+(bor)$);
\node(1) at (1)[bul]{};
\node(2) at (2)[bul]{};
\node(a) at (a)[bul]{};
\node(b) at (b)[bul]{};
\node(c) at (c)[bul]{};
\draw (1)to[out=90,in=-90](a) (2)to[out=90,in=-90](c);
\end{tikzpicture}\qquad
\epsilon=\begin{tikzpicture}[scale=0.6,baseline=0.5cm]
\fill[black!10] (-0.5,0)to[out=90,in=-90](0,2)--(1,2)to[out=-90,in=90](1.5,0);
\draw[black!30,thick](-0.5,0)to[out=90,in=-90](0,2)  (1,2)to[out=-90,in=90](1.5,0);
\node(1) [bul] at (0,0){};
\node(2) [bul] at (1,0){};
\node(3) [bul] at (0.5,2){};
\draw (1) to[out=90,in=-90](3) (2) to[out=90,in=-90](3);
\end{tikzpicture}\qquad
S=\begin{tikzpicture}[scale=0.6,baseline=0.5cm]
\fill[black!10](-0.3,0)--(-0.3,2)--(1.3,2)--(1.3,0);
\draw[black!30,thick](-0.3,0)--(-0.3,2) (1.3,2)--(1.3,0);
\node(1) at (0,0)[bul]{};
\node(2) at (1,0)[bul]{};
\node(a) at (0,2)[bul]{};
\node(b) at (1,2)[bul]{};
\draw (2)..controls +(0,1) and +(0,-1)..(a);
\draw[line width=1ex,black!10] (1)..controls +(0,1) and +(0,-1)..(b);
\draw (1)..controls +(0,1) and +(0,-1)..(b);
\end{tikzpicture}
$$
$$\eta_L=\begin{tikzpicture}[scale=0.6,baseline=0.5cm]
\fill[black!10] (-0.5,0)to[out=90,in=-90](-1,2)--(1,2)to[out=-90,in=90](0.5,0);
\draw[black!30,thick](-0.5,0)to[out=90,in=-90](-1,2) (1,2)to[out=-90,in=90](0.5,0);
\node(1) [bul] at (0,0){};
\node(2) [bul] at (-0.5,2){};
\node(3) [bul] at (0.5,2){};
\draw (1)to[out=90,in=-90](2);
\end{tikzpicture}\qquad
\eta_R=\begin{tikzpicture}[scale=0.6,baseline=0.5cm]
\fill[black!10] (-0.5,0)to[out=90,in=-90](-1,2)--(1,2)to[out=-90,in=90](0.5,0);
\draw[black!30,thick](-0.5,0)to[out=90,in=-90](-1,2) (1,2)to[out=-90,in=90](0.5,0);
\node(1) [bul] at (0,0){};
\node(2) [bul] at (-0.5,2){};
\node(3) [bul] at (0.5,2){};
\draw (1)to[out=90,in=-90](3);
\end{tikzpicture}$$
where we implicitly use the isomorphism $N(\bullet^3)\cong N({\bullet\bullet})\otimes_{N(\bullet)}N({\bullet\bullet})$ given by the Segal condition. 

Finally, semicommutative Poisson Hopf algebroids over $R$ are equivalent to i-braided lax monoidal functors
$$N\colon \ms{iCom}(R)\to R\text{-}\mathrm{mod}$$
satisfying the Segal condition. The Poisson bracket on $H$ comes from the Poisson bracket on ${\bullet\bullet}$.
\end{thm}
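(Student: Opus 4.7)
The plan is to adapt the proofs of Theorems \ref{thm:brnerve} and \ref{thm:inerve} (\S\ref{sec:pfbr}, \S\ref{sec:pfi}), with two new features: the appearance of a nontrivial base algebra $B = N(\bullet)$, and the replacement of the nerve condition by the Segal condition. I will prove all three equivalences simultaneously, as in each case the argument factors through the same underlying construction and only differs by which coherence data (symmetric, braided, or i-braided) is carried along.

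First I would construct the nerve $N_H$ of a (possibly semicommutative or Poisson) Hopf algebroid. On objects set
$$N_H(\bullet^0) = R,\quad N_H(\bullet) = B,\quad N_H(\bullet^n) = H^{\otimes_B(n-1)}\ (n\geq 2),$$
where the bimodule structure on $H$ is given by $\eta_L$ on the left and $\eta_R$ on the right. On morphisms, the algorithm is the one used for the usual nerve of a Hopf algebra (Section 3.3 / \S\ref{sec:pfbr}), with the following modification: whenever a strand of the target picture terminates at the far left or far right (so no bullet lies beyond it), one must close it off using $\eta_L$ or $\eta_R$ as dictated by side, and whenever a strand collapses to a point we apply $\epsilon\colon H\to B$ instead of the scalar counit. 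The coherence morphism $N_H(\bullet^m)\otimes_R N_H(\bullet^n)\to N_H(\bullet^{m+n})$ is obtained by inserting $B=\eta_L(B)=\eta_R(B)$ in the middle slot and passing to $\otimes_B$; this is well defined precisely because $\eta_L$ and $\eta_R$ land in the center. In the Poisson case one uses the two-step procedure of \S\ref{sec:pfi}: build an auxiliary functor $F_H\colon \ms{iCom}(R)\to R\text{-mod}$ with $F_H(\bullet^n)=H^{\otimes_R n}$ by the same formulas as in Lemma \ref{lem:PH1} (centrality of $B$ makes the formulas well defined on $H$), then take $B$-coinvariants along the iterated coproduct, mirroring Lemma \ref{lem:PH2}.

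Conversely, given $N\colon \ms{BrCom}\to R\text{-mod}$ (or on $\ms{Com}$, or on $\ms{iCom}(R)$) satisfying the Segal condition, set $B=N(\bullet)$ and $H=N({\bullet\bullet})$. The $R$-algebra structure on $H$ comes from the algebra structure of ${\bullet\bullet}=\bullet\otimes\bullet$ in the source prop, and the commutative $R$-algebra structure on $B$ from $\bullet$. The two morphisms $\bullet\to{\bullet\bullet}$ in $\ms{BrCom}$ given by $\id\otimes\uln\eta$ and $\uln\eta\otimes\id$ produce $\eta_L,\eta_R\colon B\to H$, and the commutation relations in $\ms{BrCom}$ force their images into the center. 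The Segal isomorphism $N(\bullet^3)\cong H\otimes_B H$ lets us read off $\Delta$, $\epsilon$, and $S$ from the pictures displayed in the statement; the Hopf algebroid axioms are then a diagrammatic manipulation identical to the one in \S\ref{sec:pfbr}, and in the Poisson case the Poisson bracket and its compatibility with $\Delta$ are obtained exactly as in the closing verification of \S\ref{sec:pfi}. Functoriality in $N$ and in $H$ is immediate from the constructions, and the two assignments are mutually inverse for the same reason as in the Hopf (algebra) case: equality $H_{N_H}=H$ is tautological from how $N_H(\bullet\bullet)$ is defined, and $N_{H_N}=N$ is verified on the few generating morphisms of each source prop.

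The main obstacle is the bookkeeping around $\otimes_B$ versus $\otimes_R$. Concretely: one must check that the coherence morphisms of $N_H$ really factor through the balanced tensor products (i.e., are $B$-middle-linear), that the image of $\Delta\colon H\to H\otimes_R H$ obtained from $N$ actually lies in $H\otimes_B H$, and that the strict-Segal condition (analogous to the strict nerve condition in \S\ref{sec:pfbr}, \S\ref{sec:pfi}) can always be arranged. All of these reduce to the identities between the generating morphisms $\bullet\to{\bullet\bullet}$, ${\bullet\bullet}\to\bullet$, $1\to\bullet$ that already hold in $\ms{Com}$, $\ms{BrCom}$, and $\ms{iCom}(R)$, together with centrality of $\eta_{L,R}$; but writing them out carefully is where the bulk of the verification work lies.
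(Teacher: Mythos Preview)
Your proposal is correct and takes essentially the same approach as the paper, which simply states that the theorem ``can be proven by a suitable modification of the proofs in \S\ref{sec:proofs}'' and leaves the details to the reader. Your sketch supplies exactly those details: the replacement of $H^{\otimes(n-1)}$ by $H^{\otimes_B(n-1)}$, the bookkeeping for $\eta_L,\eta_R$, and the reuse of the two-step $F_H\to N_H$ construction in the Poisson case; the only minor wrinkle is that $\Delta$ lands in $H\otimes_B H$ directly via the Segal isomorphism (the factoring of the coherence map through the balanced tensor product is noted just before the theorem statement), so there is no separate ``image lies in $H\otimes_B H$'' check to perform.
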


The theorem can be proven by a suitable modification of the proofs in \S\ref{sec:proofs}; we leave the details to the reader. Let us just mention a few hints. The algebra $B=N(\bullet)$ must be commutative since $\bullet\in\ms{BrCom}$ is commutative and $N$ is braided lax monoidal. Similarly one checks that $\eta_L$ and $\eta_R$ send $B=N(\bullet)$ to the center of $H=N({\bullet\bullet})$. The proof of Theorem \ref{thm:brnerve} (\S\ref{sec:pfbr}) generalizes without any problems, if we set
$$N(\bullet^n)=\underbrace{H\otimes_B H\otimes_B\dots \otimes_B H}_{n-1}$$
where $\otimes_B$ is given in terms of the $B$-bimodule structure of $H$ given by $\eta_L$ and $\eta_R$. Likewise, the proof of Theorem \ref{thm:inerve} (\S\ref{sec:pfi}) goes through by putting
$$F(\bullet^n)=
\Bigl({\bigotimes}_B\Bigr)_{i=1}^n H$$
where this time $\otimes_B$ is defined using $\eta_L$ only.
\medskip 

If $\Q\subset R$, we get from Theorem \ref{thm:nervoid} immediately a solution of the quantization problem:  If $N\colon \ms{iCom}(R)\to R\text{-}\mathrm{mod}$ is the nerve of a semicommutative Poisson Hopf algebroid then the composition of braided lax monoidal functors
$$\ms{BrCom}\xrightarrow{U_\Phi}\ms{iCom}(\Q)_\hbar^\Phi\subset\ms{iCom}(R)_\hbar^\Phi\xrightarrow{N_\hbar} R\text{-}\mathrm{mod}_\hbar$$
satisfies the Segal condition and thus gives us a semicommutative Hopf algebroid.

An interesting question remains whether our method can be applied for quantization of other objects with symmetric nerves (higher groups or groupoids).


\begin{thebibliography}{99}
\bibitem{AKM} A. Alekseev, Y. Kosmann-Schwarzbach, E. Meinrenken, Quasi-Poisson manifolds, Canad. J. Math. 54 (2002), no. 1, 3--29.
\bibitem{DBN} D. Bar-Natan, On Associators and the Grothendieck-Teichmuller Group I. Selecta Mathematica 4 (1998), 183--212.
\bibitem{Cartier} P. Cartier, Construction combinatoire des invariants de {Vassiliev}-{Kontsevich} des nœuds, Les rencontres physiciens-mathématiciens de Strasbourg 45 (1993), 1--10.
\bibitem{drqg} V. Drinfeľd, Quantum groups, Proceedings of the International Congress of Mathematicians 1986, Vol. 1, 798--820, AMS 1987
\bibitem{drass} V. Drinfeľd, On quasitriangular quasi-Hopf algebras and on a group that is closely connected with $\on{Gal}(\bar{\mathbb Q}/\mathbb Q)$,  Algebra i Analiz 2 (1990), no. 4, 149--181.
\bibitem{Enr} B. Enriquez, A cohomological construction of quantization functors of Lie bialgebras. Adv. Math. 197 (2005), no. 2, 430--479.
\bibitem{EE} B. Enriquez, P. Etingof, 
On the invertibility of quantization functors.
J. Algebra 289 (2005), no. 2, 321--345.
\bibitem{EK} P. Etingof, D. Kazhdan, Quantization of Lie bialgebras I. Selecta Mathematica 2 (1996), 1--41.
\bibitem{EK2} P. Etingof, D. Kazhdan, Quantization of Poisson algebraic groups and Poisson homogeneous spaces. Symétries quantiques (Les Houches, 1995), 935--946, North-Holland, Amsterdam, 1998.
\bibitem{FieLo} Z. Fiedorowicz, J-L. Loday, Crossed Simplicial Groups and their Associated Homology, Transactions of the American Mathematical Society 326 (1991), no. 1, 57--87.
\bibitem{KS} M. Kapranov, V. Schechtman, Shuffle algebras and perverse sheaves, arXiv:1904.09325.
\bibitem{Lav} T.G. Lavers. The theory of vines. Communications in Algebra (1997), 5:1257--1284.
\bibitem{McL} S. Mac Lane. Categories for the Working Mathematician. Graduate Texts in Mathematics 5 (second ed.). Springer. ISBN 0-387-98403-8.
\bibitem{Maj}S. Majid. Algebras and Hopf algebras in braided categories. In: Advances in Hopf algebras, Lecture Notes in Pure and Appl. Math. 158, 1994.
\bibitem{Mal} G. Maltsiniotis, Groupoïdes quantiques, Comptes R
Rendus Acad. Sci. Paris 314, pp. 249-252 (1992)
\bibitem{Seg} G. Segal, Classifying spaces and spectral sequences, Inst.
Hautes Études Sci. Publ. Math., vol. 34, pp. 105–112 (1968)
\bibitem{ja} P. Ševera, Quantization of Lie bialgebras revisited. Selecta Mathematica 22 (2016),  1563--1581.
\bibitem{SV}  P. Ševera, F. Valach, Lie groups in quasi-Poisson geometry and braided Hopf algebras. Doc. Math. 22 (2017), 953–972
\bibitem{Tam} D. Tamarkin,  Quantization of Lie bialgebras via the formality of the operad of little disks. Geom. Funct. Anal. 17 (2007), no. 2, 537--604.


\end{thebibliography}
\end{document}